%%%%%
%%
%%    13 Jan 25 DLP
%%
%%%%%
%
\documentclass[12pt,a4paper,reqno]{amsart}

\title{On characteristic classes of vector bundles\\[5pt] over quantum spheres}
\date{January 2024}
\author[F.~D'Andrea, G.~Landi, C.~Pagani]{Francesco D'Andrea, Giovanni Landi, Chiara Pagani}

\address[F.~D'Andrea]
{Universit\`a di Napoli Federico II and INFN--Napoli, Napoli, Italy}
\email{francesco.dandrea@unina.it}

\address[G.~Landi]
{Universit\`a di Trieste and INFN--Trieste, Trieste, Italy}
\email{landi@units.it}

\address[C.~Pagani]
{Universit\`a di Napoli Federico II and INFN--Napoli, Napoli, Italy}
\email{chiara.pagani@unina.it}

\usepackage[hidelinks,pdfstartview=FitH]{hyperref} %backref=page
\usepackage{fullpage}
\usepackage{amssymb,interval,mathrsfs,enumitem}
\usepackage{tikz}
\usepackage[T1]{fontenc}
\usepackage[english]{babel}
\setenumerate{label=\textup{(\roman*)},itemsep=2pt,topsep=2pt,leftmargin=2.5em}
\urlstyle{same}
\usetikzlibrary{arrows,cd,arrows.meta,calc}

\usepackage{mathtools}
\mathtoolsset{showonlyrefs}

\linespread{1.15}
\allowdisplaybreaks[4]
\pagestyle{plain}
\setlength{\footskip}{20pt}

\newcommand{\doi}[1]{\url{https://doi.org/#1}}
\newcommand{\isbn}[1]{\url{https://isbnsearch.org/isbn/#1}}
\newcommand{\arxiv}[1]{\href{https://arxiv.org/abs/#1}{preprint arXiv:#1}}
\newcommand{\web}[1]{\url{#1}}

\numberwithin{equation}{section}

\newtheorem{prop}{Proposition}[section]
\newtheorem{lemma}[prop]{Lemma}
\newtheorem{cor}[prop]{Corollary}

\theoremstyle{remark}
\newtheorem{ex}[prop]{Example}
\newtheorem{rem}[prop]{Remark}

\usepackage{etoolbox}
\AtEndEnvironment{rem}{\hfill\ensuremath{\square}}

\newcommand{\dott}{{\scriptstyle{\bullet}}\,}

\newcommand{\N}{\mathbb{N}}
\newcommand{\Z}{\mathbb{Z}}
\newcommand{\R}{\mathbb{R}}
\newcommand{\C}{\mathbb{C}}
\newcommand{\ket}[1]{\left|#1\right>}
\newcommand{\tr}{\mathrm{Tr}}
\newcommand{\id}{\mathrm{Id}}
\newcommand{\mat}[2]{\bigg(\!\begin{array}{{#1}{#1}}#2\end{array}\!\bigg)}

\newcommand{\ot}{\otimes}
\newcommand{\uu}[1]{{U_{\scriptscriptstyle{(#1)}}}}

\newcommand{\pp}[1]{{P_{\scriptscriptstyle{(#1)}}}}
\renewcommand{\tt}[1]{{T_{\scriptscriptstyle{(#1)}}}}

\newcommand{\ww}[1]{{W_{\scriptscriptstyle{(#1)}}}}
\newcommand{\qq}[1]{{Q_{\scriptscriptstyle{(#1)}}}}

\newcommand{\half}{\scriptscriptstyle{\frac{1}{2}}}
\newcommand{\qbin}[2]{{\genfrac{[}{]}{0pt}{}{#1}{#2}}}
 
\newcommand{\qqnum}[1]{[\![#1]\!]} 
\newcommand{\beq}{\begin{equation}}
\newcommand{\eeq}{\end{equation}}

% ------ Indice ---------------------------------------------------

\makeatletter
\def\@tocline#1#2#3#4#5#6#7{\relax
  \ifnum #1>\c@tocdepth % then omit
  \else
    \par \addpenalty\@secpenalty\addvspace{#2}%
    \begingroup \hyphenpenalty\@M
    \@ifempty{#4}{%
      \@tempdima\csname r@tocindent\number#1\endcsname\relax
    }{%
      \@tempdima#4\relax
    }%
    \parindent\z@ \leftskip#3\relax \advance\leftskip\@tempdima\relax
    \rightskip\@pnumwidth plus4em \parfillskip-\@pnumwidth
    #5\leavevmode\hskip-\@tempdima
      \ifcase #1
       \or\or \hskip 1em \or \hskip 2em \else \hskip 3em \fi%
      #6 \hskip 0.5em \nobreak\relax
    \dotfill\hbox to\@pnumwidth{\@tocpagenum{#7}}\par
    \nobreak
    \endgroup
  \fi}
\makeatother

\begin{document}

\subjclass[2020]{Primary: 20G42; Secondary: 46L89; 58B34.}

\keywords{Quantum spaces, K-theory, principal and associated vector bundles, instantons on $q$-spheres, Euler class, Chern characters.}

\begin{abstract}
We study the quantization of spaces whose K-theory in the classical limit is the ring of dual numbers $\Z[t]/(t^2)$. For a compact Hausdorff space we recall necessary and sufficient conditions for this to hold. For a compact quantum space, we give sufficient conditions that guarantee there is a morphism of abelian groups
$K_0 \to \Z[t]/(t^2)$ compatible with the tensor product of bimodules.
Applications include the standard Podle\'s sphere $S^2_q$ and a quantum $4$-sphere $S^4_q$ coming from quantum symplectic groups. For the latter, the K-theory is generated by the Euler class of the instanton bundle.
We give explicit formulas for the projections of vector bundles on $S^4_q$ associated to the principal $SU_q(2)$-bundle $S^7_q \to S^4_q$ via irreducible corepresentations of $SU_q(2)$, and compute their characteristic classes.
\end{abstract}

\maketitle

\begin{center}
\begin{minipage}{0.8\textwidth}
\parskip=0pt\small\tableofcontents
\end{minipage}
\end{center}

\parskip = 1 ex

\bigskip

\section{Introduction}\label{sec:1}
There are many well-known techniques to compute the K-theory groups of a topological space or of a C*-algebra. To mention two main examples, one has the six-term exact sequence associated to an extension of C*-algebras, and the six-term exact sequence associated to a one-surjective pullback diagram of C*-algebras (or one-injective pushout diagram of topological spaces). The latter is usually called \emph{Mayer-Vietoris sequence} in K-theory, by analogy with the celebrated long exact sequence in algebraic topology \cite{Hil86}.
One drawback of these exact sequences is that the connecting homomorphism is only a morphism of abelian groups, even for topological spaces where the K-theory is actually a (commutative) ring.

Computing explicitly the ring structure of $K^*(X)$, for a topological space $X$, is usually harder (where ``explicitly'' means, for example, finding a presentation by generators and relations). Known examples include complex projective spaces and even-dimensional spheres, both of which are computed with suitable ad hoc techniques. For projective spaces, Atiyah and Todd \cite{AT60} proved that $K^0(\C P^n)$ is the ring $\Z[t]/(t^{n+1})$ of truncated polynomials in $t:=1-[L_1]$, $L_1$ being the canonical line bundle. The K-ring of an even-dimensional sphere is obtained as a corollary of Bott's periodicity theorem \cite[Thm.~5.5]{Hus94}, and given by the ring of dual numbers $K^0(S^{2n})\cong\Z[t]/(t^2)$ (in all these cases, $K^1$ is zero).

A more general approach, when $X\cong G/H$ is a homogeneous space, is given by Hodgkin \cite{Hod75}.
If $G$ is a compact Lie group and $H$ a closed subgroup, there is a homomorphism between representation rings $R(G)\to R(H)$, given by the restriction of representations, which is injective if $H$ has maximal rank. The construction of vector bundles associated to the principal $H$-bundle $G\to X$ gives a ring homomorphism $R(H)\to K^0(X)$. Since 
bundles associated to representations of $G$ are trivial, the ring $R(G)$ acts on $K^0(X)$ simply by multiplication by dimension, and we have a natural ring homomorphism
\begin{equation}\label{eq:Hodgkin}
R(H)\otimes_{R(G)}\Z\to K^0(X) .
\end{equation}
Hodgkin proved that, if $H$ has maximal rank and $\pi_1(G)$ is torsion-free, then
\eqref{eq:Hodgkin} is an isomorphism \cite[Cor.~page 71]{Hod75} (under the same assumptions, $K^1(X)=0$).
Hodgkin's theorem applies to both   $\C P^n\cong SU(n+1)/U(n)$ and  $S^{2n}\cong Spin(2n+1)/Spin(2n)$, for all $n\geq 1$.

Given the above discussion, already in the classical case of compact Hausdorff spaces, it is interesting to have simple conditions that, from the knowledge of $K^0(X)$ as an abelian group, allow one to determine its ring structure. It is in this spirit that, in the first part of the paper, we prove the following simple result.

\begingroup
\renewcommand\theprop{\protect{\ref{prop:next}}}
\begin{prop}
Let $X$ be a $2n$-dimensional oriented closed manifold, $n\geq 1$. 
Assume that $K^0(X)\cong\Z^2$ as an abelian group, and that there is a complex vector bundle $E\to X$ with rank $r$ and Chern number $c=\pm 1$. Then,
$K^0(X)=\Z[t]/(t^2)$ with $t=r-[E]$.
\end{prop}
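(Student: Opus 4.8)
The plan is to read off the ring structure from the Chern character $\mathrm{ch}\colon K^0(X)\otimes\Q\xrightarrow{\ \sim\ }H^{\mathrm{even}}(X;\Q)$, which is a ring isomorphism for any closed manifold. Since $K^0(X)\cong\Z^2$, the target is two–dimensional; moreover $X$ is necessarily connected, since each component of an oriented closed $2n$–manifold contributes a copy of $\Q$ to both $H^0$ and $H^{2n}$, so more than one component would make the even rank exceed $2$. Hence $H^0(X;\Q)=H^{2n}(X;\Q)=\Q$ and these exhaust the even cohomology: $H^{2k}(X;\Q)=0$ for all $0<k<n$.

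First I would compute the image of $t$. Writing $1$ for the trivial line bundle, $\mathrm{ch}(1)=1$, while $\mathrm{ch}([E])=r+\sum_{k\ge1}\mathrm{ch}_k([E])$; every term with $0<k<n$ lies in $H^{2k}(X;\Q)=0$, and by Newton's identities the top one is $\mathrm{ch}_n([E])=\tfrac{(-1)^{n-1}}{(n-1)!}\,c_n(E)$, all lower Chern–class products again vanishing rationally. Therefore $\mathrm{ch}(t)=\mathrm{ch}(r-[E])=\tfrac{(-1)^{n}}{(n-1)!}\,c_n(E)$ is concentrated in top degree, and $\inner{\mathrm{ch}(t),[X]}=\tfrac{(-1)^nc}{(n-1)!}\neq0$ because $c=\pm1$; in particular $t$ has infinite order in $K^0(X)$.

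The relation $t^2=0$ is then immediate: $\mathrm{ch}(t^2)=\mathrm{ch}(t)^2\in H^{4n}(X;\Q)=0$ since $4n>2n$ for $n\ge1$, and as $K^0(X)\cong\Z^2$ is torsion–free the rational injectivity of $\mathrm{ch}$ forces $t^2=0$. Combined with the previous step, the subring of $K^0(X)$ generated by $t$ is thus $\Z\cdot1\oplus\Z\cdot t\cong\Z[t]/(t^2)$.

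What remains—and is the main obstacle—is that $1$ and $t$ actually \emph{span} $K^0(X)$, i.e. that $t$ generates $\tilde K^0(X)$. The rank map splits off $\Z\cdot1$ and leaves $\tilde K^0(X)\cong\Z$, in which $t$ is a nonzero element; the point is to exclude $t$ being a proper multiple of a generator, and this is exactly where the value $c=\pm1$ (rather than a larger integer) enters. Everything up to $t^2=0$ was a direct Chern–character computation, but upgrading ``$t$ is non-torsion'' to ``$t$ is a generator'' needs genuine integrality, since rationally one only sees that $\mathrm{ch}(t)$ spans $H^{2n}(X;\Q)$ and the denominator $1/(n-1)!$ signals that integrality is delicate. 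I would argue it through the integral refinement of $\mathrm{ch}$: the top Chern class gives a homomorphism $\tilde K^0(X)\to H^{2n}(X;\Z)\cong\Z$, and because the lower Chern classes of $E$ are rationally trivial the virtual bundle $t$ has top Chern class $(-1)^nc_n(E)=\pm\sigma$, a generator of $H^{2n}(X;\Z)$. Identifying this map with the edge homomorphism of the Atiyah–Hirzebruch spectral sequence, it is an isomorphism once the intermediate filtration quotients—finite, being subquotients of the rationally trivial $H^{2k}(X;\Z)$ for $0<k<n$—are seen to vanish; this is automatic for $n=1$ and holds in the applications, where $n\le2$, $(n-1)!=1$, and $\mathrm{ch}(t)=\pm\sigma$ is already the integral generator. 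Granting this, $t$ generates $\tilde K^0(X)$ and $K^0(X)=\Z[t]/(t^2)$.
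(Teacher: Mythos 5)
Your first two paragraphs are sound: the Chern character argument gives connectedness, the vanishing of the intermediate rational cohomology, and $t^2=0$ exactly as in the paper (which gets the same conclusion by first establishing the abstract ring isomorphism in Proposition \ref{prop:before}). The problem is the step you yourself flag as the main obstacle, namely that $t$ generates $\tilde K^0(X)$: you do not prove it. You write ``Granting this'' and verify the required vanishing of the intermediate Atiyah--Hirzebruch filtration quotients only ``in the applications, where $n\le 2$'', whereas the proposition is asserted for every $n\ge 1$. The spectral-sequence sketch does not close on its own: knowing that $\tilde K^0(X)\cong\Z$ and that $H^{2k}(X;\Z)$ is finite for $0<k<n$ does not force the successive filtration quotients to vanish (a filtration of $\Z$ by subgroups isomorphic to $\Z$ can perfectly well have nontrivial finite quotients, each a subquotient of a finite $H^{2k}(X;\Z)$). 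So the argument has a genuine gap at precisely the point where the hypothesis $c=\pm1$ must do its work.

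The paper closes this gap with a much more elementary device, and the comparison is instructive. First, the paper's ``Chern number'' is $\int_X\mathrm{ch}_n(E)$, the top Chern \emph{character} paired with $[X]$, not $\int_Xc_n(E)$; your $1/(n-1)!$ integrality worry is an artifact of substituting the latter for the former. With the paper's definition, the rank and the Chern number are two $\Z$-valued homomorphisms $\varphi^1,\varphi^2\in K^0(X)^\vee$, and the matrix of their values on the pair $(1,[E])$ is $\left(\begin{smallmatrix}1&0\\ r&c\end{smallmatrix}\right)$, invertible over $\Z$ because $c=\pm1$. By the elementary Lemma \ref{lemma:1} on free $\Z$-modules of rank $2$, this already forces $(1,[E])$ to be a basis of $K^0(X)$ --- no integral cohomology, no Atiyah--Hirzebruch spectral sequence, and no integrality of $c_n$ are needed. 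Once $(1,[E])$ is a basis, the identification $t=r-[E]$ follows either from your computation of $\mathrm{ch}(t)$ together with $t^2=0$, or, as in the paper, by comparing $(1,[E])$ with the abstract basis $(1,t)$ supplied by Proposition \ref{prop:before} and applying the rank functional to the relation $t^2=0$. If you want to keep your route, the missing ingredient is exactly this duality argument; without it the statement ``$t$ is a generator, not a proper multiple of one'' remains unproved.
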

\endgroup

We also observe (cf.~Proposition \ref{prop:before}) that the assumptions of the previous proposition are equivalent to $X$ being a rational homology sphere with torsion-free K-groups.

Examples include $S^2$ and $S^4$. On $S^2$, $E$ is the vector bundle associated to the 1st Hopf fibration $S^3\to S^2$ and to the fundamental representation of $U(1)$. On $S^4$, $E$ is the vector bundle associated to the 2nd Hopf fibration $S^7\to S^4$ and to the fundamental representation of $SU(2)$. In physical language, the K-theory ring of $S^2$ resp.~$S^4$ is generated by the basic Dirac monopole resp.~instanton bundle 
of the Yang-Mills theory. In both cases, $t=r-[E]$ is the Euler characteristic (in K-theory) of the vector bundle $E$.
The same  computation goes through for any even-dimensional sphere (see Proposition \ref{prop:sfereclassiche}).

Next, we want to find a result for compact quantum spaces which is as close as possible to Proposition \ref{prop:next}.
The K-theory of a C*-algebra has no canonical ring structure but,  by expanding the binomial  we can rephrase the relation $t^2=(r-[E])^2=0$ as
\begin{equation}\label{eq:makesense}
r^2-2r[E]+[E\otimes E]=0 .
\end{equation}
Phrased like this, the result can now be dualized and then generalized, in a suitable way, to noncommutative spaces.

In the quantum world, a manifold is replaced by an associative unital *-algebra $\mathscr{B}$, with a \mbox{C*-enveloping} algebra $B$ encoding the topology. Vector bundles are replaced by $\mathscr{B}$-bimodules that are projective and finitely generated as right (say) $\mathscr{B}$-modules. In this way, we can talk about their class in $K_0(B)$ (they have associated a projection in $M_\infty(\mathscr{B})\subset M_\infty(B)$), and we can also take tensor products over $\mathscr{B}$ and make sense of \eqref{eq:makesense}.
Our main source of examples are vector bundles associated to principal bundles, and the property that we need is that their pullback to the total space of the bundle is trivial. We shall then assume that $\mathscr{B}$ is a unital *-subalgebra of some other *-algebra $\mathscr{A}$ and focus on those bimodules that ``trivialize'' over $\mathscr{A}$. We denote by $\mathrm{Vect}_{\mathscr{A}}(\mathscr{B})$ the family of isomorphism classes of such bimodules.
This is a framework of Propositions \ref{prop:undertheassumption} and \ref{prop:2q}, which we partially restate below.

\begingroup
\renewcommand\theprop{\protect{\ref{prop:undertheassumption}(i-ii)}}
\begin{prop}
Assume that there is a character $\varepsilon:\mathscr{A}\to\C$ and
a bounded unital *-representation $\pi:\mathscr{A}\to\mathcal{B}(\mathcal{H})$ such that
\[
\pi(b)-\varepsilon(b)\in\mathcal{L}^1(\mathcal{H}) \;,\quad\forall\;b\in\mathscr{B}.
\]
Then:
\begin{enumerate}
\item There is a homomorphism of unital semirings
\[
\mathrm{ch}:\big(\mathrm{Vect}_{\mathscr{A}}(\mathscr{B}),\oplus,\otimes\big)  \to\Z[t]/(t^2) ,
\quad\quad
\mathrm{ch}(\mathscr{E})=\mathrm{ch}_0(\mathscr{E})+\mathrm{ch}_1(\mathscr{E})t ,
\]
given, for any $\mathscr{E}\cong p\,\mathscr{B}^N$, by the formulas
\begin{equation*}
\mathrm{ch}_0(\mathscr{E}):=\varepsilon(\tr\,p) , \qquad\quad
\mathrm{ch}_1(\mathscr{E})=\tr_{\mathcal{H}}\big(\pi(\tr\,p)-\varepsilon(\tr\,p)\big) .
\end{equation*}

\item
The underlying morphism of abelian semigroups factors through a homomorphism of abelian groups $K_0(B)\to\Z[t]/(t^2)$.

\end{enumerate}
\end{prop}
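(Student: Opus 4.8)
\medskip\noindent\emph{Proof strategy.}
The plan is to read off everything from two traces on $M_\infty(\mathscr B)$ together with one index pairing. Set $\tau(b):=\tr_{\mathcal H}\big(\pi(b)-\varepsilon(b)\big)$ for $b\in\mathscr B$, which is well defined by hypothesis. The first step is to check that $\tau$ is tracial: since $\varepsilon$ is a character, scalars drop out of commutators, so $\pi(bc)-\pi(cb)=\big[\pi(b)-\varepsilon(b),\,\pi(c)-\varepsilon(c)\big]$ is the commutator of two trace-class operators, whose $\tr_{\mathcal H}$ vanishes; hence $\tau(bc)=\tau(cb)$, and $\varepsilon$ is tracial for free. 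Writing $\mathrm{ch}_0=\varepsilon\circ\tr$ and $\mathrm{ch}_1=\tau\circ\tr$, both are traces on $M_\infty(\mathscr B)$, hence additive on direct sums and constant on Murray--von Neumann classes of projections. This gives well-definedness on $\mathrm{Vect}_{\mathscr A}(\mathscr B)$, additivity $\mathrm{ch}(\mathscr E\oplus\mathscr F)=\mathrm{ch}(\mathscr E)+\mathrm{ch}(\mathscr F)$, and unitality: for the free rank-one module $\varepsilon(1)=1$ and $\tau(1)=\tr_{\mathcal H}(\id-\id)=0$, so $\mathrm{ch}(\mathscr B)=1$.

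The second step settles integrality and part (ii) simultaneously. With grading $\gamma=\left(\begin{smallmatrix}1&0\\0&-1\end{smallmatrix}\right)$ and $F=\left(\begin{smallmatrix}0&1\\1&0\end{smallmatrix}\right)$ on $\mathcal H\oplus\mathcal H$, the diagonal representation $\pi\oplus\varepsilon$ has $[F,(\pi\oplus\varepsilon)(b)]$ off-diagonal with entries $\pm(\pi(b)-\varepsilon(b))$, which are trace-class for $b\in\mathscr B$ and, being norm-limits of these, compact for all $b\in B$ (by continuity and density). Thus $(\mathcal H\oplus\mathcal H,\pi\oplus\varepsilon,F,\gamma)$ is an even Fredholm module over $B$; its index pairing $K_0(B)\to\Z$ is additive, homotopy-invariant and integer-valued, and $\mathrm{ch}_0=K_0(\varepsilon)$ is the map induced by the character. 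On a class carried by a projection $p$ over $\mathscr B$ the module is $1$-summable, and the pairing is the trace of the difference of the two orthogonal projections $\pi(p),\varepsilon(p)$ on $\mathcal H^N$, namely $\tr_{\mathcal H^N}(\pi(p)-\varepsilon(p))=\mathrm{ch}_1(\mathscr E)\in\Z$. Hence $\mathrm{ch}$ takes values in $\Z[t]/(t^2)$, and the additive map factors through a group homomorphism $K_0(B)\to\Z[t]/(t^2)$, which is (ii).

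The remaining and main task is multiplicativity, which I expect to be the real obstacle. For $\mathrm{ch}_0$ one must see that the base-point fibre of $\mathscr E\otimes_{\mathscr B}\mathscr F$ is the tensor product of the fibres; here one uses that the $\mathscr B$-actions are by multiplication inside $\mathscr A$ and that $\varepsilon$ is a character, so the left and right actions agree and are scalar at the base point, making the $\varepsilon$-fibre functor monoidal and forcing $\mathrm{ch}_0(\mathscr E\otimes\mathscr F)=\mathrm{ch}_0(\mathscr E)\,\mathrm{ch}_0(\mathscr F)$. The core is the Leibniz rule
\[
\mathrm{ch}_1(\mathscr E\otimes\mathscr F)=\mathrm{ch}_0(\mathscr E)\,\mathrm{ch}_1(\mathscr F)+\mathrm{ch}_1(\mathscr E)\,\mathrm{ch}_0(\mathscr F).
\]
After writing a projection for $\mathscr E\otimes_{\mathscr B}\mathscr F$ in terms of $p$, $p'$ and the left action---whose entries lie in $\mathscr B$, so the trace-class hypothesis still applies---the mechanism is the first-order expansion
\[
\pi(bc)-\varepsilon(bc)=\big(\pi(b)-\varepsilon(b)\big)\varepsilon(c)+\varepsilon(b)\big(\pi(c)-\varepsilon(c)\big)+\big(\pi(b)-\varepsilon(b)\big)\big(\pi(c)-\varepsilon(c)\big).
\]
Under $\tr_{\mathcal H}$ the first two summands yield exactly the two rank-weighted terms on the right of the Leibniz rule.

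The delicate point, and the step I expect to be genuinely hard, is that the quadratic summand contributes nothing---that is, the absence of any $\mathrm{ch}_1(\mathscr E)\,\mathrm{ch}_1(\mathscr F)$ correction, which is the integer-level incarnation of the relation $t^2=0$. This is exactly where trivialization over $\mathscr A$ is essential: over $\mathscr A$ the projection $\pi(p)$ is connected to the constant projection $\varepsilon(p)$ through projections whose pairwise differences remain trace-class, and transporting the quadratic term along such a path reduces it to its value on a trivial bimodule, where it vanishes. I would organize this as a flatness/homotopy argument, after which additivity and the index interpretation of part (ii) close the computation. The genuine work is thus concentrated in the explicit tensor-product projection and in this vanishing of the cross term; everything else is formal manipulation of traces and of the index pairing.
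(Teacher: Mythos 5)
Your first two steps (traciality of $\tau=\tr_{\mathcal H}\circ(\pi-\varepsilon)$, additivity and unitality of $\mathrm{ch}$, and the even Fredholm module giving integrality and the factorization through $K_0(B)$) are correct and essentially coincide with the paper's treatment of part (ii). The genuine gap is in the multiplicativity step, which you rightly identify as the heart of the matter but do not prove. Your setup for it rests on a false premise: the trace of the projection $P$ representing $\mathscr E\otimes_{\mathscr B}\widetilde{\mathscr E}$ is \emph{not} $\tr p\cdot\tr\widetilde p$, so the first-order expansion of $\pi(bc)-\varepsilon(bc)$ with $b=\tr p$, $c=\tr\widetilde p$ is applied to the wrong element. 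As noted after \eqref{eq:wesee}, $P$ depends on the chosen isomorphism $\widetilde{\mathscr E}\cong\widetilde p\,\mathscr B^{\widetilde N}$, and by \eqref{eq:capitalTrace} one has $\tr P=\sum_{i,j}\widetilde u^{i}_{j}(\tr p)\widetilde v^{j}_{i}$ with $\widetilde u,\widetilde v$ having entries in $\mathscr A$, not in $\mathscr B$ (so your remark that the relevant entries ``lie in $\mathscr B$'' is also off). Your proposed mechanism for killing the quadratic cross term --- a path of projections over $\mathscr A$ with trace-class pairwise differences along which the term is transported to a trivial bimodule --- is a heuristic: you construct no such path, give no reason the quantity is constant along it, and you flag it yourself as the genuinely hard step. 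As written, the proposal establishes part (ii) and the additive half of part (i), but not the semiring homomorphism property.

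The paper's resolution is a short direct computation in which no cross term ever arises and no homotopy is needed. One writes the exact identity
\[
\pi(\tr P)-\varepsilon(\tr P)=\sum_{i,j}\pi(\widetilde u^{i}_{j})\big(\pi(\tr p)-r\big)\pi(\widetilde v^{j}_{i})+r\big(\pi(\tr\widetilde p)-\widetilde r\big),
\]
applies $\tr_{\mathcal H}$, and uses cyclicity of the trace to collapse $\sum_{i,j}\pi(\widetilde v^{j}_{i})\pi(\widetilde u^{i}_{j})=\pi(\tr\widetilde v\widetilde u)=\widetilde r$, yielding $\widetilde r\,\mathrm{ch}_1(\mathscr E)+r\,\mathrm{ch}_1(\widetilde{\mathscr E})$ directly. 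This is precisely where trivialization over $\mathscr A$ enters: through the existence of a trivializing pair with $\widetilde v\widetilde u=1_{\widetilde r}$ (Proposition \ref{prop:uv}), not through a deformation argument. The same pair gives $\mathrm{ch}_0(\mathscr E)=\varepsilon(\tr p)=\varepsilon(\tr vu)=r$ and multiplicativity of $\mathrm{ch}_0$, since the trivializing pair of the tensor product has $r\widetilde r$ columns; your ``fibre functor'' argument for $\mathrm{ch}_0$ is plausible but is likewise not carried out ($\widetilde{\mathscr E}\otimes_{\mathscr B}\C_\varepsilon$ is not obviously $\C_\varepsilon^{\widetilde r}$ as a left $\mathscr B$-module without invoking the trivialization). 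To repair your proof you should replace the homotopy step by the explicit formula for $\tr P$ in terms of a trivializing pair and the cyclicity of $\tr_{\mathcal H}$.
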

\renewcommand\theprop{\protect{\ref{prop:2q}}}
\begin{prop}
Under the assumptions of Proposition \ref{prop:undertheassumption},
if $K_0(B)\cong\Z^2$ and there is $\mathscr{E}\in\mathrm{Vect}_{\mathscr{A}}(\mathscr{B})$ with
$\mathrm{ch}_1(\mathscr{E})=\pm 1$, then $(1,[\mathscr{E}])$ is a basis of $K_0(B)$ and
\begin{equation}
r^2-2r[\mathscr{E}]+[\mathscr{E}\otimes_{\mathscr{B}}\mathscr{E}]=0 ,
\end{equation}
where $r:=\mathrm{ch}_0(\mathscr{E})$.
\end{prop}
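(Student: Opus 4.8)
The plan is to leverage Proposition \ref{prop:undertheassumption} in full: it supplies a unital semiring homomorphism $\mathrm{ch}$ on $\mathrm{Vect}_{\mathscr{A}}(\mathscr{B})$ together with its factorization through a group homomorphism $K_0(B)\to\Z[t]/(t^2)$. Since the target is free abelian of rank two and the source is assumed to be $\Z^2$, the whole argument reduces to linear algebra over $\Z$ once the two candidate generators have been evaluated. Concretely, I would first pin down the images of $1$ and $[\mathscr{E}]$, then promote $\mathrm{ch}$ to an isomorphism, and finally read off the desired quadratic relation purely on the target side, transporting it back by injectivity.

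For the basis claim, first I would compute the images. Unitality of $\mathrm{ch}$ gives $\mathrm{ch}(1)=1$, i.e.\ coordinates $(1,0)$ in the $\Z$-basis $\{1,t\}$ of $\Z[t]/(t^2)$; the hypothesis $\mathrm{ch}_1(\mathscr{E})=\pm1$ gives $\mathrm{ch}([\mathscr{E}])=r\pm t$, i.e.\ coordinates $(r,\pm1)$, where $r=\mathrm{ch}_0(\mathscr{E})$. The change-of-coordinates matrix $\left(\begin{smallmatrix}1 & r\\ 0 & \pm1\end{smallmatrix}\right)$ has determinant $\pm1$, hence is invertible over $\Z$, so $\mathrm{ch}(1)$ and $\mathrm{ch}([\mathscr{E}])$ form a $\Z$-basis of $\Z[t]/(t^2)$. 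In particular the group homomorphism $\mathrm{ch}$ is surjective; a surjective homomorphism between free abelian groups of equal finite rank is injective, so $\mathrm{ch}\colon K_0(B)\xrightarrow{\sim}\Z[t]/(t^2)$ is an isomorphism. Pulling the target basis back through this isomorphism shows that $(1,[\mathscr{E}])$ is a basis of $K_0(B)$.

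For the relation, since $\mathrm{ch}$ is now known to be injective, it suffices to check that it annihilates the element $r^2-2r[\mathscr{E}]+[\mathscr{E}\otimes_{\mathscr{B}}\mathscr{E}]$ of $K_0(B)$. The only nonadditive ingredient is the tensor term: here I would invoke multiplicativity of $\mathrm{ch}$ at the level of the semiring $\mathrm{Vect}_{\mathscr{A}}(\mathscr{B})$, which is closed under $\otimes_{\mathscr{B}}$, and transport the value to $K_0(B)$ through the factorization, obtaining $\mathrm{ch}([\mathscr{E}\otimes_{\mathscr{B}}\mathscr{E}])=\mathrm{ch}([\mathscr{E}])^2=(r\pm t)^2=r^2\pm2rt$ since $t^2=0$. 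Combining, $\mathrm{ch}\big(r^2-2r[\mathscr{E}]+[\mathscr{E}\otimes_{\mathscr{B}}\mathscr{E}]\big)=r^2-2r(r\pm t)+(r^2\pm2rt)=0$, and injectivity yields the claimed identity.

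The genuinely delicate point is the tensor term, not the linear algebra: $K_0(B)$ carries no intrinsic ring structure, so I cannot simply square $[\mathscr{E}]$ inside it. The argument must route the product through $\mathrm{Vect}_{\mathscr{A}}(\mathscr{B})$, where $\mathscr{E}\otimes_{\mathscr{B}}\mathscr{E}$ is again an admissible bimodule and where $\mathrm{ch}$ is multiplicative, and only then descend to $K_0(B)$. I would make sure the closure of $\mathrm{Vect}_{\mathscr{A}}(\mathscr{B})$ under $\otimes_{\mathscr{B}}$ and the compatibility of the factorization with products are both genuinely guaranteed by Proposition \ref{prop:undertheassumption} before committing to the final line.
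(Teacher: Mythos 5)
Your proposal is correct and follows essentially the same route as the paper: evaluate the two index pairings $\mathrm{ch}_0,\mathrm{ch}_1$ on $1$ and $[\mathscr{E}]$, observe that the resulting $2\times 2$ integer matrix is unimodular, and then verify the quadratic relation using multiplicativity of $\mathrm{ch}$ on $\mathscr{E}\otimes_{\mathscr{B}}\mathscr{E}$. The only cosmetic difference is that you bundle the two functionals into the single map $[\mathrm{ch}]\colon K_0(B)\to\Z[t]/(t^2)$ and argue via surjectivity-implies-injectivity for equal-rank free abelian groups, whereas the paper keeps them separate and invokes its Lemma \ref{lemma:1} on dual bases; the content is the same.
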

\endgroup

Applications of the last proposition include the standard Podle\'s sphere $S^2_q$, with $\mathscr{E}$ the bimodule associated to the $\mathcal{O}(U(1))$-Galois extension $\mathcal{O}(S^2_q)\subset\mathcal{O}(SU_q(2))$ and to the fundamental corepresentation of $\mathcal{O}(U(1))$, and the quantum sphere $S^4_q$ in \cite{LPR06}, with $\mathscr{E}$ the bimodule associated to the $\mathcal{O}(SU_q(2))$-Galois extension $\mathcal{O}(S^4_q)\subset\mathcal{O}(S^7_q)$ of \cite{LPR06} and to the fundamental corepresentation of $\mathcal{O}(SU_q(2))$. The latter example is described in details in the last part of the paper.

\noindent
{\bf Notations.}
If $a$ is an $n\times m$ matrix with entries in a ring $R$, we denote by $a_i^j$ the element
in the row $i$ and column $j$. So, the matrix product of two such matrices $a,b$ (with the correct number of rows/columns) has elements $(ab)_i^j=\sum_{k}a_i^kb_k^j$. 
We also define the dotted tensor product as $(a\,\dot{\ot}\,b)_i^j=\sum_ka_i^k\ot b_k^j$.
If $M$ is an $R$-module, we denote by $M^\vee:=\mathrm{Hom}_{\Z}(M,\Z)$ its dual.
If $\mathcal{H}$ is a Hilbert space, $\tr_{\mathcal{H}}:\mathcal{L}^1(\mathcal{H})\to\C$ denotes the trace on $\mathcal{H}$. 
We denote by $\tr:M_n(R)\to R$ the matrix trace, given by $\tr(a):=\sum_{k=1}^na_k^k$ for any square matrix $a\in M_n(R)$.
Finally, $\N$ is the set of natural numbers including $0$.

\section{The K-ring of a rational homology sphere}\label{sec:2}
%Classical characteristic classes and spheres

If $X$ is a compact Haudorff space, the Chern character defines a homomorphism of graded rings from K-theory to singular cohomology:
\[
\mathrm{ch}:K_*(C(X))=K^*(X)\to H^*(X;\mathbb{Q}) \; .
\]
The ring structure in K-theory is induced by the direct sum and tensor product of vector bundles. Explicitly,
\begin{align}
\mathrm{ch}([E_1\oplus E_2]) &=\mathrm{ch}([E_1])+\mathrm{ch}([E_2]) \; , \label{eq:chernA} \\
\mathrm{ch}([E_1\otimes E_2]) &=\mathrm{ch}([E_1])\mathrm{ch}([E_2]) \; , \label{eq:chernB}
\end{align}
for  complex vector bundles $E_1$ and $E_2$ over $X$. An alternative point of view, if $X$ is a $2n$-dimensional smooth manifold (here we are only interested in the even-dimensional case), is to think of $\mathrm{ch}$ as mapping to the de Rham cohomology. Notice that $\mathrm{ch}=\mathrm{ch}_0+\mathrm{ch}_1+\ldots+\mathrm{ch}_n$
where $\mathrm{ch}_k([E])$ is a differential form of degree $2k$. We shall pass with ease from one picture to the other.

Assume that $X$ is connected and oriented (in addition to being compact).
Under the usual identification of $H^0(X;\mathbb{Q})$ with $\mathbb{Q}$, $\mathrm{ch}_0([E])$ becomes an integer and is equal to the \emph{rank} of the bundle $E$.
The top Chern character can be paired with the orientation homology class, or integrated over the manifold,
to give another integer called the (top) \emph{Chern number} of the vector bundle.
In gauge theories, if $n=1$ the Chern number computes the \emph{monopole charge} and if $n=2$ it computes the \emph{instanton number} of the vector bundle.

We wish to characterize those manifolds whose $K^0$-ring is the ring of dual numbers.

Recall that a $2n$-dimensional manifold $X$ is called a \emph{rational homology sphere} if it has the same rational homology groups of $S^{2n}$. A rational homology sphere that is not homeomorphic to a sphere is, for example, the Grassmannian $Gr_2(\R^4)$.
The study of solutions of Yang-Mills equations on rational homology spheres is an open line of research (see for instance \cite{DE22}).

\begin{prop}\label{prop:before}
Let $X$ be a $2n$-dimensional oriented closed manifold, $n\geq 1$. Then, the following are equivalent:
\begin{enumerate}
\item\label{en:one} $K^*(X)\cong\Z^2$ as an abelian group,

\item\label{en:two} $K^*(X)\cong\Z[t]/(t^2)$ as a unital ring,

\item\label{en:three} $X$ is a rational homology sphere and $K^*(X)$ has no torsion.
\end{enumerate}
Moreover, any of the above conditions imply that $X$ is connected and that $K^1(X)=0$.
\end{prop}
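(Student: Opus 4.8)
The plan is to prove the chain of implications $(iii) \Rightarrow (ii) \Rightarrow (i) \Rightarrow (iii)$, using the Chern character as the main computational tool. The essential fact I will exploit is that over $\Q$ the Chern character induces an isomorphism $\mathrm{ch}: K^*(X) \otimes_\Z \Q \to H^*(X;\Q)$, so rational homological data and rational K-theoretic data are interchangeable. For a rational homology sphere of dimension $2n$, the nonzero rational cohomology is $H^0(X;\Q) \cong \Q$ and $H^{2n}(X;\Q) \cong \Q$, with all odd cohomology vanishing; hence $H^*(X;\Q) \cong \Q^2$ as a graded vector space.

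For $(iii) \Rightarrow (ii)$: assuming $X$ is a rational homology sphere with torsion-free $K^*(X)$, the rank of $K^0(X)$ equals $\dim_\Q H^{\mathrm{even}}(X;\Q) = 2$ and the rank of $K^1(X)$ equals $\dim_\Q H^{\mathrm{odd}}(X;\Q) = 0$. Torsion-freeness then forces $K^0(X) \cong \Z^2$ and $K^1(X) = 0$, giving $K^*(X) \cong \Z^2$ already. To upgrade this to a ring isomorphism with $\Z[t]/(t^2)$, I would choose a generator $t$ of the reduced group $\tilde{K}^0(X)$ (the kernel of the rank map), so that $(1,t)$ is a basis. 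The key point is that the product $t^2$ must vanish: its image under $\mathrm{ch}$ lands in the product of $\mathrm{ch}(t)$ with itself, and since $t$ has zero rank, $\mathrm{ch}(t)$ lives in positive even degree, forcing $\mathrm{ch}(t)^2$ into degree $\geq 4$. For $n=1$ this is already above the top degree $2n=2$; for $n \geq 2$ one must argue more carefully that $\mathrm{ch}(t)$ concentrates in the top degree $2n$ so that its square vanishes by degree reasons. Because $\mathrm{ch}$ is a rational isomorphism and $t^2$ has zero rank and zero top-degree component, $\mathrm{ch}(t^2)=0$, hence $t^2$ is torsion, hence $t^2 = 0$ by torsion-freeness.

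The implication $(ii) \Rightarrow (i)$ is immediate, since $\Z[t]/(t^2) \cong \Z^2$ as an abelian group by definition. For $(i) \Rightarrow (iii)$: if $K^*(X) \cong \Z^2$ then it is certainly torsion-free, and tensoring with $\Q$ gives $K^*(X) \otimes \Q \cong \Q^2$, whence $H^*(X;\Q) \cong \Q^2$ via the Chern character isomorphism. I then need to deduce from $\dim_\Q H^*(X;\Q) = 2$ that $X$ is a rational homology sphere, i.e.\ that the two dimensions of cohomology sit precisely in degrees $0$ and $2n$. Connectedness supplies $H^0(X;\Q) \cong \Q$, and Poincar\'e duality for the oriented closed manifold $X$ supplies $H^{2n}(X;\Q) \cong H^0(X;\Q) \cong \Q$; these two classes already exhaust the total dimension $2$, so all other cohomology must vanish, which is exactly the rational homology sphere condition. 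The final assertions about connectedness and $K^1(X)=0$ follow along the way: connectedness is needed to pin down $H^0$, and the vanishing of $K^1$ follows from the absence of odd rational cohomology together with torsion-freeness.

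The main obstacle I anticipate is the degree argument showing $t^2 = 0$ in the ring, specifically controlling where $\mathrm{ch}(t)$ lives for $n \geq 2$. One must be sure that the reduced Chern character of a rank-zero class has no intermediate-degree components that could multiply to give a nonzero top-degree term; for a rational homology sphere this is automatic since the only nonzero positive-degree cohomology is in degree $2n$, so $\mathrm{ch}(t) = \mathrm{ch}_n(t)$ is purely top-degree and $\mathrm{ch}(t)^2 = 0$ trivially. Thus the rational homology sphere hypothesis is precisely what makes the ring structure collapse to the dual numbers, and the argument is clean once the interplay between the Chern character isomorphism, Poincar\'e duality, and torsion-freeness is set up correctly.
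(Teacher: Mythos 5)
Your proposal is correct and follows essentially the same route as the paper: the rational Chern character isomorphism combined with Poincar\'e duality and torsion-freeness, with the ring structure in (iii)$\Rightarrow$(ii) forced because a rank-zero class on a rational homology sphere has Chern character concentrated in top degree $2n$, so its square vanishes for degree reasons and torsion-freeness kills it. The one point to tighten is in (i)$\Rightarrow$(iii), where you invoke connectedness to get $H^0(X;\mathbb{Q})\cong\mathbb{Q}$ while also listing connectedness among the conclusions; the paper avoids this circle by observing that $\dim H^0(X;\mathbb{Q})=\dim H^{2n}(X;\mathbb{Q})$ by Poincar\'e duality, both equal to the number of components, which must then be $1$ because the total rational cohomology has dimension $2$.
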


\begin{proof}
We use the fact that the Chern character extends to an isomorphism of graded rings
\begin{equation}\label{eq:Cherniso}
K^*(X)\otimes_{\Z}\mathbb{Q}\to H^*(X,\mathbb{Q}) .
\end{equation}
We shall also use the fact that, by Poincar\'e duality, $X$ is a rational homology sphere if and only if $H^k(X;\mathbb{Q})=0$ for all $0<k<2n$, and
$H^0(X;\mathbb{Q})\cong H^{2n}(X;\mathbb{Q})\cong\mathbb{Q}$. Note also that, for a rational homology sphere,
$H^*(X;\mathbb{Q})$ can be a graded ring in a unique way, that is: $H^*(X;\mathbb{Q})\cong\mathbb{Q}[t]/(t^2)$ where $t$ is any non-zero element of degree $2n$.

\noindent
\ref{en:two}$\Rightarrow$\ref{en:one} is obvious.

\noindent
\ref{en:one}$\Rightarrow$\ref{en:three}. If $K^*(X)\cong\Z^2$ as an abelian group, we deduce from the Chern isomorphism \eqref{eq:Cherniso} that the $\mathbb{Q}$-vector space $H^*(X;\mathbb{Q})$ has dimension $2$. 
Since $H_0(X;\mathbb{Q})$ and $H^0(X;\mathbb{Q})$ are not zero, by Poincar\'e duality $H^{2n}(X;\mathbb{Q})$ is also not zero.
Thus, it must be $H^0(X;\mathbb{Q})\cong H^{2n}(X;\mathbb{Q})\cong\mathbb{Q}$ (which in particular means that $X$ is connected), and $H^k(X;\mathbb{Q})=0$ for every other value of $k$.

%medskip

\noindent
\ref{en:three}$\Rightarrow$\ref{en:two}. If $X$ is a rational homology sphere, $K^*(X)\otimes_{\Z}\mathbb{Q}\cong H^*(X;\mathbb{Q})\cong\mathbb{Q}[t]/(t^2)$. More precisely, $K^0(X)\otimes_{\Z}\mathbb{Q}\cong H^*(X;\mathbb{Q})\cong\mathbb{Q}[t]/(t^2)$ and $K^1(X)\otimes_{\Z}\mathbb{Q}=0$, since $t$ has even degree.
If $K^*(X)$ has no torsion, we deduce that $K^1(X)=0$ and $K^0(X)\cong\Z^2$ as an abelian group.

Any element in $K^0(X)$ can be expressed as a difference $[E]-[X\times\C^r]$ between the class of a vector bundle and the one of a trivial bundle. We shall write $[X\times\C^r]=r[X\times\C]=r$, denoting by $1$ the class of the trivial line bundle (the unit of the K-ring). Since \eqref{eq:Cherniso} is an isomorphism, we can find a vector bundle $E$ and an integer $r$ such that
$q\,\mathrm{ch}([E]-r)=t$ for some non-zero $q\in\mathbb{Q}$. Up to a rescaling, we can choose $t$ such that $q=-1$, and get
\[
\mathrm{ch}(r-[E])=t .
\]
Note that $\mathrm{ch}_0(r-[E])=0$ forces $r$ to be the rank of $E$, and the remaining condition is
$\mathrm{ch}_n([E])=-t$. The restriction of the Chern isomorphism to $K^0(X)$ gives an injective homomorphism of rings
\begin{equation}\label{eq:issurj}
K^0(X)\to \mathbb{Q}[t]/(t^2) ,\qquad\quad
j+k[E] \mapsto (j+rk)-kt ,
\end{equation}
where $j,k\in\Z$. Since the matrix
\[
\mat{r}{ 1 & r \\ 0 & -1 }
\]
is invertible over the integers, the homomorphism \eqref{eq:issurj} has image $\Z[t]/(t^2)$.
\end{proof}

The next proposition tells us how to construct the generator $t$ of Proposition \ref{prop:before}.

\begin{prop}\label{prop:next}
Let $X$ be a $2n$-dimensional oriented closed manifold, $n\geq 1$. 
Assume that $K^0(X)\cong\Z^2$ as an abelian group, and that there is a complex vector bundle $E\to X$ with rank $r$ and Chern number $c=\pm 1$. Then,
$K^0(X)=\Z[t]/(t^2)$ with $t=r-[E]$.
\end{prop}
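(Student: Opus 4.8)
The plan is to read off everything from the Chern character. Restricting the Chern isomorphism \eqref{eq:Cherniso} to $K^0(X)$ gives an injective ring homomorphism $\mathrm{ch}\colon K^0(X)\to H^{\mathrm{even}}(X;\Q)$, injective because $K^0(X)\cong\Z^2$ is torsion-free, and an isomorphism after tensoring with $\Q$; hence $\dim_\Q H^{\mathrm{even}}(X;\Q)=2$. Since $X$ is an oriented closed $2n$-manifold with $n\geq 1$, each connected component contributes a copy of $\Q$ to both $H^0$ and $H^{2n}$, so $2\cdot(\#\,\text{components})\leq 2$. I would conclude that $X$ is connected, that $H^0(X;\Q)\cong H^{2n}(X;\Q)\cong\Q$, and that $H^{2i}(X;\Q)=0$ for all $0<i<n$; in particular $X$ behaves like a rational homology sphere in even degrees, which is all that a statement about $K^0$ can require.

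With this cohomological picture in hand, the two substantive claims about $t=r-[E]$ follow quickly. As $E$ has rank $r$, the class $t$ has vanishing rank, so $\mathrm{ch}_0(t)=0$; since the intermediate even cohomology vanishes, $\mathrm{ch}(t)=\mathrm{ch}_n(t)\in H^{2n}(X;\Q)$. Pairing with the fundamental class gives $\int_X\mathrm{ch}_n(t)=-\int_X\mathrm{ch}_n([E])=-c=\mp 1\neq 0$, so $\mathrm{ch}(t)$ is a nonzero vector on the line $H^{2n}(X;\Q)$. For the relation $t^2=0$, I would note that $\mathrm{ch}(t^2)=\mathrm{ch}(t)^2$ lies in $H^{4n}(X;\Q)=0$ (because $4n>2n=\dim X$ for $n\geq 1$), and injectivity of $\mathrm{ch}$ forces $t^2=0$.

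The remaining and main point is that $(1,t)$ is a $\Z$-basis of $K^0(X)$. Here I would introduce the homomorphism
\[
\Phi\colon K^0(X)\longrightarrow \Z\times\Q,\qquad \Phi(x)=\Big(\mathrm{rank}(x),\,{\textstyle\int_X}\mathrm{ch}_n(x)\Big).
\]
Because only $H^0$ and $H^{2n}$ survive and the pairing with $[X]$ identifies $H^{2n}(X;\Q)$ nondegenerately with $\Q$, the map $\Phi$ records $\mathrm{ch}$ faithfully and is therefore injective. One computes $\Phi(1)=(1,0)$ and $\Phi([E])=(r,c)$ with $c=\pm 1$, and since $\det\left(\begin{smallmatrix}1&r\\0&c\end{smallmatrix}\right)=c=\pm1$ the pair $\Phi(1),\Phi([E])$ generates the standard lattice $\Z\times\Z$. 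If one knows that $\Phi$ takes values in $\Z\times\Z$, it is then an isomorphism onto $\Z^2$ carrying the unimodular pair to a basis, so $\{1,[E]\}$—equivalently $\{1,t\}$ by the change of variables $t=r-[E]$—is a $\Z$-basis; combined with $t^2=0$ this yields $K^0(X)=\Z[t]/(t^2)$.

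The hard part is exactly the integrality assertion that $\int_X\mathrm{ch}_n(x)\in\Z$ for all $x\in K^0(X)$, i.e.\ that the Chern number is genuinely integer-valued. Newton's identities, together with the vanishing of $H^{2i}(X;\Q)$ for $0<i<n$, collapse the top Chern character of any bundle $F$ to $\mathrm{ch}_n(F)=\tfrac{(-1)^{n-1}}{(n-1)!}\,c_n(F)$ with $c_n(F)\in H^{2n}(X;\Z)$ integral, giving a priori only $\int_X\mathrm{ch}_n\in\tfrac{1}{(n-1)!}\Z$. For $n\leq 2$—in particular for the spheres $S^2$ and $S^4$ underlying the quantum examples—one has $(n-1)!=1$, so integrality is automatic and the argument closes with no extra input. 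For general $n$ the needed statement is the Bott-type divisibility $(n-1)!\mid\int_X c_n(F)$, which is precisely what makes the top Chern character integral; I expect this to be the only ingredient beyond the linear algebra above, and it is exactly what is encoded in the definition of the Chern number as an integer.
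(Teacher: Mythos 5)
Your argument is correct and is in essence the paper's own: both proofs detect that $(1,[E])$ is a $\Z$-basis by pairing $K^0(X)$ against the two integer-valued functionals ``rank'' and ``Chern number'' and noting that the resulting matrix $\bigl(\begin{smallmatrix}1&0\\ r&c\end{smallmatrix}\bigr)$ is unimodular --- the paper packages this as Lemma \ref{lemma:1}, you as the injective map $\Phi$ --- and both obtain $t^2=0$ from the Chern character, the paper via the ring isomorphism already secured in Proposition \ref{prop:before} and you directly from $H^{4n}(X;\mathbb{Q})=0$. (Your remark that $K^0(X)\cong\Z^2$ alone only controls the \emph{even} rational cohomology is a small improvement in precision: Proposition \ref{prop:next} does not assume $K^1(X)=0$, so one cannot literally invoke the hypotheses of Proposition \ref{prop:before}, but, as you say, only the even part matters.) The one point where you part company with the paper is in flagging the integrality of $\int_X\mathrm{ch}_n(x)$ for \emph{all} $x\in K^0(X)$ as the hard step. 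You are right that this is the load-bearing input --- it is exactly what is needed for $\varphi^2$ to lie in $K^0(X)^\vee$ and hence for Lemma \ref{lemma:1} to apply --- but it is not a gap you have introduced: the paper builds it into the definition of the Chern number (``\dots to give another integer called the (top) Chern number'') and then asserts $\varphi^2\in K^0(X)^\vee$ without further comment. Your reduction via Newton's identities to the divisibility $(n-1)!\mid\int_X c_n$, automatic for $n\le 2$ and hence for every example treated in the paper (and automatic whenever the Chern number is realized as a Fredholm index, as in Proposition \ref{prop:sfereclassiche} and in the quantum analogues of Section \ref{sec:3}), makes explicit the precise integrality statement that the paper leaves implicit; for $n\ge 3$ and a general oriented closed manifold satisfying the hypotheses this divisibility is genuinely the only thing left to justify, in your write-up and in the paper's alike.
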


\begin{proof}
We are under the hypothesis of Proposition \ref{prop:before}. We already know that there is an isomorphism of rings $K^0(X)\cong\Z[t]/(t^2)$. Let us identify $K^0(X)$ with $\Z[t]/(t^2)$.

The rank and Chern number define elements $\varphi^1,\varphi^2\in K^0(X)^\vee$.
Let $E$ be the bundle in the statement of the proposition. The matrix
\[
\mat{c}{
\varphi^1(1) & \varphi^2(1) \\
\varphi^1([E]) & \varphi^2([E]) \\
}=\mat{c}{
1 & 0 \\
r & c
}
\]
is invertible over the integer (it has determinant $c=\pm 1$). It follows from Lemma \ref{lemma:1} that the pair $(1,[E])$ is a basis of the $\Z$-module $K^0(X)$, and the pair $(\varphi^1,\varphi^2)$ is a basis of $K^0(X)^\vee$.
One has
\[
t=j+s[E]
\]
for some $j,s\in\Z$. Since the pair $(1,t)$ is also a basis, the matrix
\[
\mat{c}{ 1 & 0 \\ j & s }
\]
transforming one basis into the other must be invertible over the integers, that means that $s=\pm 1$ is a sign.
Note that
\[
j^2 +2js [E] + [E\otimes E]=t^2=0.
\]
Since $E\otimes E$ has rank $r^2$, applying $\varphi^1$ to both sides we find
\[
j^2 +2js r + r^2=(j+sr)^2=0 \;,
\]
that means $j=-sr$ and $t=-s(r-[E])$. Since $t\mapsto -t$ defines an automorphism of $\Z[t]/(t^2)$, up to a reparametrization we can choose $t=r-[E]$.
\end{proof}

\begin{ex}\label{ex:S2}
Let $X=S^2$ and let $E$ be the vector bundle associated to the Hopf fibration \[S^3\xrightarrow{U(1)}S^2\] and to the
defining representation of $U(1)$. One checks that the hypothesis of Proposition \ref{prop:next} are satisfied, and find the well-known isomorphism $K^0(S^2)\cong\Z[t]/(t^2)$ with $t=1-[E]$ (see for instance \cite[Cor.~2.2.2]{Ati19}).
\end{ex}

\begin{ex}\label{ex:S4}
Let $X=S^4$ and let $E$ be the vector bundle associated to the Hopf fibration \[S^7\xrightarrow{SU(2)}S^4\] and to the
defining representation of $SU(2)$. One checks that the hypothesis of Proposition \ref{prop:next} are satisfied, and find the isomorphism $K^0(S^4)\cong\Z[t]/(t^2)$ with $t=2-[E]$.
\end{ex}

The Chern classes of the bundles in the previous two examples are well-known. The computation can be found in textbooks. In the following, we give the explicit computation for every even-dimensional sphere.

%medskip

Recall that, if $E\to X$ is a vector bundle of rank $r$ and $\lambda^i(E)$ denotes its $i$-th exterior power, then the \emph{Euler class} of $E$ in $K^0(X)$ is given by (see e.g.~\cite[pag.~187]{Kar78}):
\[
\chi(E)=\sum_{i=0}^r(-1)^i[\lambda^i(E)] .
\]
In Example \ref{ex:S2}, we see that $t=1-[E]$ is the Euler class of the basic monopole bundle.
In Example \ref{ex:S4}, since line bundles on $S^4$ are trivial, for $\lambda^2(E)=E\wedge E$, one has $[\lambda^2(E)]=1$ and
$t=2-[E]=1-[E]+[\lambda^2(E)]$ is the Euler class of the instanton bundle.

We now discuss the example of the unit sphere $S^{2n}$ in $\R^{2n+1}$, for $n$ any positive integer.
With an abuse of notations, we denote by $x_1,\ldots,x_{2n+1}$ both the Cartesian coordinates on $\R^{2n+1}$ and their restriction to $S^{2n}$. We shall also need the standard generators $\gamma_1,\ldots,\gamma_{2n+1}\in M_{2^n}(\C)$ of the Clifford algebra of $\R^{2n+1}$. In terms of Pauli matrices
\[
\sigma_1:=\mat{c}{0 & 1 \\ 1 & 0} , \qquad\quad
\sigma_2:=\mat{c}{0 & -\sqrt{-1} \\ \sqrt{-1} & 0} , \qquad\quad
\sigma_3:=\mat{r}{1 & 0 \\ 0 & -1} ,
\]
and with the identification of $M_{2^n}(\C)$ with $\underbrace{M_2(\C)\otimes\ldots\otimes M_2(\C)}_{n\text{ times}}$,
one has
\[
\gamma_{2i+j}:=(\sigma_3)^{\otimes i}\otimes\sigma_j\otimes 1^{\otimes n-i-1}
\]
for all $0\leq i\leq n-1$ and $j\in\{1,2\}$, and
\[
\gamma_{2n+1}:=(\sigma_3)^{\otimes n} .
\]
Every vector bundle $E\to X$ can be dually described by its $C(X)$-module of continuous sections, which in turn is described by a projection $p\in M_N(C(S^{2n}))$ for some $N\geq 1$. With this in mind, we have the following.

\begin{prop}\label{prop:sfereclassiche}
The projection
\begin{equation}\label{eq:projs2n}
p:=\frac{1}{2}\Big(1+\sum_{i=1}^{2n+1}x_i\gamma_i\Big) \in M_N(C(S^{2n}))
\end{equation}
describes a rank $2^{n-1}$ vector bundle $E\to S^{2n}$ whose (top) Chern number is $(-1)^n$. As a corollary,
one has the isomorphism of rings $K^0(S^{2n})\cong\Z[t]/(t^2)$ with $t=2^{n-1}-[E]$.
\end{prop}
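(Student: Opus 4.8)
The plan is to verify that the matrix~\eqref{eq:projs2n} is a self-adjoint idempotent whose associated bundle $E$ has the claimed rank, and then to compute its top Chern number and invoke Proposition~\ref{prop:next}. First I would check that $p$ is a projection. The key algebraic input is the Clifford relation $\gamma_i\gamma_j+\gamma_j\gamma_i=2\delta_{ij}$, which one reads off from the explicit tensor-product formulas and the standard anticommutation of Pauli matrices $\sigma_a\sigma_b+\sigma_b\sigma_a=2\delta_{ab}$. Writing $D:=\sum_i x_i\gamma_i$, the Clifford relations together with $\sum_i x_i^2=1$ on $S^{2n}$ give $D^2=\big(\sum_i x_i^2\big)1=1$, whence $p^2=\tfrac14(1+2D+D^2)=\tfrac14(2+2D)=p$; self-adjointness follows since each $\gamma_i$ is Hermitian and the $x_i$ are real. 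The rank is $\tfrac12\tr(1+D)=\tfrac12\tr(1)$ because each $\gamma_i$ is traceless, and $\tr(1)=2^n$, giving rank $2^{n-1}$ as a continuous (locally constant, hence constant on the connected $S^{2n}$) function.

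Next I would compute the top Chern character of $E$, which for a rank-$2^{n-1}$ bundle on $S^{2n}$ equals (up to sign conventions) its top Chern number. The cleanest route is the Chern--Weil / cyclic-cocycle formula for the top component of the Chern character of a projection,
\[
\mathrm{ch}_n([E])=\frac{(-1)^n}{n!}\Big(\frac{\sqrt{-1}}{2\pi}\Big)^{n}\!\!\int_{S^{2n}}\!\!\tr\big(p\,(dp)^{2n}\big),
\]
where $dp=\tfrac12\sum_i\gamma_i\,dx_i$. Expanding $p(dp)^{2n}$ and taking the matrix trace reduces everything to computing $\tr(\gamma_{i_0}\gamma_{i_1}\cdots\gamma_{i_{2n}})$ against the volume form. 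By the Clifford relations this trace is nonzero only for the fully antisymmetric index combination, yielding the chirality trace $\tr(\gamma_1\cdots\gamma_{2n+1})$ times the standard volume form $dx_1\wedge\cdots$ restricted to $S^{2n}$; the combinatorial factor $(2n)!/2^{2n}$ from the $(dp)^{2n}$ expansion cancels against the $1/n!$ and numerical prefactors, and integrating the resulting multiple of the volume form over $S^{2n}$ (area $2\pi^{n+1}/n!$ type normalization) produces the integer $(-1)^n$.

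The main obstacle I anticipate is bookkeeping rather than conceptual: correctly tracking the numerical and sign factors in the trace of the $2n+1$ gamma matrices and in the curvature integral so that the dust settles to exactly $(-1)^n$ and not some other normalization. One must pin down the chirality trace $\tr\big(\gamma_1\gamma_2\cdots\gamma_{2n+1}\big)$ from the Pauli representation, handle the Jacobian from expressing the volume form on $S^{2n}$ via $\sum x_i\,dx_i=0$, and match all conventions so the pairing is an honest integer. Once $\mathrm{ch}_n([E])=(-1)^n$ is established, the claim follows immediately: the hypotheses of Proposition~\ref{prop:next} hold with Chern number $c=(-1)^n=\pm1$ and rank $r=2^{n-1}$ (using $K^0(S^{2n})\cong\Z^2$, a standard consequence of Bott periodicity), so $K^0(S^{2n})\cong\Z[t]/(t^2)$ with $t=2^{n-1}-[E]$.
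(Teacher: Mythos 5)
Your proposal is correct and follows essentially the same route as the paper's proof in Appendix~\ref{AppB}: check the Clifford/idempotent relations, get the rank from tracelessness of the $\gamma_i$, compute $\mathrm{ch}_n$ via the Grassmannian connection and the chirality trace, and invoke Proposition~\ref{prop:next}. The one step you leave as ``bookkeeping'' --- evaluating $\int_{S^{2n}}\tr\big(p\,(dp)^{2n}\big)$ directly on the sphere, with the induced volume form --- is handled more cleanly in the paper by applying Stokes' theorem to rewrite it as $\int_{B^{2n+1}}\tr\big((dp)^{2n+1}\big)$ over the unit ball, where the integrand is a constant multiple of the Euclidean volume form; also note that your extra $(-1)^n$ in the Chern--Weil formula is nonstandard relative to the paper's convention, though it is harmless for the corollary since Proposition~\ref{prop:next} only requires $c=\pm1$.
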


The interest reader can find the proof of the previous proposition (the computation of the Chern number) in Appendix~\ref{AppB}. Note that the projection \eqref{eq:projs2n} is not only continuous, but linear in the Cartesian coordinates on the ambient Euclidean space.

Again the isomorphism $K^0(S^{2n})\cong\Z[t]/(t^2)$ is well-known, but Proposition \ref{prop:next} allows one to prove it in a completely algebraic way (by a simple index computation), and to show that \mbox{$t=2^{n-1}-[E]$}, with $E$ the bundle associated to the projection \eqref{eq:projs2n}.
This should be compared with \cite{SBS23}, where it is shown
 that the same bundle $E$ is the non-trivial generator of $K^0(S^{2n})$ as an abelian group (but with a much more complicated proof, and with no mention to the ring structure).

\begin{rem}
The index computation in Proposition \ref{prop:sfereclassiche} is consistent with the table given in \cite[page 20]{HL04} for even-dimensional orthogonal quantum spheres. However, the computation in \cite{HL04} is only valid if the deformation parameter $q\neq 1$, so the classical result cannot be derived from the quantum case.
\end{rem}

\section{Characteristic classes of noncommutative vector bundles}\label{sec:3}
In this section we work in the following framework.
We have a unital *-algebra $\mathscr{B}$, and its C*-enveloping algebra $B$ (we assume that it exists).
One should think of $B$ as describing a compact quantum space, and $\mathscr{B}$ as describing some additional structure on this quantum space. If $B=C(X)$ is commutative, and $X$ is a smooth manifold, $\mathscr{B}$ could be the class of smooth functions; if $X$ is an algebraic variety, $\mathscr{B}$ could be the class of polynomial functions; etc.
In addition, we have an inclusion of unital *-algebras
\[
\mathscr{B}\subseteq\mathscr{A} ,
\]
where $\mathscr{A}$ describes some auxiliary noncommutative space used for computations.

We are interested in classes in $K_0(B)$ that are represented by finitely generated projective right $\mathscr{B}$-modules (``noncommutative vector bundles'').

In Sect.~\ref{sec:3.1} we recall the construction of $\Z$-linear maps $K_0(B)\to\Z$ induced by an even Fredholm module. We shall focus on 1-summable Fredholm modules.
In Sect.~\ref{sec:3.2} we discuss some properties of $\mathscr{B}$-bimodules that are finitely generated and projective as right $\mathscr{B}$-modules.
In Sect.~\ref{sec:loctriv}, we specialize to right modules that ``trivialize'' over $\mathscr{A}$ (that is, their ``pullback'' to the total space is trivial), and denote by $\mathrm{Vect}_{\mathscr{A}}(\mathscr{B})$ their class.
In Sect.~\ref{sec:3.3}, under suitable hypothesis, we construct an isomorphism
\[
\mathrm{ch}:K_0(B)\to\Z[t]/(t^2)
\]
of abelian groups that is ``multiplicative'', in the sense that it is compatible with the tensor product in $\mathrm{Vect}_{\mathscr{A}}(\mathscr{B})$ (Proposition \ref{prop:undertheassumption} and \ref{prop:2q}).

\subsection{1-summable even Fredholm modules}\label{sec:3.1}

Let $B$ be a unital C*-algebra, $\pi_1,\pi_2:B\to\mathcal{B}(\mathcal{H})$ be two bounded *-representations on a Hilbert space $\mathcal{H}$ and
$F,\gamma\in\mathcal{B}(\mathcal{H}\otimes\C^2)$ the operators
\[
F=\mat{c}{0 & 1 \\ 1 & 0} \;,\qquad\quad
\gamma=\mat{c}{1 & 0 \\ 0 & -1} \;,
\]
where $1$ is the identity on $\mathcal{H}$. Let $\pi$ be the representation of $B$ on $\mathcal{H}\otimes\C^2$ which is the direct sum of $\pi_1$ and $\pi_2$.
Note that
\[
\gamma F[F,\pi(b)]=\mat{c}{\pi_1(b)-\pi_2(b) & 0 \\ 0 & \pi_1(b)-\pi_2(b)} .
\]
If the difference $\pi_1(b)-\pi_2(b)$ is of trace class on $\mathcal{H}$ for all $b$ in a dense unital *-subalgebra $\mathscr{B}$ of $B$, then $(B,\mathcal{H}\otimes\C^2,F,\gamma)$ is a $1$-summable even Fredholm module which
defines a $\Z$-module map $\varphi:K_0(B)\to\Z$. For $[p]$ the K-theory class of a projection $p\in M_N(\mathscr{B})$, this is given by
\[
\varphi([p])=\tfrac{1}{2}\,\tr_{\mathcal{H}\otimes\C^2}(\gamma F[F,\pi(\tr\,p)])=\tr_{\mathcal{H}}\big(
\pi_1(\tr\,p)-\pi_2(\tr\,p)
\big) \;,
\]
where $\tr\,p=\sum_{i=1}^Np_i^i\in\mathscr{B}$ and $\tr_{\mathcal{H}}$ (resp.~$\tr_{\mathcal{H}\otimes\C^2}$)
is the trace of operators on the Hilbert space $\mathcal{H}$ (resp.~$\mathcal{H}\otimes\C^2$).
We emphasize that $\varphi([p])$ only depends on the K-theory class of $p$, and it is an integer because it is the index of a Fredholm operator associated to the Fredholm module. Thus,
\[
\varphi\in K_0(B)^\vee .
\]
(In general, in this way one constructs a $\Z$-module map $K^0(B)\to K_0(B)^\vee$ which needs not to be injective or surjective. See the comment in Appendix \ref{sec:app}.)

\subsection{Tensor products}\label{sec:3.2}

In this section $\mathscr{B}$ is a unital algebra.
Recall that a right $\mathscr{B}$-module $\mathscr{E}$ is finitely generated and projective if and only if there exists finitely many elements $\xi^1,\ldots,\xi^N\in\mathscr{E}$
and finitely many right $\mathscr{B}$-linear maps $\psi_1,\ldots,\psi_N\in
\mathscr{E}^\vee=\mathrm{Hom}_{\mathscr{B}}(\mathscr{E},\mathscr{B})$ such that $\sum_{i=1}^N\xi^i\psi_i$ is the identity on $\mathscr{E}$.
With these data one constructs right $\mathscr{B}$-module maps
\begin{align}
\psi &:\mathscr{E}\longrightarrow\mathscr{B}^N \;, && e\longmapsto\begin{pmatrix}\psi_1(e) \\ \vdots \\ \psi_N(e) \end{pmatrix} \;, \label{eq:isoA} \\
\xi &:\mathscr{B}^N\longrightarrow\mathscr{E} \;, && \begin{pmatrix}b_1 \\ \vdots \\ b_N \end{pmatrix}
\longmapsto \sum_{j=1}^N\xi^jb_j \;, \label{eq:isoB}
\end{align}
such that $\xi\circ\psi=\id_{\mathscr{E}}$.
The map $\psi$ induces an isomorphism (of right $\mathscr{B}$-modules) between $\mathscr{E}$ and its image $p\,\mathscr{B}^N$ in $\mathscr{B}^N$, with the projection $p=(p_i^j)\in M_N(\mathscr{B})$ explicitly given by
\[
p_i^j=\psi_i(\xi^j) .
\]

\begin{lemma}\label{lemma:willrephrase}
Let $\mathscr{E}$ and $\mathscr{E}'$ be two $\mathscr{B}$-bimodules that are finitely generated and projective as right $\mathscr{B}$-modules. Then, $\mathscr{E}\otimes_{\mathscr{B}}\mathscr{E}'$ is finitely generated and projective as a right $\mathscr{B}$-module.
\end{lemma}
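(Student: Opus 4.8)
The plan is to verify the dual-basis criterion recalled just before the statement, by exhibiting explicit generators and dual functionals for $\mathscr{E}\otimes_{\mathscr{B}}\mathscr{E}'$. Since $\mathscr{E}$ and $\mathscr{E}'$ are finitely generated and projective as right $\mathscr{B}$-modules, I would first fix such data for each factor: elements $\xi^1,\dots,\xi^N\in\mathscr{E}$ and functionals $\psi_1,\dots,\psi_N\in\mathscr{E}^\vee$ with $\sum_i\xi^i\psi_i=\id_{\mathscr{E}}$, and likewise $\eta^1,\dots,\eta^{N'}\in\mathscr{E}'$ and $\phi_1,\dots,\phi_{N'}\in(\mathscr{E}')^\vee$ with $\sum_j\eta^j\phi_j=\id_{\mathscr{E}'}$. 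Here I use that the tensor product is formed over the right action of $\mathscr{B}$ on $\mathscr{E}$ and the left action of $\mathscr{B}$ on $\mathscr{E}'$ (both available since the two are bimodules), while the surviving right $\mathscr{B}$-module structure on $\mathscr{E}\otimes_{\mathscr{B}}\mathscr{E}'$ is the one coming from $\mathscr{E}'$.

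The natural candidate generators are the $NN'$ elements $\xi^i\ot\eta^j$, and the candidate dual functionals are the maps $\Theta_{ij}$ defined on simple tensors by $\Theta_{ij}(e\ot e')=\phi_j\big(\psi_i(e)\,e'\big)$, where $\psi_i(e)\in\mathscr{B}$ acts on $e'\in\mathscr{E}'$ through the left action. I would first check that each $\Theta_{ij}$ is well defined on the balanced tensor product: the assignment $(e,e')\mapsto\phi_j(\psi_i(e)\,e')$ is $\Z$-bilinear and respects the relation $eb\ot e'=e\ot be'$, because right $\mathscr{B}$-linearity of $\psi_i$ gives $\psi_i(eb)=\psi_i(e)b$, so $\psi_i(eb)\,e'=\psi_i(e)\,(be')$. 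Right $\mathscr{B}$-linearity of $\Theta_{ij}$ then follows from that of $\phi_j$, using $(e\ot e')b=e\ot(e'b)$.

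It remains to verify the completeness relation. Starting from a simple tensor and inserting the two resolutions of the identity, I would compute $e\ot e'=\sum_i\xi^i\psi_i(e)\ot e'=\sum_i\xi^i\ot\psi_i(e)\,e'=\sum_{i,j}\xi^i\ot\eta^j\,\phi_j(\psi_i(e)\,e')$, where the middle equality moves the element $\psi_i(e)\in\mathscr{B}$ across the tensor and the last step resolves $\psi_i(e)\,e'\in\mathscr{E}'$. This is exactly $\sum_{i,j}(\xi^i\ot\eta^j)\,\Theta_{ij}(e\ot e')$, so $\sum_{i,j}(\xi^i\ot\eta^j)\Theta_{ij}=\id$ on simple tensors and hence everywhere, establishing that $\mathscr{E}\otimes_{\mathscr{B}}\mathscr{E}'$ is finitely generated and projective as a right $\mathscr{B}$-module.

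The only genuinely delicate point is the well-definedness of the functionals $\Theta_{ij}$ over the balanced tensor product; this is where the bimodule hypothesis is essential, since it is the interplay between the right action of $\mathscr{B}$ on $\mathscr{E}$ (through which $\psi_i$ is linear) and the left action on $\mathscr{E}'$ that makes the map descend. Everything else is a formal manipulation of the resolutions. As a byproduct, the associated projection $P\in M_{NN'}(\mathscr{B})$ has entries $P_{(ij)}^{(kl)}=\phi_j\big(p_i^k\,\eta^l\big)$ with $p_i^k=\psi_i(\xi^k)$, which records explicitly how the left action of the first factor enters and will be useful for the later computations.
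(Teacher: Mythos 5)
Your proof is correct and follows essentially the same route as the paper: the same generators $\xi^i\otimes_{\mathscr{B}}\eta^j$, the same dual functionals $\Theta_{ij}(e\otimes e')=\phi_j(\psi_i(e)e')$ (the paper's $\psi_{j,k}$), the same well-definedness check via right $\mathscr{B}$-linearity of $\psi_i$, and the same completeness computation; your closing formula for the projection matches the paper's equation \eqref{eq:wesee}.
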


\begin{proof}
Choose elements $\xi^1,\ldots,\xi^N\in\mathscr{E}$ and $\psi_1,\ldots,\psi_N\in
\mathscr{E}^\vee$ and do the same for $\mathscr{E}'$ (with obvious notations).
Let
\[
\xi^{jk}:=\xi^j\otimes_{\mathscr{B}}\xi'^k \in\mathscr{E}\otimes_{\mathscr{B}}\mathscr{E}' ,
\]
and define $\psi_{j,k}\in (\mathscr{E}\otimes_{\mathscr{B}}\mathscr{E}')^\vee$ as follows
\[
\psi_{j,k}:\mathscr{E}\otimes_{\mathscr{B}}\mathscr{E}'\longrightarrow\mathscr{B} \;,
\qquad\quad
e\otimes_{\mathscr{B}}e'\longmapsto\psi_k'\big(\psi_j(e)e'\big) .
\]
Note that $\psi_{j,k}$ is well-defined, since $\psi_j$ is right $\mathscr{B}$-linear
(so $\psi_{j,k}(eb\otimes_{\mathscr{B}}e')=\psi_{j,k}(e\otimes_{\mathscr{B}}be')$ for
$b\in\mathscr{B}$).
For all $e,e'$, by definition

\begin{align*}
\sum_{j,k}\xi^{j,k}\psi_{j,k}(e &\otimes_{\mathscr{B}}e')
=\sum_{j,k} \xi^j\otimes_{\mathscr{B}}\xi'^k\psi_k'\big(\psi_j(e)e'\big) \hspace*{-1cm} \\
&=\sum_{j,k} \xi^j\otimes_{\mathscr{B}}\psi_j(e)e'
&& \text{(since $\sum\nolimits_k\xi'^k\psi_k'$ is the identity)}
 \\
&=\sum_{j,k} \xi^j\psi_j(e)\otimes_{\mathscr{B}}e' 
&& \text{(the tensor product is balanced over $\mathscr{B}$)}
\\
&=\sum_{j,k} e\otimes_{\mathscr{B}}e' . && \text{(since $\sum\nolimits_j\xi^k\psi_k$ is the identity)}
\end{align*}
So $\sum_{j,k}\xi^{j,k}\psi_{j,k}$ is the identity on $\mathscr{E}\otimes_{\mathscr{B}}\mathscr{E}'$, thus proving the lemma.
\end{proof}

In the notations of the above proof, the projection corresponding to the right $\mathscr{B}$-module $\mathscr{E}\otimes_{\mathscr{B}}\mathscr{E}'$ has matrix elements
\begin{equation}\label{eq:wesee}
\psi_{j_1,j_2}(\xi^{k_1,k_2})=
\psi_{j_2}'\big(\psi_{j_1}(\xi^{k_1})\xi'^{k_2}\big)
=\psi_{j_2}'\big(p_{j_1}^{k_1}\xi'^{k_2}\big) .
\end{equation}
where now rows and columns are labelled by pairs $(j_1,j_2)$ and $(k_1,k_2)$ in
$\{1,\ldots,N\}\times\{1,\ldots,N'\}$. We see that \eqref{eq:wesee} depends not only on the projections $p$ and $p'$ of $\mathscr{E}$ and $\mathscr{E}'$, but also on the explicit choice of isomorphism between $\mathscr{E}'$ and $p'\,\mathscr{B}^{n'}$.

In particular, the matrix trace of the projection \eqref{eq:wesee} is given by
\begin{equation}\label{eq:matrixtrace}
\sum_{j_1,j_2}\psi_{j_1,j_2}(\xi^{j_1,j_2})=
\sum_{j_2}\psi_{j_2}'\big(\tr\, p\,\xi'^{j_2}\big) .
\end{equation}
We see that this depends on the trace $\tr\, p=\sum_{j_1}p_{j_1}^{j_1}\in\mathscr{B}$ of $p$, and on the choice of isomorphism between $\mathscr{E}'$ and $p'\,\mathscr{B}^{N'}$.

\subsection{A class of noncommutative vector bundles}\label{sec:loctriv}

Suppose $E\to X$ is a vector bundle over a compact manifold.
One can always construct a compact space $Y$ with a quotient map $Y\to X$ such that the pullback of $E$ to $Y$ is trivial. For example, by the shrinking lemma one can find a finite cover $\{K_i\}$ by compact sets where the bundle trivializes, and define $Y$ as the topological disjoint union of the $K_i$'s.

In a dual language, if $\mathscr{E}$ is the $C(X)$-module of continuous sections of the bundle $E$, the $C(Y)$-module of continuous sections of the pullback bundle is $\mathscr{E}\otimes_{C(X)}C(Y)$, and its global triviality means that it is of the form $C(Y)^r$ for some positive $r\in\Z_+$.

In the noncommutative case, we fix an inclusion of unital algebras
$\mathscr{B}\subseteq\mathscr{A}$,
and consider the following type of object $\mathscr{E}$:
\begin{enumerate}
\item\label{en:Eone} $\mathscr{E}$ is a $\mathscr{B}$-bimodule that is finitely generated and projective as a right $\mathscr{B}$-module,

\item\label{en:Etwo} $\mathscr{E}$ trivializes over $\mathscr{A}$. This means that there is an isomorphism $\mathscr{E}\otimes_{\mathscr{B}}\mathscr{A}\cong\mathscr{A}^r$ with a free right $\mathscr{A}$-module ($r\in\Z_+$).

\end{enumerate}
We can identify $e\in\mathscr{E}$ with $e\otimes_{\mathscr{B}}1$ and $\mathscr{E}$ itself with a vector subspace of $\mathscr{A}^r$.
We denote by $\mathrm{Vect}_{\mathscr{A}}(\mathscr{B})$ the collection of all isomorphism classes of bimodules $\mathscr{E}$ satisfying \ref{en:Eone} and \ref{en:Etwo}. To ease the notations,
we denote by the same symbol $\mathscr{E}$ a bimodule and its class in $\mathrm{Vect}_{\mathscr{A}}(\mathscr{B})$. Since isomorphism as bimodules is stronger than isomorphism as right modules, there is a well-defined map from $\mathrm{Vect}_{\mathscr{A}}(\mathscr{B})$ to the collection of isomorphism classes of finitely generated projective right $\mathscr{B}$-modules, and then to $K_0(B)$.

\begin{prop}\label{prop:uv}
For every $\mathscr{E}\in\mathrm{Vect}_{\mathscr{A}}(\mathscr{B})$ there exists an $N\times r$ matrix $u$ and an $r\times N$ matrix $v$ with entries in $\mathscr{A}$ such that:
\begin{enumerate}[label=\textup{(\arabic*)}]
\item $v \, u=1$ \ is the $r\times r$ identity matrix,

\item\label{en:point2} $p:=u \, v$ \ belongs to $M_N(\mathscr{B})$,

\item\label{en:point3} the map \eqref{eq:isoA} is given by the multiplication from the left by the matrix $u$, and the map in \eqref{eq:isoB} is given by the multiplication from the left by the matrix $v$,

\item the two maps at point \ref{en:point3} restrict to isomorphisms $\mathscr{E}\cong p\,\mathscr{B}^N$ with $p$ the idempotent at point \ref{en:point2}.

\end{enumerate}
\end{prop}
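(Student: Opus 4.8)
The plan is to build $u$ and $v$ by applying the base-change functor $-\otimes_{\mathscr{B}}\mathscr{A}$ to the standard splitting maps of the finitely generated projective right $\mathscr{B}$-module $\mathscr{E}$, and then to read them off as matrices with respect to the trivialization over $\mathscr{A}$.

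First I would fix generators $\xi^1,\dots,\xi^N\in\mathscr{E}$ and dual maps $\psi_1,\dots,\psi_N\in\mathscr{E}^\vee$ with $\sum_i\xi^i\psi_i=\id_{\mathscr{E}}$, yielding the right $\mathscr{B}$-linear maps $\psi$ and $\xi$ of \eqref{eq:isoA} and \eqref{eq:isoB} with $\xi\circ\psi=\id_{\mathscr{E}}$, with $p:=\psi\circ\xi\in M_N(\mathscr{B})$, and with $\mathscr{E}\cong p\,\mathscr{B}^N$. Next I would fix the trivialization isomorphism $\Theta:\mathscr{E}\otimes_{\mathscr{B}}\mathscr{A}\xrightarrow{\ \sim\ }\mathscr{A}^r$ of right $\mathscr{A}$-modules provided by assumption \ref{en:Etwo}, and apply $-\otimes_{\mathscr{B}}\mathscr{A}$ to $\psi$ and $\xi$. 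Using the canonical identification $\mathscr{B}^N\otimes_{\mathscr{B}}\mathscr{A}=\mathscr{A}^N$, this produces two right $\mathscr{A}$-linear maps between free modules,
\[
(\psi\otimes1)\circ\Theta^{-1}:\mathscr{A}^r\to\mathscr{A}^N,\qquad
\Theta\circ(\xi\otimes1):\mathscr{A}^N\to\mathscr{A}^r,
\]
which, being maps of free right $\mathscr{A}$-modules, are left multiplication by a unique $N\times r$ matrix $u$ and a unique $r\times N$ matrix $v$ over $\mathscr{A}$ respectively; concretely the $j$-th column of $v$ is $\Theta(\xi^j\otimes1)$.

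The four claims would then follow by functoriality. Point (1) is $vu=\Theta\circ(\xi\otimes1)\circ(\psi\otimes1)\circ\Theta^{-1}=\Theta\circ((\xi\circ\psi)\otimes1)\circ\Theta^{-1}=\id_{\mathscr{A}^r}$, since $\xi\circ\psi=\id_{\mathscr{E}}$. Point (2) is the key step: $uv=(\psi\otimes1)\circ(\xi\otimes1)=(\psi\circ\xi)\otimes1=p\otimes1$, so the a priori $\mathscr{A}$-valued matrix $uv$ coincides with the image of $p\in M_N(\mathscr{B})$ under the entrywise inclusion $M_N(\mathscr{B})\hookrightarrow M_N(\mathscr{A})$, and hence has entries in $\mathscr{B}$. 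For point (3) I would use the injective identification $\mathscr{E}\hookrightarrow\mathscr{A}^r$, $e\mapsto\Theta(e\otimes1)$ (injective because $\mathscr{E}$ is a direct summand of $\mathscr{B}^N$ and $\mathscr{B}\subseteq\mathscr{A}$): left multiplication by $u$ sends $\Theta(e\otimes1)$ to $(\psi\otimes1)(e\otimes1)=\psi(e)\otimes1$, that is to $\psi(e)\in\mathscr{B}^N$, while $v\cdot(b\otimes1)=\Theta(\xi(b)\otimes1)$ recovers $\xi(b)$, so that $u$ and $v$ implement \eqref{eq:isoA} and \eqref{eq:isoB}. Point (4) is then immediate from $\xi\circ\psi=\id_{\mathscr{E}}$ and $\psi\circ\xi=p$, exactly as in the discussion following \eqref{eq:isoB}.

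The only genuinely delicate point is (2): individually $u$ and $v$ have entries in $\mathscr{A}$, and one must be sure that their product lands back in $M_N(\mathscr{B})$. I expect this to be the crux, but it dissolves once $uv$ is recognized as $p\otimes1$ through the functoriality of $-\otimes_{\mathscr{B}}\mathscr{A}$; the remaining work is purely bookkeeping about the identifications $\mathscr{B}^N\otimes_{\mathscr{B}}\mathscr{A}=\mathscr{A}^N$ and the embedding $\mathscr{E}\hookrightarrow\mathscr{A}^r$.
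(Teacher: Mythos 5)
Your proposal is correct and follows essentially the same route as the paper: the paper also takes $v$ to be the matrix whose columns are the images $\Theta(\xi^j\otimes 1)$ of the generators under the trivialization, obtains $u$ as the (inverse of the) extended splitting map so that $vu=1$ with $u$ restricting to $\psi$, and concludes $uv=p$ from $p_i^j=\psi_i(\xi^j)$. Your functorial phrasing via $-\otimes_{\mathscr{B}}\mathscr{A}$ and the explicit justification that $e\mapsto e\otimes_{\mathscr{B}}1$ is injective are just cleaner packagings of the same argument.
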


\begin{proof}
Choose $\xi^1,\ldots,\xi^N\in\mathscr{E}$
and $\psi_1,\ldots,\psi_N\in\mathscr{E}^\vee$ such that $\sum_{i=1}^N\xi^i\psi_i=\id_{\mathscr{E}}$.
Under the identification of $\mathscr{E}$ with a vector subspace of $\mathscr{A}^r$,
the elements $\xi^1,\ldots,\xi^N$ are the columns of an $r\times N$ matrix $v$ with entries in $\mathscr{A}$.

The maps \eqref{eq:isoB} extend to an isomorphism of right $\mathscr{A}$-modules:
\[
p\,\mathscr{A}^N=p\,\mathscr{B}^N\otimes_{\mathscr{B}}\mathscr{A}\longrightarrow\mathscr{E}\otimes_{\mathscr{B}}\mathscr{A}=\mathscr{A}^r \;, \qquad\quad \begin{pmatrix}a_1 \\ \vdots \\ a_N \end{pmatrix}
\longmapsto v\cdot\begin{pmatrix}a_1 \\ \vdots \\ a_N \end{pmatrix} ,
\]
where $p=(p_i^j)\in M_N(\mathscr{B})\subseteq M_N(\mathscr{A})$ is given by $p_i^j=\psi_i(\xi^j)$ as before.
Since the previous map is an isomorphism, there is a (unique) set of elements $\{u^1,\ldots,u^r\}$ in $p\,\mathscr{A}^N$ whose image
is the canonical basis of $\mathscr{A}^r$. If we call $u$ the matrix with columns $u^1,\ldots,u^r$, then $u$ is (uniquely)
defined by the condition that
$v\, u=1$
plus the condition that each column belongs to $p\,\mathscr{A}^N$. The inverse isomorphism
\[
\mathscr{A}^r\longrightarrow p\,\mathscr{A}^N \;, \qquad\qquad \begin{pmatrix}a_1 \\ \vdots \\ a_r \end{pmatrix}
\longmapsto u\begin{pmatrix}a_1 \\ \vdots \\ a_r \end{pmatrix}
\]
is given by the left multiplication by $u$.

Since $v$ restricts to the isomorphism \eqref{eq:isoB} from $p\,\mathscr{B}^N$ to $\mathscr{E}$, it follows that
$u$ restricts to the inverse isomorphism \eqref{eq:isoA} from $\mathscr{E}$ to $p\,\mathscr{B}^N$. Thus,
\[
\begin{pmatrix}\psi_1(e) \\ \vdots \\ \psi_N(e) \end{pmatrix}=u e
\]
for all $e\in\mathscr{E}$. In particular, $p_i^j=\psi_i(\xi^j)=\sum_{k=1}^r u_i^k v_k^j$, that is $p=u\, v$.
\end{proof}

To simplify the discussion, for lack of a better name, we call the pair $(u,v)$ in Proposition \ref{prop:uv} a \emph{trivializing pair} for $\mathscr{E}$ (or more precisely for the right $\mathscr{A}$-module $\mathscr{E}\otimes_{\mathscr{B}}\mathscr{A}$).

\begin{prop}\label{prop:UV}
If $\mathscr{E},\widetilde{\mathscr{E}}\in\mathrm{Vect}_{\mathscr{A}}(\mathscr{B})$, then 
$\mathscr{E}\otimes_{\mathscr{B}}\widetilde{\mathscr{E}}\in\mathrm{Vect}_{\mathscr{A}}(\mathscr{B})$. Moreover,
if $(u,v)$ is a trivializing pair for $\mathscr{E}$ and
$(\widetilde{u},\widetilde{v})$ is a trivializing pair for $\widetilde{\mathscr{E}}$, then a trivializing pair
$(U,V)$ for $\mathscr{E}\otimes_{\mathscr{B}}\widetilde{\mathscr{E}}$ is given by
\begin{equation}\label{eq:UV}
U_{j_1,j_2}^{k_1,k_2}=\widetilde{u}^{k_2}_{j_2}u^{k_1}_{j_1} , \qquad\quad
V_{j_1,j_2}^{k_1,k_2}=v^{k_1}_{j_1}\widetilde{v}^{k_2}_{j_2} ,
\end{equation}
where now rows and columns are labelled by pairs $(j_1,j_2)$ and $(k_1,k_2)$.
\end{prop}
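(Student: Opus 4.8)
\textbf{First assertion (membership in $\mathrm{Vect}_{\mathscr{A}}(\mathscr{B})$).}
The plan is to treat the two assertions separately. That $\mathscr{E}\otimes_{\mathscr{B}}\widetilde{\mathscr{E}}$ is finitely generated and projective as a right $\mathscr{B}$-module is exactly Lemma \ref{lemma:willrephrase}, so only the trivialization over $\mathscr{A}$ remains. I would produce it from the natural chain of right $\mathscr{A}$-module isomorphisms
\[
(\mathscr{E}\otimes_{\mathscr{B}}\widetilde{\mathscr{E}})\otimes_{\mathscr{B}}\mathscr{A}
\cong\mathscr{E}\otimes_{\mathscr{B}}(\widetilde{\mathscr{E}}\otimes_{\mathscr{B}}\mathscr{A})
\cong\mathscr{E}\otimes_{\mathscr{B}}\mathscr{A}^{\widetilde{r}}
\cong(\mathscr{E}\otimes_{\mathscr{B}}\mathscr{A})^{\widetilde{r}}
\cong\mathscr{A}^{r\widetilde{r}} ,
\]
using associativity of $\otimes_{\mathscr{B}}$ and then the trivializations of $\widetilde{\mathscr{E}}$ and of $\mathscr{E}$. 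This simultaneously shows $\mathscr{E}\otimes_{\mathscr{B}}\widetilde{\mathscr{E}}\in\mathrm{Vect}_{\mathscr{A}}(\mathscr{B})$ with trivializing rank $r\widetilde{r}$, which matches the sizes of the matrices $U$ and $V$ in \eqref{eq:UV}.

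\textbf{Second assertion (the explicit pair).}
I would verify that $(U,V)$ satisfies the defining conditions of Proposition \ref{prop:uv}. The identity $V\,U=1$ is a direct index computation: expanding \eqref{eq:UV} gives
\[
(V\,U)_{(a_1,a_2)}^{(b_1,b_2)}=\sum_{c_1,c_2}v^{c_1}_{a_1}\,\widetilde{v}^{c_2}_{a_2}\,\widetilde{u}^{b_2}_{c_2}\,u^{b_1}_{c_1} ,
\]
and since the middle sum $\sum_{c_2}\widetilde{v}^{c_2}_{a_2}\widetilde{u}^{b_2}_{c_2}=(\widetilde{v}\widetilde{u})^{b_2}_{a_2}=\delta^{b_2}_{a_2}$ is scalar, it factors out of the remaining product and leaves $(vu)^{b_1}_{a_1}\delta^{b_2}_{a_2}=\delta^{b_1}_{a_1}\delta^{b_2}_{a_2}$. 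The analogous computation for $P:=U\,V$ collapses instead the inner sum over the first label via $uv=p$, giving
\[
P_{(a_1,a_2)}^{(b_1,b_2)}=\sum_{c}\widetilde{u}^{c}_{a_2}\,p^{b_1}_{a_1}\,\widetilde{v}^{b_2}_{c} .
\]
I would then match this with the projection of $\mathscr{E}\otimes_{\mathscr{B}}\widetilde{\mathscr{E}}$ computed intrinsically in \eqref{eq:wesee}, which, writing the data of the second module with tildes, reads $\widetilde{\psi}_{a_2}\big(p^{b_1}_{a_1}\,\widetilde{\xi}^{b_2}\big)$, recalling that $\widetilde{\psi}_{a_2}$ acts as the $a_2$-th row of $\widetilde{u}$ and that $\widetilde{\xi}^{b_2}$ is the $b_2$-th column of $\widetilde{v}$. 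This identifies $P$ with the genuine projection of the tensor product; in particular it shows $P\in M_{N\widetilde{N}}(\mathscr{B})$, which is not apparent from the displayed formula (whose entries \emph{a priori} only lie in $\mathscr{A}$).

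\textbf{Main obstacle.}
The step I expect to be delicate is precisely this last identification, since it is where the noncommutativity and the bimodule structure interact. The product $\widetilde{u}^{c}_{a_2}\,p^{b_1}_{a_1}\,\widetilde{v}^{b_2}_{c}$ cannot be rearranged to collect $\widetilde{u}\widetilde{v}$, so membership in $\mathscr{B}$ is invisible at the formal level; the point is rather that $p^{b_1}_{a_1}\in\mathscr{B}$ acts on $\widetilde{\xi}^{b_2}\in\widetilde{\mathscr{E}}$ through the \emph{left} $\mathscr{B}$-action, which under the identification $\widetilde{\mathscr{E}}\subseteq\mathscr{A}^{\widetilde{r}}$ is componentwise left multiplication, so that $\big(p^{b_1}_{a_1}\cdot\widetilde{\xi}^{b_2}\big)_{c}=p^{b_1}_{a_1}\widetilde{v}^{b_2}_{c}$, exactly the middle factor above. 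As a conceptual cross-check I would also observe that the columns of $V$ are the images in $\mathscr{A}^{r\widetilde{r}}$ of the generators $\xi^{j_1}\otimes_{\mathscr{B}}\widetilde{\xi}^{j_2}$ of Lemma \ref{lemma:willrephrase}, while the rows of $U$ implement the dual functionals $\psi_{j_1,j_2}$; with this reading $(U,V)$ is literally the trivializing pair that Proposition \ref{prop:uv} associates to these generators, so all four required properties hold by construction.
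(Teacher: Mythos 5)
Your proof is correct and follows essentially the same route as the paper: membership in $\mathrm{Vect}_{\mathscr{A}}(\mathscr{B})$ is obtained via a chain of right $\mathscr{A}$-module isomorphisms reducing to the trivializations of the two factors, and the pair $(U,V)$ is identified by matching it against the generators $\xi^{j_1}\otimes_{\mathscr{B}}\widetilde{\xi}^{j_2}$ and functionals $\psi_{j_1,j_2}$ of Lemma \ref{lemma:willrephrase}. The only cosmetic difference is that you verify $VU=1$ and $UV=P$ by direct index computation, whereas the paper reads off $U$ from the functional description and deduces $V$ from $VU=1$; the ``delicate point'' you flag (that the left $\mathscr{B}$-action is componentwise left multiplication under the embedding $\widetilde{\mathscr{E}}\subseteq\mathscr{A}^{\widetilde{r}}$) is precisely the identification the paper itself uses implicitly in \eqref{eq:wesee} and \eqref{eq:capitalP}.
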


\begin{proof}
The composition of isomorphisms (of right $\mathscr{A}$-modules)
\[
(\mathscr{E}\otimes_{\mathscr{B}}\widetilde{\mathscr{E}})\otimes_{\mathscr{B}}\mathscr{A}
\cong
(\mathscr{E}\otimes_{\mathscr{B}}\mathscr{A})\otimes_{\mathscr{A}}(\widetilde{\mathscr{E}}\otimes_{\mathscr{B}}\mathscr{A})
\longrightarrow\mathscr{A}^r\otimes_{\mathscr{A}}\mathscr{A}^{\widetilde{r}}\cong \mathscr{A}^{r\widetilde{r}}
\]
is obviously an isomorphism, proving that $\mathscr{E}\otimes_{\mathscr{B}}\widetilde{\mathscr{E}}\in\mathrm{Vect}_{\mathscr{A}}(\mathscr{B})$. The rest of the proof is a simple rephrasing of the proof of Lemma \ref{lemma:willrephrase}. For example the maps
\[
\psi_{j_1,j_2}:\mathscr{E}\otimes_{\mathscr{B}}\widetilde{\mathscr{E}}\longrightarrow\mathscr{B}
\]
are now given by the restriction of the maps
\begin{align*}
(\mathscr{E}\otimes_{\mathscr{B}}\widetilde{\mathscr{E}})\otimes_{\mathscr{B}}\mathscr{A}
&\cong
(\mathscr{E}\otimes_{\mathscr{B}}\mathscr{A})\otimes_{\mathscr{A}}(\widetilde{\mathscr{E}}\otimes_{\mathscr{B}}\mathscr{A})
\longrightarrow\mathscr{A} ,
\\
e\otimes_{\mathscr{A}}e' &\longmapsto \sum_{i_1=1}^{r}\sum_{i_2=1}^{\widetilde{r}}\widetilde{u}^{i_2}_{j_2}u^{i_1}_{j_1}e_{i_1}e'_{i_2} ,
\end{align*}
where $e=(e_i)\in\mathscr{A}^r$ and $e'=(e'_i)\in\mathscr{A}^{\widetilde{r}}$. This proves the formula \eqref{eq:UV} for $U$.
Columns of the matrix $V$ in \eqref{eq:UV}, on the other hand, are mapped to the elements of the canonical basis of $\mathscr{A}^{r\widetilde{r}}$. This is a consequence of the identity $VU=1$ (which can be easily checked), and proves that the formula \eqref{eq:UV} for $V$ is also correct.
\end{proof}

Formulas \eqref{eq:wesee} and \eqref{eq:matrixtrace} now take the following form.

\begin{prop}
Let $\mathscr{E},\widetilde{\mathscr{E}}\in\mathrm{Vect}_{\mathscr{A}}(\mathscr{B})$, 
$\mathscr{E}\cong p\,\mathscr{B}^N$ as a right $\mathscr{B}$-module, and let
$(\widetilde{u},\widetilde{v})$ be a trivializing pair for $\widetilde{\mathscr{E}}$.
Then, $\mathscr{E}\otimes_{\mathscr{B}}\widetilde{\mathscr{E}}\cong P\,\mathscr{B}^{N\widetilde{N}}$
where $P=(p_{j_1,j_2}^{k_1,k_2})$ has matrix entries
\begin{equation}\label{eq:capitalP}
P_{j_1,j_2}^{k_1,k_2}=\sum_{i=1}^{\widetilde{r}}\widetilde{u}^{i}_{j_2}p_{j_1}^{k_1}\widetilde{v}^{k_2}_{i} .
\end{equation}
In particular,
\begin{equation}\label{eq:capitalTrace}
\tr\,P=\sum_{i=1}^{\widetilde{r}}\sum_{j=1}^{\widetilde{N}}\widetilde{u}^{i}_{j}(\tr\,p)\widetilde{v}^{j}_{i} .
\end{equation}
\end{prop}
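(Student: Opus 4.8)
The plan is to obtain both formulas as a direct specialization of \eqref{eq:wesee} and \eqref{eq:matrixtrace} to the case $\mathscr{E}'=\widetilde{\mathscr{E}}$, reading off the generators and dual maps of $\widetilde{\mathscr{E}}$ from the trivializing pair $(\widetilde{u},\widetilde{v})$ furnished by Proposition~\ref{prop:uv}. First I would fix for $\mathscr{E}$ the finitely many elements $\xi^1,\dots,\xi^N$ and dual maps $\psi_1,\dots,\psi_N$ realizing the isomorphism $\mathscr{E}\cong p\,\mathscr{B}^N$, so that $p_{j_1}^{k_1}=\psi_{j_1}(\xi^{k_1})$. For $\widetilde{\mathscr{E}}$ I would instead use the description coming from the trivializing pair: under the identification $\widetilde{\mathscr{E}}\subseteq\mathscr{A}^{\widetilde{r}}$, the generators $\widetilde{\xi}^{\,k}$ are the columns of $\widetilde{v}$, that is $(\widetilde{\xi}^{\,k})_i=\widetilde{v}^{\,k}_i$, while the dual maps are the restrictions of $e\mapsto(\widetilde{u}\,e)_j=\sum_{i=1}^{\widetilde{r}}\widetilde{u}^{\,i}_j\,e_i$ (as recorded in Proposition~\ref{prop:uv}).

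Next I would substitute these data into \eqref{eq:wesee}, which expresses the entries of the projection for $\mathscr{E}\otimes_{\mathscr{B}}\widetilde{\mathscr{E}}$ as $P_{j_1,j_2}^{k_1,k_2}=\widetilde{\psi}_{j_2}\big(p_{j_1}^{k_1}\,\widetilde{\xi}^{\,k_2}\big)$. Here $p_{j_1}^{k_1}\in\mathscr{B}$ acts on $\widetilde{\xi}^{\,k_2}$ on the left; since $\widetilde{\mathscr{E}}$ sits inside $\mathscr{A}^{\widetilde{r}}$ as a sub-bimodule with the componentwise actions inherited from $\mathscr{A}$, this left action is componentwise, so $\big(p_{j_1}^{k_1}\widetilde{\xi}^{\,k_2}\big)_i=p_{j_1}^{k_1}\widetilde{v}^{\,k_2}_i$. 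Applying $\widetilde{\psi}_{j_2}$ then gives $P_{j_1,j_2}^{k_1,k_2}=\sum_{i=1}^{\widetilde{r}}\widetilde{u}^{\,i}_{j_2}\,p_{j_1}^{k_1}\,\widetilde{v}^{\,k_2}_i$, which is precisely \eqref{eq:capitalP}. Formula \eqref{eq:capitalTrace} then follows by setting $k_1=j_1$, $k_2=j_2$ and summing over the diagonal: the sum over $j_1$ collapses $\sum_{j_1}p_{j_1}^{j_1}$ into $\tr\,p$, leaving $\tr\,P=\sum_{j}\sum_i\widetilde{u}^{\,i}_{j}(\tr\,p)\,\widetilde{v}^{\,j}_i$, exactly \eqref{eq:matrixtrace} rewritten through the trivializing pair.

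The computation is essentially a reindexing of Lemma~\ref{lemma:willrephrase}, so the only genuine point to verify — and the one I expect to be the main (and rather minor) obstacle — is the compatibility just used: that the abstract left multiplication by $p_{j_1}^{k_1}\in\mathscr{B}$ appearing in \eqref{eq:wesee} coincides with componentwise left multiplication in $\mathscr{A}^{\widetilde{r}}$ under the embedding $\widetilde{\mathscr{E}}\subseteq\mathscr{A}^{\widetilde{r}}$. Once this is recorded, the substitution of the trivializing-pair expressions for $\widetilde{\xi}^{\,k}$ and $\widetilde{\psi}_j$ produces \eqref{eq:capitalP} and, upon taking the matrix trace, \eqref{eq:capitalTrace}.
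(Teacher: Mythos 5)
Your proof is correct and follows essentially the same route as the paper: the paper simply invokes Proposition \ref{prop:UV} and the identity $P=UV$, which amounts to exactly the substitution of the trivializing-pair data into \eqref{eq:wesee} and \eqref{eq:matrixtrace} that you carry out explicitly (Proposition \ref{prop:UV} being itself a rephrasing of Lemma \ref{lemma:willrephrase}). The compatibility you flag --- that left multiplication by $p_{j_1}^{k_1}\in\mathscr{B}$ acts componentwise under the embedding $\widetilde{\mathscr{E}}\subseteq\mathscr{A}^{\widetilde{r}}$ --- is used implicitly in the paper's own proof of Proposition \ref{prop:UV} as well, so your argument introduces no new gap.
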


\begin{proof}
This is a consequence of \eqref{eq:UV} and of $P=UV$.
\end{proof}

\subsection{On a multiplicative structure of K-theory}\label{sec:3.3}
In this section we assume that we have an inclusion $\mathscr{B}\subseteq\mathscr{A}$ of unital *-algebras, and that the C*-enveloping algebra $B$ of $\mathscr{B}$ exists.
There is an obvious map
\begin{equation}\label{eq:themap}
\mathrm{Vect}_{\mathscr{A}}(\mathscr{B})\longrightarrow K_0(B) .
\end{equation}
If $\mathscr{E}\in\mathrm{Vect}_{\mathscr{A}}(\mathscr{B})$, there is $p\in M_N(\mathscr{B})$ such that $\mathscr{E}\cong p\,\mathscr{B}^N$ as right $\mathscr{B}$-modules. With the *-homomorphism $\mathscr{B}\to B$ we transform $p$ into an idempotent in $M_N(B)$, and denote by $[\mathscr{E}]\in K_0(B)$ the K-theory class of this idempotent.

The map \eqref{eq:themap} is a morphism of abelian groups (the operation being the direct sum), and sends the trivial bimodule $\mathscr{B}$ into the class $1\in K_0(B)$.

\begin{prop}\label{prop:undertheassumption}
Assume that there is a character $\varepsilon:\mathscr{A}\to\C$ and
a bounded unital *-representation $\pi:\mathscr{A}\to\mathcal{B}(\mathcal{H})$ such that
\[
\pi(b)-\varepsilon(b)\in\mathcal{L}^1(\mathcal{H}) \;,\quad\forall\;b\in\mathscr{B}.
\]
Then:
\begin{enumerate}
\item\label{en:provaA} There is a homomorphism of unital semirings
\[
\mathrm{ch}:\big(\mathrm{Vect}_{\mathscr{A}}(\mathscr{B}),\oplus,\otimes\big)  \to\Z[t]/(t^2) ,
\quad\quad
\mathrm{ch}(\mathscr{E})=\mathrm{ch}_0(\mathscr{E})+\mathrm{ch}_1(\mathscr{E})t ,
\]
given, for any $\mathscr{E}\cong p\,\mathscr{B}^N$, by the formulas
\begin{equation} \label{ch01}
\mathrm{ch}_0(\mathscr{E}):=\varepsilon(\tr\,p) , \qquad\quad
\mathrm{ch}_1(\mathscr{E})=\tr_{\mathcal{H}}\big(\pi(\tr\,p)-\varepsilon(\tr\,p)\big) .
\end{equation}

\item\label{en:provaB}
The underlying morphism of abelian semigroups is the composition of \eqref{eq:themap} with a homomorphism $[\mathrm{ch}]:K_0(B)\to\Z[t]/(t^2)$.

\item\label{en:provaC}
If $(u,v)$ is a trivializing pair for $\mathscr{E}$, then the number of column of $u$ is $r=\mathrm{ch}_0(\mathscr{E})$ and thus depends only on the K-theory class of $\mathscr{E}$.
\end{enumerate}
\end{prop}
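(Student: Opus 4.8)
The plan is to identify the two components $\mathrm{ch}_0$ and $\mathrm{ch}_1$ with integer-valued homomorphisms already available on $K_0(B)$, and then to deduce multiplicativity from the trace formula \eqref{eq:capitalTrace}. First I would extend $\varepsilon$ and $\pi$ to the enveloping algebra: $\varepsilon$ is a one-dimensional $*$-representation of $\mathscr{B}$ and $\pi|_{\mathscr{B}}$ is a $*$-representation of $\mathscr{B}$ on $\mathcal{H}$, so the universal property of $B$ provides bounded extensions $\bar\varepsilon\colon B\to\C$ and $\bar\pi\colon B\to\mathcal{B}(\mathcal{H})$. Feeding the pair $\pi_1=\bar\pi$, $\pi_2=\bar\varepsilon\cdot 1_{\mathcal{H}}$ into the construction of Section~\ref{sec:3.1} --- the hypothesis $\pi(b)-\varepsilon(b)\in\mathcal{L}^1(\mathcal{H})$ for $b\in\mathscr{B}$ being exactly the required $1$-summability --- produces $\varphi\in K_0(B)^\vee$ with $\varphi([p])=\tr_{\mathcal{H}}\big(\pi(\tr p)-\varepsilon(\tr p)\big)=\mathrm{ch}_1(\mathscr{E})$. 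Similarly, the induced map $K_0(\bar\varepsilon)\colon K_0(B)\to K_0(\C)\cong\Z$ sends $[p]$ to the rank of the scalar idempotent $\varepsilon(p)\in M_N(\C)$, that is to $\varepsilon(\tr p)=\mathrm{ch}_0(\mathscr{E})$. This realizes $\mathrm{ch}_0$ and $\mathrm{ch}_1$ as the restrictions along \eqref{eq:themap} of the group homomorphisms $K_0(\bar\varepsilon)$ and $\varphi$; in particular both are integer-valued, independent of the representative $p$, and additive under $\oplus$, and setting $[\mathrm{ch}]:=K_0(\bar\varepsilon)+\varphi\,t$ gives the homomorphism $K_0(B)\to\Z[t]/(t^2)$ of part~\ref{en:provaB}.

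Part~\ref{en:provaC} and unitality come next. Taking a trivializing pair $(u,v)$ for $\mathscr{E}$ with $p=uv$ and $vu=1_r$ (Proposition~\ref{prop:uv}) and using that $\varepsilon$ is multiplicative, I get $\varepsilon(vu)=\varepsilon(v)\varepsilon(u)=1_r$, whence by cyclicity of the matrix trace $\mathrm{ch}_0(\mathscr{E})=\varepsilon(\tr p)=\tr\big(\varepsilon(u)\varepsilon(v)\big)=\tr\big(\varepsilon(v)\varepsilon(u)\big)=r$; together with the previous paragraph this shows $r$ depends only on the class of $\mathscr{E}$ in $K_0(B)$, proving \ref{en:provaC}. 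For the unit bimodule $\mathscr{E}=\mathscr{B}$ one has $p=1$, so $\mathrm{ch}_0=\varepsilon(1)=1$ and $\mathrm{ch}_1=\tr_{\mathcal{H}}\big(\pi(1)-1\big)=0$, i.e.\ $\mathrm{ch}(\mathscr{B})=1$.

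The remaining and most laborious point --- and the one I expect to be the main obstacle --- is multiplicativity of $\mathrm{ch}$ in part~\ref{en:provaA}, namely $\mathrm{ch}(\mathscr{E}\otimes_{\mathscr{B}}\widetilde{\mathscr{E}})=\mathrm{ch}(\mathscr{E})\,\mathrm{ch}(\widetilde{\mathscr{E}})$ in $\Z[t]/(t^2)$. Fixing $\mathscr{E}\cong p\,\mathscr{B}^N$ and a trivializing pair $(\widetilde u,\widetilde v)$ for $\widetilde{\mathscr{E}}$ with $\widetilde p=\widetilde u\widetilde v$, I start from \eqref{eq:capitalTrace}, $\tr P=\sum_{i,j}\widetilde u^{i}_{j}\,(\tr p)\,\widetilde v^{j}_{i}$. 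Applying the character $\varepsilon$ and using $\widetilde v\widetilde u=1$ collapses this to $\mathrm{ch}_0(\mathscr{E}\otimes\widetilde{\mathscr{E}})=\varepsilon(\tr p)\,\widetilde r=\mathrm{ch}_0(\mathscr{E})\,\mathrm{ch}_0(\widetilde{\mathscr{E}})$, the degree-zero part of the product. For the coefficient of $t$ I would split $\pi(\tr p)=\varepsilon(\tr p)\,1_{\mathcal{H}}+T$ with $T:=\pi(\tr p)-\varepsilon(\tr p)\in\mathcal{L}^1(\mathcal{H})$ and substitute into $\pi(\tr P)=\sum_{i,j}\pi(\widetilde u^{i}_{j})\,\pi(\tr p)\,\pi(\widetilde v^{j}_{i})$. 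The scalar part uses $\sum_{i,j}\widetilde u^{i}_{j}\widetilde v^{j}_{i}=\tr\widetilde p$ and multiplicativity of $\pi$ to produce $\varepsilon(\tr p)\big(\pi(\tr\widetilde p)-\varepsilon(\tr\widetilde p)\big)$ after subtracting $\varepsilon(\tr P)$, contributing $\mathrm{ch}_0(\mathscr{E})\,\mathrm{ch}_1(\widetilde{\mathscr{E}})$ to $\tr_{\mathcal{H}}$. The remaining part $\sum_{i,j}\pi(\widetilde u^{i}_{j})\,T\,\pi(\widetilde v^{j}_{i})$ is handled by cyclicity of $\tr_{\mathcal{H}}$ (valid since $T\in\mathcal{L}^1$ and the $\pi(\widetilde u^i_j),\pi(\widetilde v^j_i)$ are bounded), which rewrites it as $\tr_{\mathcal{H}}\big(T\,\pi(\textstyle\sum_{i,j}\widetilde v^{j}_{i}\widetilde u^{i}_{j})\big)$; since $\sum_{j}\widetilde v^{j}_{i}\widetilde u^{i'}_{j}=(\widetilde v\widetilde u)^{i'}_{i}=\delta^{i'}_{i}$ one gets $\sum_{i,j}\widetilde v^{j}_{i}\widetilde u^{i}_{j}=\widetilde r\cdot 1$, so this term equals $\widetilde r\,\tr_{\mathcal{H}}(T)=\mathrm{ch}_0(\widetilde{\mathscr{E}})\,\mathrm{ch}_1(\mathscr{E})$. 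Adding the two contributions yields $\mathrm{ch}_1(\mathscr{E}\otimes\widetilde{\mathscr{E}})=\mathrm{ch}_0(\mathscr{E})\,\mathrm{ch}_1(\widetilde{\mathscr{E}})+\mathrm{ch}_0(\widetilde{\mathscr{E}})\,\mathrm{ch}_1(\mathscr{E})$, exactly the coefficient of $t$ in the product $\mathrm{ch}(\mathscr{E})\,\mathrm{ch}(\widetilde{\mathscr{E}})$ modulo $t^2$. The genuinely delicate points are the index bookkeeping dictated by \eqref{eq:capitalTrace} and the need to check that every argument of $\tr_{\mathcal{H}}$ lies in $\mathcal{L}^1(\mathcal{H})$ --- which is where the ideal property of the trace class, applied to $T$, is indispensable for the manipulations above to be legitimate.
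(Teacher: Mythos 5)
Your proposal is correct and follows essentially the same route as the paper: extend $\varepsilon$ and $\pi$ to $B$, realize $\mathrm{ch}_0$ and $\mathrm{ch}_1$ as group homomorphisms on $K_0(B)$ (the paper phrases both as indices of $1$-summable Fredholm modules, you phrase $\mathrm{ch}_0$ as $K_0(\bar\varepsilon)$, which is the same map), prove $\mathrm{ch}_0(\mathscr{E})=r$ by commuting the scalars $\varepsilon(u),\varepsilon(v)$ under the matrix trace, and derive \eqref{eq:provaB} from \eqref{eq:capitalTrace} by splitting $\pi(\tr p)=r+T$ with $T\in\mathcal{L}^1(\mathcal{H})$ and using cyclicity of $\tr_{\mathcal{H}}$ together with $\widetilde{v}\widetilde{u}=1_{\widetilde{r}}$. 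The decomposition of $\pi(\tr P)-\varepsilon(\tr P)$ you obtain is exactly the one in the paper's proof, so no further comparison is needed.
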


\begin{proof}
$[\mathrm{ch}_0]$ and $[\mathrm{ch}_1]$ are the index maps of the Fredholm modules associated with the pair or representations $(\varepsilon,0)$ on $\C$ and $(\pi,\varepsilon)$ on $\mathcal{H}$. 
Here $\varepsilon(b)$ acts on $\mathcal{H}$ as a scalar multiple of the identity.
Note that the representations $\varepsilon$ and $\pi$ extend from $\mathscr{B}$ to the C*-enveloping algebra $B$, due to the universal property of the latter.

We have to prove \ref{en:provaC}, and that the map $\mathrm{ch}$ is compatible with the tensor product of elements of $\mathrm{Vect}_{\mathscr{A}}(\mathscr{B})$, which in components means that
\begin{align}
\mathrm{ch}_0(\mathscr{E}\otimes_{\mathscr{B}}\widetilde{\mathscr{E}}) &=\mathrm{ch}_0(\mathscr{E})\mathrm{ch}_0(\widetilde{\mathscr{E}})
\; , \label{eq:provaA} \\
\mathrm{ch}_1(\mathscr{E}\otimes_{\mathscr{B}}\widetilde{\mathscr{E}}) &=
\mathrm{ch}_0(\mathscr{E})\mathrm{ch}_1(\widetilde{\mathscr{E}})+\mathrm{ch}_1(\mathscr{E})\mathrm{ch}_0(\widetilde{\mathscr{E}})
\; , \label{eq:provaB}
\end{align}
for all $\mathscr{E},\widetilde{\mathscr{E}}\in\mathrm{Vect}_{\mathscr{A}}(\mathscr{B})$.
To do this, let us choose trivializing pairs $(u,v)$ for $\mathscr{E}$ and $(\widetilde{u},\widetilde{v})$ of $\widetilde{\mathscr{E}}$. Let $p:=uv$ and remember that
$\mathscr{E}\otimes_{\mathscr{B}}\widetilde{\mathscr{E}}\cong P\,\mathscr{B}^{N\widetilde{N}}$ with idempotent $P=UV$ given by \eqref{eq:capitalP}.

Note that \eqref{eq:provaA} follows from \ref{en:provaC} and from the formula \eqref{eq:UV}, saying that $U$ has $r\widetilde{r}$ columns, where $r$ is the number of columns of $u$ and $\widetilde{r}$ the number of columns of $\widetilde{u}$.
It remains to prove \ref{en:provaC} and \eqref{eq:provaB}.

Firstly, we have
\begin{align}\label{rhor}
\mathrm{ch}_0(\mathscr{E})& =\varepsilon(\tr\,p)=
\sum_{i=1}^r\sum_{j=1}^n\varepsilon(u_i^jv^i_j)=
\sum_{i=1}^r\sum_{j=1}^n\varepsilon(u_i^j)\varepsilon(v^i_j)  \\
& =\sum_{i=1}^r\sum_{j=1}^n\varepsilon(v^i_j)\varepsilon(u_i^j)=
\sum_{i=1}^r\sum_{j=1}^n\varepsilon(v^i_ju_i^j)=
\varepsilon(\tr\,vu)=\varepsilon(\tr\,1_r)=r , \nonumber
\end{align}
where we use the fact that $\varepsilon$ maps to $\C$, so that the factors in the above products can be reordered.

Next, from \eqref{eq:capitalTrace} we get the identity:
\[
\pi(\tr\,P)-\varepsilon(\tr\,P) =
\sum_{i=1}^{\widetilde{r}}\sum_{j=1}^{\widetilde{N}}\pi(\widetilde{u}^{i}_{j})\big(\pi(\tr\,p)-r\big)\pi(\widetilde{v}^{j}_{i})+r\big(\pi(\tr\,\widetilde{p})-\widetilde{r}\big) \;.
\]
where $\widetilde{p}=\widetilde{u}\widetilde{v}$. Since $\pi(\tr\,p)-r$ and $\pi(\tr\,\widetilde{p})-\widetilde{r}$ are of trace class,
each summand in the first line is of trace class, and the term in the second line is of trace class as well.
If we apply $\tr_{\mathcal{H}}$ to both sides of the equality and use the cyclic property of the trace we find:
\begin{align*}
\mathrm{ch}_1(\mathscr{E}\otimes_{\mathscr{B}}\widetilde{\mathscr{E}})
&=\sum_{i=1}^{\widetilde{r}}\sum_{j=1}^{\widetilde{N}}\tr_{\mathcal{H}}\pi(\widetilde{u}^{i}_{j})\big(\pi(\tr\,p)-r\big)\pi(\widetilde{v}^{j}_{i})+r\,\tr_{\mathcal{H}}\big(\pi(\tr\,\widetilde{p})-\widetilde{r}\big) \\
&=\sum_{i=1}^{\widetilde{r}}\sum_{j=1}^{\widetilde{N}}\tr_{\mathcal{H}}\pi(\widetilde{v}^{j}_{i})\pi(\widetilde{u}^{i}_{j})\big(\pi(\tr\,p)-r\big)+r\,\mathrm{ch}_1(\widetilde{\mathscr{E}}) \\
&=\tr_{\mathcal{H}}\pi(\tr\,\widetilde{v}\widetilde{u})\big(\pi(\tr\,p)-r\big)+r\,\mathrm{ch}_1(\widetilde{\mathscr{E}}) \;.
\end{align*}
But $\pi(\tr\,\widetilde{v}\widetilde{u})=\pi(\tr\,1_{\widetilde{r}})=\widetilde{r}$ is a number, so
\begin{align*}
\mathrm{ch}_1(\mathscr{E}\otimes_{\mathscr{B}}\widetilde{\mathscr{E}})
&=\widetilde{r}\,\tr_{\mathcal{H}}\big(\pi(\tr\,p)-r\big)+r\,\mathrm{ch}_1(\widetilde{\mathscr{E}}) \\
&=\widetilde{r}\,\mathrm{ch}_1(\mathscr{E})+r\,\mathrm{ch}_1(\widetilde{\mathscr{E}}) \;,
\end{align*}
which is exactly \eqref{eq:provaB}.
\end{proof}

\begin{cor}\label{cor:usingcor}
Under the assumptions of Proposition \ref{prop:undertheassumption},
for every $\mathscr{E}\in\mathrm{Vect}_{\mathscr{A}}(\mathscr{B})$ and every positive integer $k$, for the module
\[
\mathscr{E}^{\otimes_{\mathscr{B}}k}:=\underbrace{
\mathscr{E}\otimes_{\mathscr{B}}\mathscr{E}\otimes_{\mathscr{B}}\ldots\otimes_{\mathscr{B}}\mathscr{E}}_{k\text{ times}}\vspace{-5pt}
\]
one has
\[
\mathrm{ch}_0(\mathscr{E}^{\otimes_{\mathscr{B}}k})=\mathrm{ch}_0(\mathscr{E})^k \;,\qquad
\mathrm{ch}_1(\mathscr{E}^{\otimes_{\mathscr{B}}k})=k\,\mathrm{ch}_0(\mathscr{E})^{k-1}\mathrm{ch}_1(\mathscr{E}) \;.
\]
\end{cor}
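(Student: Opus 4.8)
The plan is to deduce both formulas from the single fact that $\mathrm{ch}$ is a homomorphism of unital semirings into the ring of dual numbers, which is exactly the content of Proposition~\ref{prop:undertheassumption}\ref{en:provaA}. First I would note that iterating Proposition~\ref{prop:UV} shows $\mathscr{E}^{\otimes_{\mathscr{B}}k}\in\mathrm{Vect}_{\mathscr{A}}(\mathscr{B})$, so that the class is well defined and $\mathrm{ch}$ may be applied to it; since the tensor product over $\mathscr{B}$ is associative and $\mathrm{ch}$ is multiplicative, one gets at once
\[
\mathrm{ch}(\mathscr{E}^{\otimes_{\mathscr{B}}k})=\mathrm{ch}(\mathscr{E})^k
\]
in $\Z[t]/(t^2)$.

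It then remains only to compute this $k$-th power. Writing $a:=\mathrm{ch}_0(\mathscr{E})$ and $b:=\mathrm{ch}_1(\mathscr{E})$, so that $\mathrm{ch}(\mathscr{E})=a+bt$, the binomial theorem in $\Z[t]/(t^2)$ gives
\[
(a+bt)^k=\sum_{j=0}^k\binom{k}{j}a^{k-j}b^jt^j=a^k+k\,a^{k-1}bt ,
\]
because every monomial $t^j$ with $j\geq 2$ vanishes. Comparing the coefficient of $1$ and of $t$ with the expansion $\mathrm{ch}(\mathscr{E}^{\otimes_{\mathscr{B}}k})=\mathrm{ch}_0(\mathscr{E}^{\otimes_{\mathscr{B}}k})+\mathrm{ch}_1(\mathscr{E}^{\otimes_{\mathscr{B}}k})\,t$ yields exactly the two claimed identities.

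As an alternative I would run an induction on $k$ directly from the component formulas \eqref{eq:provaA} and \eqref{eq:provaB}: the case $k=1$ is trivial, and applying these to $\mathscr{E}\otimes_{\mathscr{B}}\mathscr{E}^{\otimes_{\mathscr{B}}(k-1)}$ together with the inductive hypothesis reproduces $a^k$ for $\mathrm{ch}_0$ and $a\,(k-1)a^{k-2}b+b\,a^{k-1}=k\,a^{k-1}b$ for $\mathrm{ch}_1$. There is no genuine obstacle in either route; the only point deserving a line of justification is the well-definedness of $\mathscr{E}^{\otimes_{\mathscr{B}}k}$ as a class in $\mathrm{Vect}_{\mathscr{A}}(\mathscr{B})$ (associativity of $\otimes_{\mathscr{B}}$ and stability of the class under tensoring, both furnished by Proposition~\ref{prop:UV}), after which the statement is a purely formal consequence of the multiplicativity of $\mathrm{ch}$.
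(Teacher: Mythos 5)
Your proposal is correct and matches the paper, whose proof is simply that the claim is an immediate consequence of the component formulas \eqref{eq:provaA} and \eqref{eq:provaB}; your induction on $k$ is exactly that argument, and your first route (expanding $\mathrm{ch}(\mathscr{E})^k$ in $\Z[t]/(t^2)$) is the same computation packaged through the semiring homomorphism. No gaps.
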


\begin{proof}
It is an immediate consequence of \eqref{eq:provaA} and \eqref{eq:provaB}.
\end{proof}

Under the above assumptions, we have the following analogue of Proposition \ref{prop:next}.

\begin{prop}\label{prop:2q}
Under the assumptions of Proposition \ref{prop:undertheassumption},
if $K_0(B)\cong\Z^2$ and there exists $\mathscr{E}\in\mathrm{Vect}_{\mathscr{A}}(\mathscr{B})$ with
$\mathrm{ch}_1(\mathscr{E})=\pm 1$, then $(1,[\mathscr{E}])$ is a basis of $K_0(B)$ and
\begin{equation}\label{eq:identity}
r^2-2r[\mathscr{E}]+[\mathscr{E}\otimes_{\mathscr{B}}\mathscr{E}]=0 ,
\end{equation}
where $r:=\mathrm{ch}_0(\mathscr{E})$.
\end{prop}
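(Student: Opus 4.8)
The plan is to adapt the proof of Proposition \ref{prop:next} to this noncommutative setting: the components $[\mathrm{ch}_0],[\mathrm{ch}_1]$ of the group homomorphism $[\mathrm{ch}]:K_0(B)\to\Z[t]/(t^2)$ of Proposition \ref{prop:undertheassumption}\ref{en:provaB} play the role of the rank and Chern number, and the identity \eqref{eq:identity} will follow once $[\mathrm{ch}]$ is shown to be injective.

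First I would evaluate the two functionals $[\mathrm{ch}_0],[\mathrm{ch}_1]\in K_0(B)^\vee$ on the two candidate generators. On the unit $1=[\mathscr{B}]$, represented by the projection $1\in M_1(\mathscr{B})$, the formulas \eqref{ch01} give $[\mathrm{ch}_0](1)=\varepsilon(1)=1$ and $[\mathrm{ch}_1](1)=\tr_{\mathcal{H}}(\pi(1)-\varepsilon(1))=0$; on $[\mathscr{E}]$ they give $[\mathrm{ch}_0]([\mathscr{E}])=r$ and $[\mathrm{ch}_1]([\mathscr{E}])=\pm1$ by hypothesis. The transition matrix
\[
\mat{c}{ [\mathrm{ch}_0](1) & [\mathrm{ch}_1](1) \\ [\mathrm{ch}_0]([\mathscr{E}]) & [\mathrm{ch}_1]([\mathscr{E}]) }=\mat{c}{ 1 & 0 \\ r & \pm1 }
\]
has determinant $\pm1$, hence is invertible over $\Z$. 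Exactly as in the proof of Proposition \ref{prop:next}, Lemma \ref{lemma:1} then shows that $(1,[\mathscr{E}])$ is a basis of $K_0(B)\cong\Z^2$ and that $([\mathrm{ch}_0],[\mathrm{ch}_1])$ is a basis of $K_0(B)^\vee$. Consequently $[\mathrm{ch}]$ carries the basis $(1,[\mathscr{E}])$ to the basis $(1,r\pm t)$ of $\Z[t]/(t^2)$, so it is an isomorphism of abelian groups, and in particular injective.

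To establish \eqref{eq:identity} I would apply $[\mathrm{ch}]$ to its left-hand side and verify that the image vanishes. The essential observation is that, although $K_0(B)$ carries no ring structure, $[\mathscr{E}\otimes_{\mathscr{B}}\mathscr{E}]$ is the image of the genuine bimodule $\mathscr{E}\otimes_{\mathscr{B}}\mathscr{E}\in\mathrm{Vect}_{\mathscr{A}}(\mathscr{B})$, so I may use multiplicativity of $\mathrm{ch}$ on the semiring $\mathrm{Vect}_{\mathscr{A}}(\mathscr{B})$ (Proposition \ref{prop:undertheassumption}\ref{en:provaA}, or Corollary \ref{cor:usingcor} with $k=2$) to compute
\[
[\mathrm{ch}]\big([\mathscr{E}\otimes_{\mathscr{B}}\mathscr{E}]\big)=\mathrm{ch}(\mathscr{E})^2=(r\pm t)^2=r^2\pm2r\,t ,
\]
using $t^2=0$. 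Since $[\mathrm{ch}](1)=1$ and $[\mathrm{ch}]([\mathscr{E}])=r\pm t$, the $\Z$-linearity of $[\mathrm{ch}]$ gives
\[
[\mathrm{ch}]\big(r^2-2r[\mathscr{E}]+[\mathscr{E}\otimes_{\mathscr{B}}\mathscr{E}]\big)=r^2-2r(r\pm t)+(r^2\pm2r\,t)=0 ,
\]
and injectivity of $[\mathrm{ch}]$ then forces the bracketed element to vanish in $K_0(B)$, which is \eqref{eq:identity}.

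The one point I would watch most carefully is to keep the additive and multiplicative structures apart: multiplicativity of the character map is available only on the semiring $\mathrm{Vect}_{\mathscr{A}}(\mathscr{B})$, never on $K_0(B)$ itself, so the squaring $\mathrm{ch}(\mathscr{E})^2$ must be routed through the bimodule tensor product rather than through any (non-existent) product on K-theory. The unimodularity of the transition matrix is what promotes the group homomorphism $[\mathrm{ch}]$ to an isomorphism, and it is precisely this injectivity that lets me transport the vanishing from $\Z[t]/(t^2)$ back to $K_0(B)$; the remaining manipulations are routine linear algebra.
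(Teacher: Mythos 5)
Your proposal is correct and follows essentially the same route as the paper: the unimodular transition matrix plus Lemma \ref{lemma:1} gives the basis claim, and then the identity is verified by applying the index pairings to the left-hand side, with Corollary \ref{cor:usingcor} supplying $\mathrm{ch}_0(\mathscr{E}\otimes_{\mathscr{B}}\mathscr{E})=r^2$ and $\mathrm{ch}_1(\mathscr{E}\otimes_{\mathscr{B}}\mathscr{E})=2rc$. The only cosmetic difference is that you phrase the final step as injectivity of the packaged map $[\mathrm{ch}]$, whereas the paper says an element vanishes iff both dual-basis functionals $\varphi^1,\varphi^2$ kill it --- these are the same statement.
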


The identity \eqref{eq:identity} is the analogue of the classical statement $t^2=0$ for $t:=r-[E]$.

\begin{proof}
Let $\varphi^1,\varphi^2\in K_0(B)^\vee$ be the maps $\varphi^1=[\mathrm{ch}_0]$ and $\varphi^2=[\mathrm{ch}_1]$
and let $c:=\mathrm{ch}_1(\mathscr{E})$.
Since
\[
\mat{c}{
\varphi^1(1) & \varphi^2(1) \\
\varphi^1([\mathscr{E}]) & \varphi^2([\mathscr{E}]) \\
}=\mat{c}{
1 & 0 \\
r & c
}
\]
is invertible over the integers, it follows from Lemma \ref{lemma:1} that the pair $(1,[\mathscr{E}])$ is a basis of $K_0(B)$ and
the pair $(\varphi^1,\varphi^2)$ is a basis of $K_0(B)^\vee$.

As a byproduct, an element $y\in K_0(B)$ is zero if and only if $\varphi^1(y)=\varphi^2(y)=0$. For $y:=r^2-2r[\mathscr{E}]+[\mathscr{E}\otimes_{\mathscr{B}}\mathscr{E}]$ the left hand side of \eqref{eq:identity}, applying $\varphi^1$ and $\varphi^2$ to both sides and using Corollary \ref{cor:usingcor} we find
$\varphi^1(y)=r^2-2r^2+r^2=0$ and $\varphi^2(y) =0-2rc+2rc=0$.
\end{proof}

\section{Modules associated to corepresentations}

In the main example we are interested in, the inclusion $\mathscr{B}\subseteq\mathscr{A}$ is a Hopf-Galois extension. We collect in this section some results about this case.

Let $H$ be a Hopf algebra with bijective antipode, $\mathscr{A}$ a right $H$-comodule algebra with coaction $\delta_R$ (all right coactions will be denoted $\delta_R$), $\mathscr{B}:=\mathscr{A}^{\mathrm{co}H}$ the subalgebra of $\mathscr{A}$ of coinvariant elements,
and assume that $\mathscr{B}\subset\mathscr{A}$ is a faithfully flat Hopf-Galois extension. Then the cotensor product 
$\mathscr{A}\Box^{H}(\,\,\cdot\,\,)$ is an additive monoidal functor from the category of left $H$-comodules to that of $\mathscr{B}$-bimodules; 
this means in particular that one has $\mathscr{B}$-bimodule isomorphisms
\begin{align}
\mathscr{A}\;\Box^{H}\,(V_1\oplus V_2) &\cong (\mathscr{A}\;\Box^{H}\,V_1)\oplus (\mathscr{A}\;\Box^{H}\,V_2) , 
\label{eq:monoidal-1}\\
\mathscr{A}\;\Box^{H}\,(V_1\otimes V_2) &\cong (\mathscr{A}\;\Box^{H}\,V_1)\otimes_{\mathscr{B}} (\mathscr{A}\;\Box^{H}\,V_2) , \label{eq:monoidal-2}
\end{align}
for any two left $H$-comodules $V_1$ and $V_2$ \cite[Thm.~2.5.3(2)]{Sch04}.
If $V$ is finite-dimensional, $\mathscr{A}\Box^{H}V$ is finitely generated and projective as a right (or left) $\mathscr{B}$-module \cite[Cor.~2.5.5]{Sch04}.
We now give a little more details about this result and describe how to construct a projection for the module associated to a finite-dimensional corepresentation.

Recall that, if $\mathscr{V}$ is an object in a monoidal category $(\mathcal{C},\otimes,\mathbf{1})$, a \emph{left dual} $\mathscr{V}^*$ (if it exists) is an object equipped with two morphisms \cite[Sect.~9.3]{Maj95}
\[
\mathrm{ev}_{\mathscr{V}}:\mathscr{V}^*\otimes\mathscr{V}\to\mathbf{1} , \qquad\quad
\mathrm{coev}_{\mathscr{V}}:\mathbf{1}\to\mathscr{V}\otimes\mathscr{V}^* ,
\]
satisfying the triangle identities
\begin{center}
\begin{tikzpicture}[scale=2]

\node (a) at (0,0) {$\mathbf{1}\otimes\mathscr{V}$};
\node (b) at (1,1) {$\mathscr{V}\otimes\mathscr{V}^*\otimes \mathscr{V}$};
\node (c) at (2,0) {$\mathscr{V}\otimes\mathbf{1}$};

\draw[-To,font=\footnotesize] (a) edge node[left] {$\mathrm{coev}_{\mathscr{V}}\otimes\id_{\mathscr{V}}$} (b) (b) edge node[right] {$\id_{\mathscr{V}}\otimes\mathrm{ev}_{\mathscr{V}}$} (c) (a) edge node[below] {$\id_{\mathscr{V}}$} (c);

\end{tikzpicture}\hspace{1cm}%
\begin{tikzpicture}[scale=2]

\node (a) at (2,0) {$\mathbf{1}\otimes\mathscr{V}^*$};
\node (b) at (1,1) {$\mathscr{V}^*\otimes\mathscr{V}\otimes \mathscr{V}^*$};
\node (c) at (0,0) {$\mathscr{V}^*\otimes\mathbf{1}$};

\draw[-To,font=\footnotesize] (b) edge node[right] {$\mathrm{ev}_{\mathscr{V}}\otimes\id_{\mathscr{V}^*}$} (a) (c) edge node[left] {$\id_{\mathscr{V}^*}\otimes\mathrm{coev}_{\mathscr{V}}$} (b) (c) edge node[below] {$\id_{\mathscr{V}^*}$} (a);

\end{tikzpicture}
\end{center}
where $\mathbf{1}$ is the tensor unit and, for every $\mathscr{V}$, we identify $\mathscr{V}\otimes\mathbf{1}$ and $\mathbf{1}\otimes\mathscr{V}$ with $\mathscr{V}$.

\begin{ex}\label{ex:comod}
Let $(V,\delta_L)$ be an object in the category ${}^{H}\mathrm{Mod}$ of finite-dimensional left $\mathcal{H}$-comodules with morphisms given by left $H$-colinear maps.
Let $V^\vee=\mathrm{Hom}_{\C}(V,\C)$ be the vector space dual (note that this consists of maps $V\to\C$ that are $\C$-linear, so not necessarily morphisms in  ${}^{H}\mathrm{Mod}$).
Fix a basis $(e_i)_{i=1}^r$ of $V$, denote by $(e^i)_{i=1}^r$ the dual basis of $V^\vee$, and call $T=(t_i^j)\in M_r(H)$ the matrix determined by the basis and the coaction via the formula
\begin{equation}\label{eq:matrixt}
\delta_L(e_i)=\sum_{j=1}^rt_i^j\otimes e_j .
\end{equation}
The dual coaction $\delta_L: V^\vee\to H\otimes V^\vee$ (all left coactions will be denoted by $\delta_L$) is given in the dual basis by
\[
\delta_L(e^j)=\sum_{j=1}^r S(t_i^j)\otimes e^i .
\]
Let $\mathrm{ev}_V:V^\vee\otimes V\to\C$ be the evaluation map and $\mathrm{coev}_V:\C\to V\otimes V^\vee$ the linear map determined by
\[
\mathrm{coev}_V(1)=\sum_{i=1}^re_i\otimes e^i .
\]
One easily checks that these two maps are morphisms in ${}^{H}\mathrm{Mod}$ (in particular, $\mathrm{coev}_V(1)$ is $H$-coinvariant). The triangle identities above are satisfied by definition of dual basis. Thus, $V^\vee$ equipped with the maps $\mathrm{ev}_V$ and $\mathrm{coev}_V$ is a left dual of $V$ in ${}^{H}\mathrm{Mod}$.
\end{ex}

\begin{ex}\label{ex:projmod}
Consider the monoidal category ${}_{\mathscr{B}}\mathrm{Mod}_{\mathscr{B}}$ of $\mathscr{B}$-bimodules, with morphisms given by $\mathscr{B}$-bilinear maps, with bifunctor $\otimes_{\mathscr{B}}$, and tensor unit $\mathscr{B}$.
For $\mathscr{E}$ an object in this category, let $\mathscr{E}^\vee:=\mathrm{Hom}_{\mathscr{B}}(\mathscr{E},\mathscr{B})$ as before (note that its elements are right $\mathscr{B}$-linear maps, so not necessarily morphisms in ${}_{\mathscr{B}}\mathrm{Mod}_{\mathscr{B}}$). The bimodule structure on $\mathscr{E}^\vee$ is
\begin{equation}\label{eq:dualbimo}
(b\cdot f)(\xi) :=b f(\xi) , \qquad\quad
(f\cdot b)(\xi) :=f(b\xi) ,
\end{equation}
for all $b\in\mathscr{B}$, $f\in\mathscr{E}^\vee$ and $\xi\in\mathscr{E}$.
Here $\cdot$ denotes the left/right action of $\mathscr{B}$ on $\mathscr{E}^\vee$, while in the displayed equation on the right hand side we have the multiplication in $\mathscr{B}$ and the left action on $\mathscr{E}$ respectively.
Clearly $b\cdot f$ and $f\cdot b$ are right $\mathscr{B}$-linear maps, and the left and right actions
defined in this way commute.

Let $\mathrm{ev}_{\mathscr{E}}:\mathscr{E}^\vee\otimes_{\mathscr{B}}\mathscr{E}\to\C$ be the evaluation map. Using \eqref{eq:dualbimo} one checks that this map is well-defined and is a morphism in ${}_{\mathscr{B}}\mathrm{Mod}_{\mathscr{B}}$.
Assume that there exists a map $\mathrm{coev}_{\mathscr{E}}:\mathscr{B}\to \mathscr{E}\otimes_{\mathscr{B}}\mathscr{E}^\vee$ that makes $\mathscr{E}^\vee$ a left dual of $\mathscr{E}$ in the category ${}_{\mathscr{B}}\mathrm{Mod}_{\mathscr{B}}$. Then, there exists  $\xi^1,\ldots,\xi^N\in\mathscr{E}$ and $\psi_1,\ldots,\psi_N\in\mathscr{E}^\vee$ such that
\begin{equation}\label{eq:coevE}
\mathrm{coev}_{\mathscr{E}}(1)=\sum_{i=1}^N\xi^i\otimes_{\mathscr{B}}\psi_i .
\end{equation}
The triangle identities imply that $\sum_{i=1}^N\xi^i\psi_i$ is the identity on $\mathscr{E}$, which means that $\mathscr{E}$ is finitely generated and projective as a right $\mathscr{B}$-module.
(One could also prove the converse: if $\mathscr{E}$ is finitely-generated and projective as a right $\mathscr{B}$-module, then $\mathscr{E}^\vee$ is a left dual of $\mathscr{E}$ in the category ${}_{\mathscr{B}}\mathrm{Mod}_{\mathscr{B}}$.)
\end{ex}

The argument in \cite{Sch04} is that the functor $\mathscr{A}\Box^{H}(\,\,\cdot\,\,)$ transforms
$V$ into $\mathscr{E}:=\mathscr{A}\Box^{H}V$, $V^\vee$ into a bimodule that is isomorphic to $\mathscr{E}^\vee$, and after the identification of $\mathscr{A}\Box^{H}V^\vee$ with $\mathscr{E}^\vee$ it transforms $\mathrm{ev}_V$ into $\mathrm{ev}_{\mathscr{E}}$ and $\mathrm{coev}_V$ into a coevaluation map $\mathrm{coev}_{\mathscr{E}}$.
Since functors (and isomorphisms) transform commutative diagram into commutative diagram,
the triangle identities satisfied by $\mathrm{ev}_V$ and $\mathrm{coev}_V$ imply that $\mathrm{ev}_{\mathscr{E}}$ and $\mathrm{coev}_V$ satisfy the triangle identities as well.

We now show that associated bundles trivialize when pulled back to the total space of a noncommutative principal bundle.

\begin{lemma}\label{lemma:pulltriv}
Let $V$ be a finite-dimensional left $H$-comodule, $(e_i)_{i=1}^r$ a basis of $V$, $T=(t^j_i)$ the matrix in \eqref{eq:matrixt}, and $\mathscr{E}:=\mathscr{A}\Box^{H}V$. Then, the multiplication map
\begin{equation}\label{eq:pulltriv}
\mathscr{E}\otimes_{\mathscr{B}}\mathscr{A}\longrightarrow\mathscr{A}\otimes V , \qquad\quad
\Big(\sum_ {i=1}^ra^i\otimes e_i\Big)\otimes_{\mathscr{B}}\widetilde{a}\longmapsto
\sum_ {i=1}^ra^i\widetilde{a}\otimes e_i
\end{equation}
is an isomorphism of right $\mathscr{A}$-modules.
\end{lemma}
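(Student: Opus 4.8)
The plan is to prove that the multiplication map $\mu$ of \eqref{eq:pulltriv} is an isomorphism of right $\mathscr{A}$-modules by writing down an explicit two-sided inverse built from the translation map of the Hopf--Galois extension. That $\mu$ is well defined and right $\mathscr{A}$-linear is immediate: on $\mathscr{E}\subseteq\mathscr{A}\otimes V$ the right $\mathscr{B}$-action is $(\sum_i a^i\otimes e_i)\cdot b=\sum_i a^i b\otimes e_i$, so $\mu$ is balanced over $\mathscr{B}$ and intertwines the right $\mathscr{A}$-actions on the $\mathscr{A}$-leg of the target; thus everything reduces to bijectivity. Recall also that $\sum_i a^i\otimes e_i$ lies in $\mathscr{E}=\mathscr{A}\,\Box^{H}V$ precisely when $\delta_R(a^l)=\sum_i a^i\otimes t_i^l$ for all $l$.

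The main tool is the canonical (Galois) isomorphism $\mathrm{can}\colon\mathscr{A}\otimes_{\mathscr{B}}\mathscr{A}\to\mathscr{A}\otimes H$, $a\otimes_{\mathscr{B}}a'\mapsto a\,a'_{(0)}\otimes a'_{(1)}$, whose bijectivity is the Hopf--Galois condition, together with the translation map $\tau(h)=h^{[1]}\otimes_{\mathscr{B}}h^{[2]}:=\mathrm{can}^{-1}(1\otimes h)$. I will use its standard properties: $h^{[1]}(h^{[2]})_{(0)}\otimes(h^{[2]})_{(1)}=1\otimes h$ (whence $h^{[1]}h^{[2]}=\varepsilon(h)1$, with $\varepsilon$ the counit of $H$), $a_{(0)}(a_{(1)})^{[1]}\otimes_{\mathscr{B}}(a_{(1)})^{[2]}=1\otimes_{\mathscr{B}}a$, and the coaction identity on the first leg $\delta_R(h^{[1]})\otimes_{\mathscr{B}}h^{[2]}=(h_{(2)})^{[1]}\otimes S(h_{(1)})\otimes_{\mathscr{B}}(h_{(2)})^{[2]}$. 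Here the bijectivity of the antipode $S$ assumed at the start of the section enters, through $S^{-1}$.

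With these at hand I would define the candidate inverse on a basis by
\[
\nu(\widetilde{a}\otimes e_k):=\sum_i\Big(\big(S^{-1}(t_k^i)\big)^{[1]}\otimes e_i\Big)\otimes_{\mathscr{B}}\big(S^{-1}(t_k^i)\big)^{[2]}\,\widetilde{a},
\]
extended linearly in $\widetilde a$. The first and most delicate step is to check that $\nu$ is valued in $\mathscr{E}\otimes_{\mathscr{B}}\mathscr{A}$, i.e.\ that each building block $\sum_i(S^{-1}(t_k^i))^{[1]}\otimes e_i$ is $H$-coinvariant. Since $\mathscr{B}\subseteq\mathscr{A}$ is faithfully flat, $(-)\otimes_{\mathscr{B}}\mathscr{A}$ is exact and commutes with the equalizer defining the cotensor product, so this amounts to comparing $(\delta_R\otimes\mathrm{id}_V)$ and $(\mathrm{id}_{\mathscr{A}}\otimes\delta_L)$ on the block; feeding the coaction identity for $h^{[1]}$ together with $\Delta(t_k^i)=\sum_j t_k^j\otimes t_j^i$ and $S\circ S^{-1}=\mathrm{id}$, both sides collapse to $\sum_{i,l}(S^{-1}(t_k^i))^{[1]}\otimes t_i^l\otimes e_l\otimes_{\mathscr{B}}(S^{-1}(t_k^i))^{[2]}$, which is exactly the coinvariance condition.

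It then remains to verify that $\nu$ inverts $\mu$ on both sides. The identity $\mu\circ\nu=\mathrm{id}$ is immediate, since applying $\mu$ collapses $(S^{-1}(t_k^i))^{[1]}(S^{-1}(t_k^i))^{[2]}=\varepsilon(t_k^i)=\delta_k^i$ and leaves $\widetilde a\otimes e_k$. For $\nu\circ\mu=\mathrm{id}$, starting from $(\sum_l a^l\otimes e_l)\otimes_{\mathscr{B}}\widetilde a\in\mathscr{E}\otimes_{\mathscr{B}}\mathscr{A}$ one is reduced, for each fixed $i$, to the identity
\[
\sum_l\big(S^{-1}(t_l^i)\big)^{[1]}\otimes_{\mathscr{B}}\big(S^{-1}(t_l^i)\big)^{[2]}a^l=a^i\otimes_{\mathscr{B}}1 ;
\]
I would prove it by applying the injective map $\mathrm{can}$ to the left-hand side, using $h^{[1]}(h^{[2]})_{(0)}\otimes(h^{[2]})_{(1)}=1\otimes h$ to remove the translation legs, then the coinvariance $\delta_R(a^l)=\sum_m a^m\otimes t_m^l$, and finally the antipode relation $\sum_l S^{-1}(t_l^i)\,t_m^l=\delta_m^i$, which turns the right-hand side into $a^i\otimes 1=\mathrm{can}(a^i\otimes_{\mathscr{B}}1)$. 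I expect the main obstacle to be precisely the well-definedness step and this last identity: keeping the two translation legs correctly paired across $\otimes_{\mathscr{B}}$ so that the coaction bookkeeping closes up, which is exactly where the corepresentation identities for $T$ and the invertibility of $S$ are needed.
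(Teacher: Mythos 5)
Your proof is correct, but it takes a genuinely different route from the paper's. The paper never writes down an inverse: it observes that the \emph{second} canonical map $\mathrm{can}'(a\otimes_{\mathscr{B}}\widetilde{a})=\delta_R(a)(\widetilde{a}\otimes 1)$ is bijective (because the antipode is bijective, citing Kreimer--Takeuchi), checks that $\mathrm{can}'$ is right $\mathscr{A}$-linear and $H$-colinear for the first-leg coaction, applies the functor $(\,\cdot\,)\Box^{H}V$ to it, and then collapses $H\Box^{H}V\cong V$ via the counit, with $\delta_L$ as the explicit two-sided inverse of that last identification. You instead invert the \emph{first} canonical map and package its inverse as the translation map $\tau(h)=h^{[1]}\otimes_{\mathscr{B}}h^{[2]}$, building an explicit two-sided inverse $\nu(\widetilde{a}\otimes e_k)=\sum_i\bigl((S^{-1}(t_k^i))^{[1]}\otimes e_i\bigr)\otimes_{\mathscr{B}}(S^{-1}(t_k^i))^{[2]}\widetilde{a}$; the bijectivity of $S$ enters through the explicit $S^{-1}$ in your formula rather than through the bijectivity of $\mathrm{can}'$. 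Your computations check out: the colinearity identity $\delta_R(h^{[1]})\otimes_{\mathscr{B}}h^{[2]}=(h_{(2)})^{[1]}\otimes S(h_{(1)})\otimes_{\mathscr{B}}(h_{(2)})^{[2]}$ combined with $\Delta S^{-1}=(S^{-1}\otimes S^{-1})\circ\mathrm{flip}\circ\Delta$ does give the coinvariance of the image of $\nu$, and the relation $\sum_l S^{-1}(t_l^i)t_m^l=\varepsilon(t_m^i)=\delta_m^i$ is the correct form of the antipode axiom for $S^{-1}$. Two remarks on where the hypotheses are used. First, your well-definedness step genuinely needs (faithful) flatness of $\mathscr{A}$ over $\mathscr{B}$ to identify $\mathscr{E}\otimes_{\mathscr{B}}\mathscr{A}$ with the equalizer inside $(\mathscr{A}\otimes V)\otimes_{\mathscr{B}}\mathscr{A}$, since the separate legs $h^{[1]}$, $h^{[2]}$ of the translation map are not individually well defined; you flag this correctly, and the paper's route needs the same flatness (tacitly) to identify $(\mathscr{A}\Box^{H}V)\otimes_{\mathscr{B}}\mathscr{A}$ with $(\mathscr{A}\otimes_{\mathscr{B}}\mathscr{A})\Box^{H}V$ before applying $\mathrm{can}'\,\Box^{H}V$. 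Second, what your approach buys is an explicit formula for the inverse (useful if one wants concrete trivializing pairs, as in Section 5), at the cost of the translation-map bookkeeping; what the paper's approach buys is brevity and the reusable observation that the trivialization is just $\mathrm{can}'$ seen through the cotensor functor.
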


\begin{proof}
Recall that the cotensor product is a bifunctor
\[
\Box^{H}:\mathrm{Mod}^{H}_{\mathscr{A}}\times 
{}^{H}\mathrm{Mod}\to\mathrm{Mod}_{\mathscr{A}} .
\]
By definition of Hopf-Galois extension, the canonical map
\[
\mathrm{can}:\mathscr{A}\otimes_\mathscr{B}\mathscr{A}\to \mathscr{A}\otimes H , \qquad\quad a\otimes_\mathscr{B}\widetilde{a}\mapsto (a\otimes 1)\delta_R(\widetilde{a}) ,
\]
is bijective. Since $H$ has bijective antipode, the map
\[
\mathrm{can}':\mathscr{A}\otimes_\mathscr{B}\mathscr{A}\to \mathscr{A}\otimes H , \qquad\quad a\otimes_\mathscr{B}\widetilde{a}\mapsto \delta_R(a) (\widetilde{a}\otimes 1) ,
\]
is also bijective \cite[Sect.~1.2]{KT81}. The vector space $\mathscr{A}\otimes_\mathscr{B}\mathscr{A}$ is an $\mathscr{A}$-bimodule in the obvious way, and carries a right coaction of $H$ given by the application of $\delta_R$ on the first leg:
\begin{equation}\label{eq:coactiononABA}
(a\otimes_{\mathscr{B}}\widetilde{a})\longmapsto
(a_{(0)}\otimes_{\mathscr{B}}\widetilde{a})\otimes a_{(1)} .
\end{equation}
The vector space $\mathscr{A}\otimes H$ carries a right coaction of $H$ given by the coproduct on the second leg, a right action of $\mathscr{A}$ given by the right multiplication on the first leg, and a left action of $a\in\mathscr{A}$ given by the multiplication from the left by $\delta_R(a)$.
With respect to these actions and coactions, $\mathrm{can}'$ is $\mathscr{A}$-bilinear and $H$-colinear.

Applying the functor $(\,\,\cdot\,\,)\Box^{H}V$ on both domain and codomain of $\mathrm{can}'$ we get an isomorphism of right $\mathscr{A}$-modules:
\begin{equation}\label{eq:canp}
\mathrm{can}'\,\Box^{H}V:
\mathscr{E}\otimes_\mathscr{B}\mathscr{A}=
(\mathscr{A}\Box^{H}V)\otimes_\mathscr{B}\mathscr{A}\to \mathscr{A}\otimes (H\Box^{H}V) .
\end{equation}
In $H\Box^{H}V$ the right coaction of $H$ on itself is given by the coproduct $\Delta$.
The counit induces linear map
\begin{equation}\label{eq:counit}
H\Box^{H}V\to V , \qquad\quad
\sum_{j=1}^rh^j\otimes e_j\mapsto\sum_{j=1}^r\varepsilon(h^j)e_j \, .
\end{equation}
On the other hand, the coaction $\delta_L:V\to H\otimes V$ has image in the cotensor product and induces a map that is a right inverse of the map \eqref{eq:counit}. It is not difficult to show that it is also a left inverse.
A sum $\sum_{j=1}^rh^j\otimes e_j\in H\otimes V$ belongs to $H\Box^{H}V$ if and only if
\[
\Delta(h^j)=\sum_{i=1}^rh^i\otimes t_i^j .
\]
From $(\varepsilon\otimes\id)\Delta(h^j)=h^j$ we deduce that $\sum_{i=1}^r\varepsilon(h^i)t_i^j=h^j$.
Using this, if we apply $\delta_L$ to the image of \eqref{eq:counit} we find
\[
\delta_L\Big(\sum_{j=1}^r\varepsilon(h^j)e_j\Big)=
\sum_{i=1}^r\varepsilon(h^i)t_i^j\otimes e_j=\sum_{j=1}^rh^j\otimes e_j ,
\]
thus proving the claim.

If we apply \eqref{eq:counit} to the parenthesis in \eqref{eq:canp}, we find the desired right $\mathscr{A}$-module isomorphism
\[
\mathscr{E}\otimes_\mathscr{B}\mathscr{A}\to \mathscr{A}\otimes (H\Box^{H}V)\to
\mathscr{A}\otimes V\to\mathscr{A}\otimes V ,
\]
given explicitly by the formula 
\begin{multline*}
\Big(\sum_ {i=1}^ra^i\otimes e_i\Big)\otimes_{\mathscr{B}}\widetilde{a}\mapsto
\sum_ {i=1}^r\Big(\delta_R(a^i)(\widetilde{a}\otimes 1)\Big) \otimes e_i \\
\mapsto
\sum_ {i=1}^r\Big((\id\otimes\varepsilon)\delta_R(a^i)(\widetilde{a}\otimes 1)\Big) \otimes e_i
=\sum_ {i=1}^ra^i\widetilde{a}\otimes e_i .
\end{multline*}
\end{proof}

Let $T=(t^j_i)$ be the matrix in \eqref{eq:matrixt}.
Using the dual bases in Example \ref{ex:comod}, any element in $\mathscr{E}:=\mathscr{A}\Box^{H}V$ can be written in a unique way as a sum
\begin{equation}\label{eq:bolda}
\sum_{j=1}^r a^j\otimes e_j
\end{equation}
with $a^1,\ldots,a^r\in\mathscr{A}$ satisfying
\begin{equation}\label{eq:boldad}
\delta_R(a^j)=\sum_{i=1}^r a^i\otimes t^j_i ,
\end{equation}
and any element in $\mathscr{A}\Box^{H}V^\vee$ can be written in a unique way as a sum
\begin{equation}\label{eq:boldat}
\sum_{i=1}^r \widetilde{a}_i\otimes e^i
\end{equation}
with $\widetilde{a}_1,\ldots,\widetilde{a}_r\in\mathscr{A}$ satisfying
\begin{equation}\label{eq:boldatd}
\delta_R(\widetilde{a}_i)=\sum_{i=1}^r \widetilde{a}_j\otimes S(t^j_i) .
\end{equation}
There is an obvious morphism $\phi:\mathscr{A}\Box^{H}V^\vee\to\mathscr{E}^\vee$ of $\mathscr{B}$-bimodules (in fact, an isomorphism), that maps \eqref{eq:boldat} to a linear map $\mathscr{E}\to\mathscr{B}$ consisting in the evaluation of the second leg of \eqref{eq:boldat} on the second leg of \eqref{eq:bolda}.

To construct an explicit projection now we would need a formula for the map
\begin{align*}
(\mathscr{A}\Box^{H}V_1)\otimes_{\mathscr{B}}(\mathscr{A}\Box^{H}V_2) &\to
\mathscr{A}\Box^{H}(V_1\otimes V_2) , \\
\big(\sum\nolimits_ia^i\otimes v_i\big)\otimes_{\mathscr{B}}
\big(\sum\nolimits_j\widetilde{a}^j\otimes w_j\big) &\mapsto
\sum\nolimits_{i,j}a^i\widetilde{a}^j\otimes (v_i\otimes w_j) .
\end{align*}
We know that this is an isomorphism for any pair of comodules $V_1$ and $V_2$, but it is not obvious how to get an explicit formula for its inverse. With that, $\mathrm{coev}_{\mathscr{E}}$ could be obtained as the composition:
\begin{align*}
\mathscr{B} &\stackrel{\sim}{\longrightarrow}\mathscr{A}\Box^{H}\C\xrightarrow{\mathscr{A}\Box^{H}\mathrm{coev}_V}\mathscr{A}\Box^{H}(V\otimes V^\vee)
\stackrel{\sim}{\longrightarrow}(\mathscr{A}\Box^{H}V)\otimes_{\mathscr{B}}(\mathscr{A}\Box^{H}V^\vee) \\
& \hspace*{8cm}\xrightarrow{\id\otimes_{\mathscr{B}}\phi}\mathscr{E}\otimes_{\mathscr{B}}\mathscr{E}^\vee
\end{align*}
where the first arrow is the map $b \mapsto b\otimes 1$.
In particular, $\mathrm{coev}_{\mathscr{E}}(1)$ is the image through the last two arrows of the element $1\otimes\mathrm{coev}_V(1)=1\otimes\sum_{i=1}^re_i\otimes e^i$.

In our main example in Sect.~\ref{sec:4} we shall adopt a different strategy and use the following lemma.
From now on, we assume that $H$ is a Hopf *-algebra and $\mathscr{A}$ a right $H$-comodule *-algebra, which means that $\delta_R(a^*)=\delta_R(a)^{*\otimes *}$. Also, we assume that $V$ is a unitary corepresentation, that is $T=(t_i^j)\in M_r(H)$ is a unitary matrix. By unicity of the inverse matrix, this means that $(t_j^i)^*=S(t_i^j)$.

\begin{lemma}\label{lemma:44}
Assume we can find a $N\times r$ matrix $u=(u_i^j)$ with entries in $\mathscr{A}$ such that
\[
\delta_R(u_i^j)=\sum_{k=1}^ru_i^k\otimes t_k^j \qquad\text{and}\qquad u^*u=1_r .
\]
Then, $\mathscr{A}\Box^{H}V^\vee\cong p\,\mathscr{B}^N$ with $p=uu^*$,
and $(u,v:=u^*)$ is a trivializing pair for this module.
\end{lemma}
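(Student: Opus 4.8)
The plan is to realize $\mathscr{E}:=\mathscr{A}\Box^{H}V^\vee$ explicitly as a coinvariant subspace of $\mathscr{A}^r$, and then to verify by two complementary coinvariance computations that left multiplication by $u$ and by $u^*$ give mutually inverse right $\mathscr{B}$-linear isomorphisms $\mathscr{E}\cong p\,\mathscr{B}^N$ with $p=uu^*$. Throughout, the only nontrivial inputs are the $*$-compatibility $\delta_R(a^*)=\delta_R(a)^{*\otimes *}$, the unitarity relation $(t_a^b)^*=S(t_b^a)$, and the two antipode collapses $\sum_k t_l^k\,S(t_k^m)=\delta_l^m 1$ and $\sum_j t_k^j\,S(t_j^l)=\delta_k^l 1$, both instances of $\sum a_{(1)}S(a_{(2)})=\varepsilon(a)1$ applied to a matrix coefficient of $T$. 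As a preliminary I would record the purely formal consequences of $u^*u=1_r$: since $(u^*)^*=u$, the element $p:=uu^*$ satisfies $p^*=p$ and $p^2=u(u^*u)u^*=uu^*=p$, so it is a projection in $M_N(\mathscr{A})$, and $pu=u(u^*u)=u$ shows that the columns of $u$ lie in $p\,\mathscr{A}^N$.

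The first genuine step is to show $p\in M_N(\mathscr{B})$, i.e.\ that each $p_i^j=\sum_k u_i^k(u_j^k)^*$ is coinvariant. Applying the algebra homomorphism $\delta_R$ and using $\delta_R(u_i^k)=\sum_l u_i^l\otimes t_l^k$ together with $\delta_R((u_j^k)^*)=\sum_m(u_j^m)^*\otimes S(t_k^m)$ (which follows from $*$-compatibility and unitarity), the $H$-leg of $\delta_R(p_i^j)$ becomes $\sum_k t_l^k S(t_k^m)=\delta_l^m 1$, whence $\delta_R(p_i^j)=p_i^j\otimes 1$. Next I would invoke Lemma \ref{lemma:pulltriv}, applied to the comodule $V^\vee$: it trivializes $\mathscr{E}\otimes_{\mathscr{B}}\mathscr{A}\cong\mathscr{A}\otimes V^\vee\cong\mathscr{A}^r$, and under this identification (with $e\mapsto e\otimes_{\mathscr{B}}1$) the module $\mathscr{E}$ is exactly the set of columns $(\tilde a_i)\in\mathscr{A}^r$ satisfying the coinvariance condition \eqref{eq:boldatd}.

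With this description in hand, the isomorphism follows from two checks. For $\eta=(\tilde a_i)\in\mathscr{E}$ the vector $u\eta\in\mathscr{A}^N$ is coinvariant: its $H$-leg collapses via $\sum_j t_k^j S(t_j^l)=\delta_k^l 1$, so $u\eta\in\mathscr{B}^N$, and $p(u\eta)=u\eta$ places it in $p\,\mathscr{B}^N$. Conversely, for $x\in p\,\mathscr{B}^N$ one has $\delta_R(x_i)=x_i\otimes 1$, and a direct computation shows that $u^*x\in\mathscr{A}^r$ satisfies \eqref{eq:boldatd}, hence $u^*x\in\mathscr{E}$. Since $u(u^*x)=px=x$ and $u^*(u\eta)=(u^*u)\eta=\eta$, left multiplication by $u$ and by $u^*$ are mutually inverse right $\mathscr{B}$-linear maps, giving $\mathscr{E}\cong p\,\mathscr{B}^N$. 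Finally, $v:=u^*$ obeys $vu=1_r$, $uv=p\in M_N(\mathscr{B})$, its columns lie in $\mathscr{E}$, and the two maps are precisely \eqref{eq:isoA}--\eqref{eq:isoB}; thus $(u,u^*)$ is a trivializing pair in the sense of Proposition \ref{prop:uv}.

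I expect the only real difficulty to be notational: one must keep the index bookkeeping straight while simultaneously handling the $*$-operation, the coaction, and the antipode. The decisive point is that unitarity $(t_a^b)^*=S(t_b^a)$ is exactly what makes the starred entries of $u$ feed correctly into the antipode identities, so that in each computation the two $H$-legs collapse to Kronecker deltas.
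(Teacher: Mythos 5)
Your proposal is correct and takes essentially the same route as the paper: the paper's (much terser) proof uses exactly the same two maps, left multiplication by $u$ and by $v=u^*$, the same identification of $\mathscr{A}\Box^{H}V^\vee$ with coinvariant columns via \eqref{eq:boldatd}, and the same coinvariance checks, merely asserting them ``by construction''. You simply fill in the antipode collapses and the trivializing-pair verification that the paper leaves implicit.
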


\begin{proof}
By definition $u_j^i:=(v_i^j)^*$.
The entries
$p_i^j := \sum_{k=1}^r u_i^kv_k^j$
of the matrix $p=(p_i^j)$ are $\delta_R$-coinvariant by construction, that is they belong to $\mathscr{B}$. Clearly $p$ is a projection. Because of \eqref{eq:boldatd}, the right $\mathscr{B}$-module map
\[
\mathscr{A}\Box^{H}V^\vee\longrightarrow
\mathscr{A}^N , \qquad\quad
\sum_{j=1}^r a_j\otimes e^j\longmapsto
\Big(\sum_{j=1}^r u^j_ia_j \Big)_{i=1}^N
\]
has image in $p\,\mathscr{B}^N$ (the sum on the right hand side is $\delta_R$-coinvariant). The inverse map is given by
\[
p\,\mathscr{B}^N\longrightarrow\mathscr{A}\Box^{H}V^\vee , \qquad\quad
(b_1,\ldots,b_N)\longmapsto 
\sum_{i=1}^r \sum_{j=1}^N v_i^jb_j\otimes e^i \,,
\]
proving that we have an isomorphism.
\end{proof}

The previous lemma holds even if the extension $\mathscr{B}:=\mathscr{A}^{\mathrm{co}H}\subset\mathscr{A}$ is not Hopf-Galois. Under the same assumptions, one proves that 
$\mathscr{B}^Np$ is isomorphic to $\mathscr{A}\Box^{H}V$ as a left $\mathscr{B}$-module.

\section{Vector bundles on a quantum 4-sphere}\label{sec:4}

Let \mbox{$0<q<1$}.
We call \emph{quantum symplectic $7$-sphere} the noncommutative space underlying the unital *-algebra $\mathcal{O}(S^7_q)$ generated by elements $\{x_i,x_i^*\}_{i=1,\ldots,4}$ with commutation relations
\begin{align*}
x_1 x_2&=q x_2 x_1, & 
x_1 x_3 &=q x_3 x_1, & x_1 x_4&=q^2 x_4 x_1 ,
\\
x_2 x_3&=q^2 x_3 x_2+ q^2 (q-q^{-1}) x_4 x_1 ,  &
x_2 x_4&=q x_4 x_2 ,
& x_3 x_4&=q x_4 x_3 ,
\end{align*}
together with
\begin{align*}
x_1^* x_1 &= x_1  x_1^* , &
x_1^* x_4 &=q^2 x_4 x_1^* , &
x_2^* x_4 &=q x_4 x_2^*+q^2(q-q^{-1}) x_3 x_1^* ,
\\
x_1^* x_2 &=q x_2 x_1^* , &
x_2^* x_2 &= x_2  x_2^* + (1-q^2) x_1 x_1^* , &
x_3^* x_3 &= x_3  x_3^* +(1-q^4) x_2  x_2^* + (1-q^2) x_1 x_1^* ,
\\
x_1^* x_3 &=q x_3 x_1^*   , &
x_2^* x_3&=q^2 x_3 x_2^* , &
x_3^* x_4 &=q x_4  x_3^*-q^4(q-q^{-1}) x_2 x_1^* ,
\end{align*}
and sphere relations:
\begin{gather*}
x_1 x_1^*+x_2 x_2^*+ x_3 x_3^*+ x_4 x_4^*=1 , \\
q^8 x_1^* x_1+q^6 x_2^* x_2+q^2 x_3^* x_3+x_4^* x_4=1 .
\end{gather*}
Additional relations are obtained from the ones above by applying the algebra involution $^*$ (for example, $x_4^*  x_1=q^2 x_1 x_4^*$).
This algebra was studied in ~\cite{LPR06} as a quantum homogeneous space of the quantum symplectic group $\mathcal{O}(Sp_q(2))$. 
(The generators used in~\cite{LPR06}, here denoted $x'_i$, are related
to those used here by the equations $x'_1=q^4x_1$, $x'_2=q^3x_2$, $x'_3=qx_3$ and $x'_4=x_4$.)

%medskip

A character $\varepsilon:\mathcal{O}(S^7_q)\to\C$ is given on generators by
\[
\varepsilon(x_i)=0\;  \quad \mbox{for } i\neq 4,\qquad
\varepsilon(x_4)=1 \;.
\]
The character $\varepsilon$ is the restriction to $\mathcal{O}(S^7_q)$ of the counit 
of the quantum group $\mathcal{O}(Sp_q(2))$. 

The quotient of $\mathcal{O}(S^7_q)$ by the ideal generated by $x_1$ is the unital *-algebra $\mathcal{O}(S^5_q)$ of a quantum $5$-sphere (see \cite{DL21} for the general case). The irreducible bounded *-representation of $\mathcal{O}(S^5_q)$ in \cite{DL21} (the one for $\lambda=1$), when pulled back to $\mathcal{O}(S^7_q)$ gives a representation
$\pi$ on $\ell^2(\N^2)$, with canonical orthonormal basis $\big(\ket{k_1,k_2}\big)_{k_1,k_2\in\N}$,
 that on generators reads:
\begin{align*}
\pi(x_1) &=0  \\ 
\pi(x_2)\ket{k_1,k_2} &=q^{k_1+2k_2}\ket{k_1,k_2} \\
\pi(x_3)\ket{k_1,k_2} &=q^{k_1}\sqrt{1-q^{4(k_2+1)}}\ket{k_1,k_2+1} \\
\pi(x_4)\ket{k_1,k_2} &=\sqrt{1-q^{2(k_1+1)}}\ket{k_1+1,k_2} .
\end{align*}

\subsection{The  instanton bundle}\label{sec:4.1}

The algebra  $\mathcal{O}(S^7_q)$ carries a coaction of the 
Hopf *-algebra $\mathcal{O}(SU_q(2))$, which makes it a faithfully flat Hopf--Galois extension of its subalgebra of coinvariant elements \cite{LPR06}.
We denote by
 $\alpha,\gamma$ the generators
 of the Hopf *-algebra $\mathcal{O}(SU_q(2))$, with  $\alpha^*,\gamma^*$ their   adjoints, and
 relations that can be obtained by imposing that the matrix
\begin{equation}\label{eq:T1}
\tt{1}:=\mat{c}{\alpha & -q\gamma^* \\ \gamma & \alpha^*}
\end{equation}
is a unitary corepresentation. 
We arrange the generators of $\mathcal{O}(S^7_q)$ in the matrix
\begin{equation}\label{eq:Psi}
u:=\left(\begin{array}{rr}
q x_1 & q x_2 \\
-q^2 x_2^* & q^3 x_1^* \\
-x_3 & x_4   \\
x_4^* &  q x_3^*
\end{array}\right) \; .
\end{equation}
The coaction of $\mathcal{O}(SU_q(2))$ on $\mathcal{O}(S^7_q)$ is written on the generators as
\begin{equation}\label{eq:Coa0}
\delta_R(u_i^k) = \sum_{j=0,1}u_i^j \otimes (\tt{1})_j^k
\end{equation}
for all $i\in\{1, \dots, 4\}$ and $k\in\{ 0,1\}$,
and extended as an algebra map to the whole algebra 
$\mathcal{O}(S^7_q)$. Note that we count the rows and columns of the matrix 
\eqref{eq:T1} and the columns of the matrix \eqref{eq:Psi} starting from $0$.
The formula \eqref{eq:Coa0} can be written in compact form as
\begin{equation}\label{eq:Coa}
\delta_R(u)=u\,\dot{\ot}\,\tt{1} .
\end{equation}
The  subalgebra of elements of $\mathcal{O}(S^7_q)$ which are coinvariant for the coaction, that is those $b \in \mathcal{O}(S^7_q)$ such that $\delta_R(b)=b \ot 1$,
is denoted $\mathcal{O}(S^4_q)$.
The inclusion
\[
\mathcal{O}(S^4_q)\subset\mathcal{O}(S^7_q)
\]
is a faithfully flat $\mathcal{O}(SU_q(2))$--Galois extension. Geometrically this is a quantum principal bundle on the quantum $4$-sphere $S^4_q$ with structure quantum group $SU_q(2)$ and total space $S^7_q$. 
Explicitly the *-algebra $\mathcal{O}(S^4_q)$ is generated by the entries of the matrix
\begin{equation}\label{eq:P}
p:=u u^*= \left(\begin{array}{cccc}
 q^{-2}   y_0 & 0 & y_1 & y_2 \\
    0 &   y_0 & q^{-2} y_2^* & -q^2 y_1^t* \\
    y_1^* & q^{-2} y_2 & 1- q^{-4} y_0 & 0 \\
    y_2^* & -q^2 y_1 & 0 & 1- q^2 y_0
\end{array}\right) \;,
\end{equation}
where 
$$
y_0 := q^4(x_1 x_1^*+ x_2 x_2^*) \qquad
y_1 := - q x_1 x_3^* + q x_2 x_4^* \qquad
y_2 := q x_1 x_4 + q^2 x_2 x_3 \;.
$$
The relations among generators are encoded in the equality $p^2=p$, which follows from $u^*u=1_2$. There are commutation relations 
\begin{align}\label{com-relS4}
& y_1 y_2 = q^4 y_2 y_1  \, ,\qquad   \quad 
y_1^* y_2 = y_2 y_1^* \, ,\quad 
&& y_0 y_1 = q^{-2} y_1 y_0 \, ,\qquad  \quad 
y_0 y_2 = q^4 y_2 y_0  \, ,\nonumber
\\
& y_1 y_1^* -q^4 y_1^* y_1 = (q^{-2}-1)y_0  \, ,
&& y_2 y_2^*  - q^{-4} y_2^* y_2 =(1 -q^{-4}) y_0^2\, ,
\end{align}
and the sphere relation
$$
q^4 y_1^* y_1  + q^{-4} y_2^* y_2  =  y_0(1-  y_0)
\quad , \quad
\big( \mbox{ or \quad} y_1 y_1^*  + y_2 y_2^*  =  q^{-2} y_0(1- q^{-2} y_0) \big)
$$
 validating the interpretation as a quantum $4$-sphere.

The projection $p$ in \eqref{eq:P}  determines a class $[\mathscr{E}] $ in the $K$-theory of  $\mathcal{O}(S^4_q)$.  
As shown in \cite{LPR06}, the projection $p$ describes the quantum vector bundle on the  $4$-sphere $S^4_q$  associated with the Hopf--Galois extension $\mathcal{O}(S^4_q)\subset\mathcal{O}(S^7_q)$ via the fundamental corepresentation $\tt{1}$ of $\mathcal{O}(SU_q(2))$.

\subsection{The K-theory ring}\label{sec:4.2}

The C*-enveloping algebra of $\mathcal{O}(S^7_q)$ is isomorphic to the one of the Vaksman-Soibelman quantum $7$-sphere \cite{Sau17}, here denoted by $C(S^7_q)$. We let $C(S^4_q)$ be the C*-enveloping algebra of $\mathcal{O}(S^4_q)$, not the closure in $C(S^7_q)$.
This is isomorphic to the minimal unitalization of the compact operators (see \cite{LPR06}), which in turn is isomorphic to the C*-algebra $C(S^2_q)$ of the standard Podle\'s sphere, as proved in \cite{She91}.

\begin{lemma}
$K_0\big(C(S^7_q)\big)\cong\Z$ is generated by the class of the unit.
\end{lemma}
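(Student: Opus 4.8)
The plan is to compute $K_0\big(C(S^7_q)\big)$ from a short exact sequence of C*-algebras, reducing to the lower quantum sphere $C(S^5_q)$ together with a single index computation.

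First I would exploit the observation recorded above that quotienting $\mathcal{O}(S^7_q)$ by the ideal generated by $x_1$ yields $\mathcal{O}(S^5_q)$. At the level of C*-envelopes (using the identification of $C(S^7_q)$ with the Vaksman--Soibelman sphere from \cite{Sau17}) this produces a short exact sequence
$$0\longrightarrow\mathcal{J}\longrightarrow C(S^7_q)\xrightarrow{\ \rho\ }C(S^5_q)\longrightarrow 0,$$
with $\mathcal{J}$ the closed two-sided ideal generated by $x_1$. The key structural input is the identification $\mathcal{J}\cong\mathcal{K}\big(\ell^2(\N)\big)\otimes C(S^5_q)$, the copy of the compacts arising from the weighted-shift behaviour of $x_1$ on the region $\{x_1\neq 0\}$; this is the one-dimension-up analogue of the familiar extension $0\to\mathcal{K}\otimes C(S^1)\to C(SU_q(2))\to C(S^1)\to 0$. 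By stability of K-theory under tensoring with $\mathcal{K}$, one then has $K_*(\mathcal{J})\cong K_*\big(C(S^5_q)\big)$.

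The same recipe applies one step down ($x_1=0$ sends $\mathcal{O}(S^5_q)$ to $\mathcal{O}(SU_q(2))$, and $\gamma=0$ sends $\mathcal{O}(SU_q(2))$ to $\mathcal{O}(S^1)$), so I would set up an induction on the dimension with base case the classical circle, where $K_0\big(C(S^1)\big)\cong\Z$ is generated by $[1]$ and $K_1\big(C(S^1)\big)\cong\Z$. Granting inductively that $K_0\big(C(S^5_q)\big)\cong\Z=\Z[1]$ and $K_1\big(C(S^5_q)\big)\cong\Z$, the six-term exact sequence contains the portion
$$K_1\big(C(S^5_q)\big)\xrightarrow{\ \partial\ }K_0(\mathcal{J})\xrightarrow{\ i_*\ }K_0\big(C(S^7_q)\big)\xrightarrow{\ \rho_*\ }K_0\big(C(S^5_q)\big).$$
Since $\rho$ is unital, $\rho_*[1]=[1]$ generates $K_0\big(C(S^5_q)\big)\cong\Z$, so $\rho_*$ is surjective; to conclude $K_0\big(C(S^7_q)\big)\cong\Z$ with generator $[1]$ it then suffices, by exactness, to show that the index map $\partial\colon K_1\big(C(S^5_q)\big)\to K_0(\mathcal{J})$ is surjective, since this forces $i_*=0$ and hence $\rho_*$ injective.

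The main obstacle is precisely this index computation. One must exhibit a unitary generating $K_1\big(C(S^5_q)\big)\cong\Z$ and verify that its connecting-map image generates $K_0(\mathcal{J})\cong\Z$; concretely, a lift of that unitary to $C(S^7_q)$ is a partial isometry whose defect projection is the rank-one generator of $K_0$ of the ideal, exactly as $\partial[z]=-[e_{00}]$ in the Toeplitz extension $0\to\mathcal{K}\to\mathcal{T}\to C(S^1)\to 0$. This Fredholm-index statement --- the analogue of the fact that $\alpha$ has index $\pm1$ in $C(SU_q(2))$ --- carries the real content of the lemma; once it is established, exactness delivers $K_0\big(C(S^7_q)\big)=\Z[1]$ (and, as a byproduct, $K_1\big(C(S^7_q)\big)\cong\Z$), matching the classical ring $K^*(S^7)$.
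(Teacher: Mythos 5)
Your proposal is a strategy for reproving the lemma from scratch, whereas the paper simply assembles it from the literature: Saurabh's isomorphism of $C(S^7_q)$ with a Vaksman--Soibelman sphere, the Vaksman--Soibelman computation $K_0\cong\Z$, and the identification of the generator with $[1]$ from Hawkins--Landi. A self-contained argument along your lines is legitimate in principle, but as written it has two problems. First, the identification $\mathcal{J}\cong\mathcal{K}(\ell^2(\N))\otimes C(S^5_q)$ is not correct. The primitive ideal space of $\mathcal{J}$ consists only of those irreducible representations of $C(S^7_q)$ that do \emph{not} kill $x_1$, i.e.\ the top-dimensional family parametrized by a circle; all lower-dimensional representations factor through the quotient $C(S^5_q)$. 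The correct statement (visible in Sheu's groupoid picture, or in the graph-algebra realization of Hong--Szyma\'nski) is $\mathcal{J}\cong\mathcal{K}(\ell^2(\N^3))\otimes C(S^1)$. By a fortunate coincidence $K_*(\mathcal{K}\otimes C(S^1))\cong(\Z,\Z)$ agrees with what you assumed, so the shape of the six-term sequence is unchanged, but your justification of $K_*(\mathcal{J})$ is wrong and must be replaced.

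Second, and more seriously, the step you yourself flag as ``the main obstacle'' --- exhibiting a unitary generating $K_1\big(C(S^5_q)\big)\cong\Z$ and showing that the connecting map sends it to a generator of $K_0(\mathcal{J})\cong\Z$ --- is exactly where the entire content of the lemma lives, and it is not carried out. Without it, exactness only gives a short exact sequence $0\to\mathrm{coker}(\partial)\to K_0\big(C(S^7_q)\big)\to\Z\to 0$, which is compatible with $K_0\cong\Z^2$ just as well as with $K_0\cong\Z[1]$. This index computation is not a routine imitation of the Toeplitz extension: the generator of $K_1$ of an odd Vaksman--Soibelman sphere lives in a matrix algebra over $C(S^5_q)$ and must be written down explicitly (this is essentially what \cite{HL04} and the graph-algebra computations do). So the proposal should be regarded as an outline with an acknowledged hole at its crucial step, not as a proof; to complete it you would either have to perform that Fredholm-index calculation or, as the paper does, cite it.
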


\begin{proof}
As mentioned, by \cite{Sau17}  the algebra $C(S^7_q)$ is isomorphic to the C*-algebra of a Vaksman-Soibelman sphere, whose K-theory is \mbox{$K_0\big(C(S^7_q)\big)\cong\Z$}, as computed in \cite{VS91}. From \cite[Sec.~4.3]{HL04} the K-group of these odd dimensional quantum spheres is generated by the class of the unit.
\end{proof}
We know from  \cite{LPR06} that
the group $K_0\big(C(S^4_q)\big)$ is isomorphic to $\Z^2$.

When restricted to the subalgebra $\mathcal{O}(S^4_q)$,
the character $\varepsilon:\mathcal{O}(S^7_q)\to\C$ reduces to the trivial representation, while $\pi$ reads
\begin{align}
\pi(y_0)\ket{k_1,k_2} &=q^{4+2 k_1+ 4 k_2}\ket{k_1,k_2} \\
\pi(y_1)\ket{k_1,k_2} &= q^{k_1 +2 k_2}\sqrt{1-q^{2 k_1}}\ket{k_1-1,k_2} \label{repc} \\
\pi(y_2)\ket{k_1,k_2} &= q^{2(k_1 +k_2+2)}\sqrt{1-q^{4(k_2+1)}}\ket{k_1,k_2 +1} \; .
\end{align}
These are the representations $\beta$ and $\sigma$   on 
$\mathcal{O}(S^4_q)$ constructed in \cite{LPR06} (and used there to compute  the instanton number of the quantum vector bundle given by the projection $p$).

\begin{lemma}
For all $b\in\mathcal{O}(S^4_q)$, the operator $(\pi-\varepsilon)(b)$ is of trace class.
\end{lemma}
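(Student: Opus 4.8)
The plan is to reduce the statement to a summability check on the generators and then invoke the ideal property of the trace-class operators. Since $\varepsilon$ is a character, one has the vector space decomposition $\mathcal{O}(S^4_q)=\C 1\oplus\ker\varepsilon$, and for any $b$ we may write $(\pi-\varepsilon)(b)=\pi\big(b-\varepsilon(b)1\big)$ with $b-\varepsilon(b)1\in\ker\varepsilon$; thus it suffices to show $\pi(b_0)\in\mathcal{L}^1(\mathcal{H})$ for every $b_0\in\ker\varepsilon$. Because $\varepsilon$ vanishes on each of $y_0,y_1,y_2$ (hence on $y_1^*,y_2^*$, recalling that $y_0^*=y_0$), the augmentation ideal $\ker\varepsilon$ is precisely the two-sided ideal of $\mathcal{O}(S^4_q)$ generated by these elements: every element of $\ker\varepsilon$ is a finite linear combination of monomials in $y_0,y_1,y_2,y_1^*,y_2^*$ of degree at least one.

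The key step is to show that each generator maps to a trace-class operator. For $y_0$ this is immediate: $\pi(y_0)$ is diagonal and positive, with $\tr_{\mathcal{H}}\pi(y_0)=\sum_{k_1,k_2\geq 0}q^{4+2k_1+4k_2}=\frac{q^4}{(1-q^2)(1-q^4)}<\infty$ since $0<q<1$. For $y_1$ and $y_2$ the operators are weighted shifts, so $\pi(y_1)^*\pi(y_1)$ and $\pi(y_2)^*\pi(y_2)$ are diagonal and their singular values can be read off directly from the explicit action of $\pi$: one gets singular values bounded by $q^{k_1+2k_2}$ for $\pi(y_1)$, and by $q^{2k_1+2k_2+4}$ for $\pi(y_2)$, each summable over $(k_1,k_2)\in\N^2$ as a product of geometric series. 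Hence $\pi(y_1),\pi(y_2)\in\mathcal{L}^1(\mathcal{H})$, and the same holds for the adjoints $\pi(y_1^*)=\pi(y_1)^*$ and $\pi(y_2^*)=\pi(y_2)^*$.

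To conclude, I would use that $\mathcal{L}^1(\mathcal{H})$ is a two-sided ideal in $\mathcal{B}(\mathcal{H})$, so that $J:=\pi^{-1}\big(\mathcal{L}^1(\mathcal{H})\big)$ is a two-sided ideal of $\mathcal{O}(S^4_q)$. By the previous step $J$ contains all the generators $y_0,y_1,y_2,y_1^*,y_2^*$, hence it contains the ideal they generate, namely $\ker\varepsilon$. Therefore $\pi(b_0)\in\mathcal{L}^1(\mathcal{H})$ for every $b_0\in\ker\varepsilon$, which is exactly what is needed. Concretely, this amounts to the observation that any monomial of degree at least one factors as a trace-class generator times a product of bounded operators, and trace-class times bounded is again trace class. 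I expect no serious obstacle here: the whole argument rests on the exponential decay of the matrix coefficients forced by the factors $q^{k_1},q^{k_2}$, so the only point to watch is the summability of the singular values, which holds precisely because $0<q<1$ (and would fail at $q=1$, consistent with the earlier remark that the classical case is not recovered from the quantum one).
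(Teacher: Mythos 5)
Your proposal is correct and is essentially the paper's argument: the paper's proof consists of the single observation that $(\pi-\varepsilon)(y_j)$ is of trace class for $j=0,1,2$, and you have simply supplied the implicit details (the ideal property of $\mathcal{L}^1(\mathcal{H})$, the identification of $\ker\varepsilon$ with the ideal generated by $y_0,y_1,y_2$ and their adjoints, and the explicit summability of the singular values of the weighted shifts). No gaps.
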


\begin{proof}
It is enough to observe that $(\pi-\varepsilon)(y_j)$ is of trace class for all $j=0,1,2$.
\end{proof}

Let us denote $\mathscr{E} = p\, (\mathcal{O}(S^4_q))^4$ the right $\mathcal{O}(S^4_q))$-module 
of  sections of the vector bundle of $p=u u^*$.
From Proposition \ref{prop:undertheassumption}, since $u$ has $2$ columns, $\mathrm{ch}_0(\mathscr{E})=\varepsilon(\tr\,p)=2$.
Moreover
$$
(\pi-\varepsilon)(\tr\, p)= q^{-2}(1+q^2-q^{-2}-q^4)\pi(y_0).
$$
Since $\sum_{k_1,k_2\in\N}q^{2k_1+4k_2}=1/(1-q^2)(1-q^4)$, we get
\begin{equation}\label{eq:wecomputed}
\mathrm{ch}_1(\mathscr{E})=\tr_{\mathcal{H}}\big(\pi(\tr\,p)-\varepsilon(\tr\,p)\big)=\frac{q^2+q^4-1-q^6}{(1-q^2)(1-q^4)}=-1 .
\end{equation}
The general theory then applies:

\begin{cor}\label{cor:54}
$K_0\big(C(S^4_q)\big)$ is a free $\Z$-module generated by $[1]$ and $[\mathscr{E}]$, and we have the relation
\begin{equation}\label{sofE}
4-4[\mathscr{E}]+[\mathscr{E}\ot_{\mathcal{O}(S^4_q)}\mathscr{E}]=0 .
\end{equation}
\end{cor}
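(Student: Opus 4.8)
The plan is to observe that Corollary \ref{cor:54} is a direct application of Proposition \ref{prop:2q} to the Hopf--Galois extension $\mathcal{O}(S^4_q)\subset\mathcal{O}(S^7_q)$, once one has assembled the ingredients already established above. Concretely, I would take $\mathscr{B}=\mathcal{O}(S^4_q)$, $\mathscr{A}=\mathcal{O}(S^7_q)$, with C*-enveloping algebra $B=C(S^4_q)$, and let $\mathscr{E}=p\,(\mathcal{O}(S^4_q))^4$ be the bimodule associated to the fundamental corepresentation $\tt{1}$, with $p=uu^*$.

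First I would check that all hypotheses of Proposition \ref{prop:2q} are in force. The assumptions of Proposition \ref{prop:undertheassumption} hold with the character $\varepsilon$ (restricting to the trivial representation) and the bounded *-representation $\pi$ displayed above, since the preceding lemma guarantees $(\pi-\varepsilon)(b)\in\mathcal{L}^1(\mathcal{H})$ for all $b\in\mathcal{O}(S^4_q)$. We have $K_0\big(C(S^4_q)\big)\cong\Z^2$ from \cite{LPR06}. Finally, $\mathscr{E}$ belongs to $\mathrm{Vect}_{\mathscr{A}}(\mathscr{B})$: by Lemma \ref{lemma:44} the pair $(u,u^*)$ is a trivializing pair, and by Lemma \ref{lemma:pulltriv} the associated module trivializes over $\mathscr{A}$; and the index computation \eqref{eq:wecomputed} gives $\mathrm{ch}_1(\mathscr{E})=-1=\pm1$.

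With the hypotheses verified, Proposition \ref{prop:2q} immediately yields that $(1,[\mathscr{E}])$ is a basis of $K_0\big(C(S^4_q)\big)$ together with the identity
\[
r^2-2r[\mathscr{E}]+[\mathscr{E}\otimes_{\mathcal{O}(S^4_q)}\mathscr{E}]=0 ,
\]
where $r=\mathrm{ch}_0(\mathscr{E})$. Since $u$ has two columns, the formula for $\mathrm{ch}_0$ yields $r=\varepsilon(\tr\,p)=2$, and substituting $r=2$ reproduces exactly the relation \eqref{sofE}.

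I do not expect any obstacle in the deduction itself, which is purely formal. The genuine difficulty lies entirely in the preparatory results: importing $K_0\big(C(S^4_q)\big)\cong\Z^2$ from \cite{LPR06}, checking the trace-class condition, and---most delicately---performing the index computation \eqref{eq:wecomputed} that pins down $\mathrm{ch}_1(\mathscr{E})=-1$, which requires the explicit action of $\pi$ on $y_0,y_1,y_2$ and the summation of the resulting geometric series.
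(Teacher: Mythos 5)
Your proposal is correct and follows exactly the paper's route: the paper introduces the corollary with the phrase ``The general theory then applies,'' meaning precisely the application of Proposition \ref{prop:2q} with $\mathscr{B}=\mathcal{O}(S^4_q)$, $\mathscr{A}=\mathcal{O}(S^7_q)$, the character $\varepsilon$ and representation $\pi$, the trace-class lemma, $K_0\big(C(S^4_q)\big)\cong\Z^2$ from \cite{LPR06}, and the values $\mathrm{ch}_0(\mathscr{E})=2$, $\mathrm{ch}_1(\mathscr{E})=-1$ established just before the statement. You have correctly identified both the formal deduction and where the real work lies (the index computation \eqref{eq:wecomputed}), so there is nothing to add.
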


As a further application of the general theory we compute the characteristic classes of modules associated to irreducible corepresentations of $H:=\mathcal{O}(SU_q(2))$.
For the time being, we only need the fact that irreducible corepresentations are labelled by $n\in\N$. Let $V_n$ be the vector space underlying the $n+1$ dimensional irreducible corepresentation, and call
\begin{equation}\label{eq:En}
\mathscr{E}_n:=\mathcal{O}(S^7_q)\;\Box^{H}\,V_n .
\end{equation}
the associated $\mathcal{O}(S^4_q)$-bimodule. In particular, $\mathscr{E}_1\cong\mathscr{E}$ is the right module in Corollary~\ref{cor:54}.

\begin{prop}\label{prop:55}
For every $n\geq 1$,
\beq\label{ch1n}
\mathrm{ch}_1([\mathscr{E}_n])=-\frac{1}{6}n(n+1)(n+2) .
\eeq
\end{prop}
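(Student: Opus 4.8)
The plan is to sidestep any direct computation of a projection for $\mathscr{E}_n$ and instead leverage that $\mathrm{ch}$ is a semiring homomorphism (Proposition~\ref{prop:undertheassumption}), the monoidality of the cotensor functor, and the representation theory of $SU_q(2)$, in order to reduce the statement to a linear recursion. Write $\mathscr{B}:=\mathcal{O}(S^4_q)$, $\mathscr{A}:=\mathcal{O}(S^7_q)$ and $H:=\mathcal{O}(SU_q(2))$, and set $a_n:=\mathrm{ch}_1(\mathscr{E}_n)$. First I would record the two inputs that feed the induction. Since $V_n$ has dimension $n+1$, Lemma~\ref{lemma:pulltriv} gives $\mathscr{E}_n\otimes_{\mathscr{B}}\mathscr{A}\cong\mathscr{A}\otimes V_n\cong\mathscr{A}^{n+1}$, so by point~\ref{en:provaC} of Proposition~\ref{prop:undertheassumption} the rank is $\mathrm{ch}_0(\mathscr{E}_n)=n+1$. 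The base values are $a_0=0$ (as $\mathscr{E}_0\cong\mathscr{B}$ is the trivial bimodule, whence $\pi(\tr p)-\varepsilon(\tr p)=0$) and $a_1=-1$, the latter being exactly \eqref{eq:wecomputed}.

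The geometric heart of the argument is the quantum Clebsch--Gordan rule: the corepresentations of $\mathcal{O}(SU_q(2))$ satisfy the same fusion rule as those of classical $SU(2)$, namely $V_1\otimes V_n\cong V_{n+1}\oplus V_{n-1}$ for all $n\geq 1$ (see e.g.~\cite{Maj95}). Applying the additive monoidal functor $\mathscr{A}\,\Box^{H}(\,\cdot\,)$ and using \eqref{eq:monoidal-1}--\eqref{eq:monoidal-2}, this transports to an isomorphism of $\mathscr{B}$-bimodules
\[
\mathscr{E}_1\otimes_{\mathscr{B}}\mathscr{E}_n\cong\mathscr{E}_{n+1}\oplus\mathscr{E}_{n-1} .
\]
Applying $\mathrm{ch}_1$ to both sides — it is additive under $\oplus$ by Proposition~\ref{prop:undertheassumption}\ref{en:provaA}, and on the left it is governed by the Leibniz rule \eqref{eq:provaB} with $\mathrm{ch}_0(\mathscr{E}_1)=2$, $\mathrm{ch}_1(\mathscr{E}_1)=-1$ and $\mathrm{ch}_0(\mathscr{E}_n)=n+1$ — I obtain
\[
a_{n+1}+a_{n-1}=2a_n-(n+1) , \qquad a_0=0,\quad a_1=-1 .
\]

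It then remains to solve this second-order recursion, which I would do simply by verifying that the proposed closed form $a_n=-\tfrac{1}{6}n(n+1)(n+2)$ satisfies it. The second difference of the cubic $f(n)=n(n+1)(n+2)$ is $f(n+1)-2f(n)+f(n-1)=6(n+1)$, so $a_{n+1}-2a_n+a_{n-1}=-(n+1)$ as required, while $a_0=0$ and $a_1=-1$ match the initial data. Since a second-order linear recursion is determined by its first two terms, a straightforward induction on $n$ then yields \eqref{ch1n} for every $n\geq 1$.

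I expect the only genuine obstacle to be justifying the fusion isomorphism at the level of $\mathscr{B}$-bimodules. This is not an ad hoc calculation: it follows formally from the abstract fusion rule for $SU_q(2)$ corepresentations together with monoidal functoriality \eqref{eq:monoidal-2}, which is the one place where faithful flatness of the Hopf--Galois extension enters. Everything else — the Leibniz behaviour of $\mathrm{ch}_1$, the identity $\mathrm{ch}_0(\mathscr{E}_n)=n+1$, and the elementary difference computation — is routine given the machinery already assembled. As a consistency check one may also argue analytically: in $\Z[t]/(t^2)$ one has $\mathrm{ch}(\mathscr{E}_n)=U_n(2-t)$, where $U_n$ is the Chebyshev polynomial of the second kind, so that $a_n=-U_n'(2)=-\tfrac{1}{6}n(n+1)(n+2)$.
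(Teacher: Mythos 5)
Your argument is correct and is essentially the paper's own proof: both rest on the Clebsch--Gordan rule $V_1\otimes V_n\cong V_{n+1}\oplus V_{n-1}$ transported through the monoidal cotensor functor via \eqref{eq:monoidal-1}--\eqref{eq:monoidal-2}, followed by the Leibniz rule \eqref{eq:provaB} with $\mathrm{ch}_0(\mathscr{E}_1)=2$, $\mathrm{ch}_1(\mathscr{E}_1)=-1$, $\mathrm{ch}_0(\mathscr{E}_n)=n+1$, and induction on the resulting second-order recursion. Your explicit verification of the closed form via the second difference of $n(n+1)(n+2)$ is just a slightly more detailed write-up of the paper's ``follows by induction'' step.
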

\begin{proof}
From the known decomposition $V_1\otimes V_n\cong V_{n+1}\oplus V_{n-1}$, using \eqref{eq:monoidal-1} and \eqref{eq:monoidal-2}, one gets the bimodule isomorphism
\begin{equation}\label{eq:casospeciale}
\mathscr{E}_1\otimes_{\mathcal{O}(S^4_q)}\mathscr{E}_n\cong\mathscr{E}_{n+1}\oplus\mathscr{E}_{n-1} .
\end{equation}
From \eqref{eq:provaB} we get
\[
\mathrm{ch}_1([\mathscr{E}_{n+1}]) =\mathrm{ch}_0([\mathscr{E}_1])\mathrm{ch}_1([\mathscr{E}_n])+\mathrm{ch}_0([\mathscr{E}_n])\mathrm{ch}_1([\mathscr{E}_1])-\mathrm{ch}_1([\mathscr{E}_{n-1}]) .
\]
Using $\mathrm{ch}_1([\mathscr{E}_1])=-1$, as computed in \eqref{eq:wecomputed}, and $\mathrm{ch}_0([\mathscr{E}_n])=\dim(V_n)=n+1$, we find
\[
\mathrm{ch}_1([\mathscr{E}_{n+1}])=2\mathrm{ch}_1([\mathscr{E}_n])-n-1-\mathrm{ch}_1([\mathscr{E}_{n-1}])  .
\]
Formula \eqref{ch1n} follows by induction on $n\geq 1$.
\end{proof}

\begin{rem}\label{rem:detq}
A special case of \eqref{eq:casospeciale} is the isomorphism $
\mathscr{E}_1\otimes_{\mathcal{O}(S^4_q)}\mathscr{E}_1\cong\mathscr{E}_2\oplus\mathscr{E}_0$,
where the free bimodule $\mathscr{E}_0$ is the analogue of the determinant line bundle of $\mathscr{E}_1$. With an abuse of notations we can denote it by $\mathscr{E}_1\wedge_{\mathcal{O}(S^4_q)}\mathscr{E}_1$.
Then, the relation \eqref{sofE} can be interpreted as the vanishing of the ``square'' of the Euler class in K-theory of the instanton bundle on $S^4_q$ given by:
\[
\chi(\mathscr{E}_1)= 1 - [\mathscr{E}_1] + [\mathscr{E}_1\wedge_{\mathcal{O}(S^4_q)}\mathscr{E}_1] = 2 - [\mathscr{E}_1] .
\]
This parallel the classical result in Example \ref{ex:S4}. 
\end{rem}

\subsection{Projections from corepresentations of $SU_q(2)$}\label{sec:4.4}

In this section we construct a trivializing pairs and then projections describing the vector bundles on the quantum $4$-sphere $S^4_q$ associated to finite-dimensional irreducible corepresentations of the Hopf algebra $\mathcal{O}(SU_q(2))$, generalizing the construction done in \cite{LPR06} for the fundamental one given by \eqref{eq:P} above.

We need the (rescaled) $q$-number and $q$-binomial coefficients defined by \cite[Sect.~2.1.2]{KS97}
\begin{align*}
\qqnum{n}:=\qqnum{n}_{q^2}:=\frac{1-q^{2n}}{1 - q^{2}} , \qquad\quad
\qbin{n}{k}:=\qbin{n}{k}_{q^2}:=
\frac{\prod_{i=k+1}^{n} (1-q^{2i})}{\prod_{j=1}^{n-k} (1-q^{2j}) } ,
\end{align*}
for all $n\in\N$ and $0\leq k\leq n$, with the convention that an empty product is equal to $1$. The following property will be useful in the computations:
\beq\label{pro-qqn}
q^{2k}\qqnum{n-k}+\qqnum{k}=\qqnum{n} .
\eeq

Recall that, like in the classical case, for every $n\in\N$ there is an irreducible corepresentation of $\mathcal{O}(SU_q(2))$ on a complex vector space $V_n$ of dimension $n+1$ given on a basis by a unitary matrix $\tt{n}\in M_{n+1}\big(\mathcal{O}(SU_q(2))\big)$. The general formula for $\tt{n}$ is for instance in \cite[Eq.~(3.11)]{Koo89} (with slighly different notations:
$(\tt{n})_i^j$ is $t^\ell_{i-\ell,j-\ell}$ in \cite{Koo89}, with $\ell=n/2$). Thus, for example, $\tt{1}$ is the matrix \eqref{eq:T1}, and
\[
\tt{2}=
\begin{pmatrix}
\alpha^2 & - (1+ q^{2})^{\half}  \alpha \gamma^* & q^2 
  (\gamma^* )^2
\\
(1+ q^{2})^{\half}  \gamma \alpha  & 1-(1+q^2) \gamma \gamma^* & 
- q (1+ q^{2})^{\half} \alpha^* \gamma^* 
\\
 \gamma^2  &  q^{-1}(1+ q^{2})^{\half} \gamma \alpha^* & (\alpha^* )^2
\end{pmatrix} \,.
\]
Note again that we count rows and columns starting from $0$.

\begin{prop}\label{prop:ne2}
Let $\uu{2}\in M_{4^2\times 3}\big(\mathcal{O}(S^7_q)\big)$ be the matrix with row $I$, labelled by a multi-index $I=(i,j)\in\{ 1, \dots, 4\}^2$, given by
\begin{equation}\label{U2}
(\uu{2})_I :=
 \left(u_i^0 u_j^0\, , \; 
(1+ q^{2})^{- \half}   ( q u_i^0 u_j^1+ u_i^1 u_j^0)\, , \;  
u_i^1 u_j^1 \right) .
\end{equation}
Then, this matrix is a partial isometry and transforms under the coaction $\delta_R$ with the matrix $\tt{2}$, that is:
\[
(\uu{2})^*\,\uu{2}=1_3 ,\qquad\quad
\delta_R(\uu{2})=\uu{2}\,\dot{\otimes}\,\tt{2} .
\]
\end{prop}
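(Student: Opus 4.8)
The plan is to verify the two stated identities directly, exploiting the fact that $\uu{2}$ is assembled from the entries of the fundamental trivializing matrix $u$ of \eqref{eq:Psi}, which already satisfies $u^*u=1_2$ and $\delta_R(u)=u\,\dot{\otimes}\,\tt{1}$. Conceptually, $\uu{2}$ is the $q$-symmetrization of $u\,\dot{\otimes}\,u$, and the proposition records that $\tt{2}$ is the symmetric-square subcorepresentation of $\tt{1}\,\dot{\otimes}\,\tt{1}$ arising in the quantum Clebsch--Gordan decomposition $V_1\otimes V_1\cong V_2\oplus V_0$; the three entries of each row of $\uu{2}$ are precisely the components in the $q$-symmetric basis.

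For the partial isometry identity $(\uu{2})^*\uu{2}=1_3$, I would compute the $3\times 3$ matrix entrywise. Each entry is a double sum $\sum_{i,j}$ over the multi-index $I=(i,j)$, and after expanding it becomes a sum of sandwiched expressions of the form $\sum_{i,j}(u_j^b)^*(u_i^a)^*u_i^c u_j^d$. The inner sum $\sum_i (u_i^a)^*u_i^c$ is the $(a,c)$-entry of $u^*u=1_2$, hence either the central unit or zero; since it is a scalar multiple of the identity it may be pulled through the remaining factors, collapsing each term to $\delta_{ac}\sum_j(u_j^b)^*u_j^d=\delta_{ac}\delta_{bd}$. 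The diagonal entries then reduce to $1$ and the off-diagonal ones vanish. The only point requiring attention is the middle diagonal entry, where the normalization $(1+q^2)^{-1/2}$ is chosen exactly so that the two surviving cross terms contribute $(1+q^2)^{-1}(q^2+1)=1$.

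For the coaction identity I would use that $\delta_R$ is a $*$-algebra homomorphism, so $\delta_R$ of each entry of $\uu{2}$ is obtained from $\delta_R(u_i^k)=\sum_{j}u_i^j\otimes(\tt{1})_j^k$ by multiplying out the products $\delta_R(u_i^a)\delta_R(u_j^b)$ in $\mathcal{O}(S^7_q)\otimes\mathcal{O}(SU_q(2))$. This produces, for each column $L\in\{0,1,2\}$, an expression whose first legs are the four products $u_i^0u_j^0$, $u_i^0u_j^1$, $u_i^1u_j^0$, $u_i^1u_j^1$ and whose second legs are degree-two words in $\alpha,\gamma,\alpha^*,\gamma^*$, which I then match against $\sum_K(\uu{2})_I^K\otimes(\tt{2})_K^L$. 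For the outer columns $L=0$ and $L=2$ the match follows from the single relations $\alpha\gamma=q\gamma\alpha$ and $\gamma^*\alpha^*=q\alpha^*\gamma^*$ respectively, which merge the two cross terms into the single $q$-symmetric combination appearing in $\uu{2}$.

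The main obstacle is the middle column $L=1$, whose target entry $(\tt{2})_1^1=1-(1+q^2)\gamma\gamma^*$ is not a monomial. Here one must collect the coefficient of each of the four products and use, in addition to $\alpha\gamma^*=q\gamma^*\alpha$, $\gamma\alpha^*=q\alpha^*\gamma$ and the normality $\gamma\gamma^*=\gamma^*\gamma$, the unitarity relations $\alpha^*\alpha+\gamma^*\gamma=1$ and $\alpha\alpha^*+q^2\gamma\gamma^*=1$ coming from $\tt{1}^*\tt{1}=\tt{1}\tt{1}^*=1_2$, so as to rewrite $\alpha\alpha^*$ and $\alpha^*\alpha$ in terms of $\gamma\gamma^*$ and produce the factor $1-(1+q^2)\gamma\gamma^*$. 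Once the four coefficients are checked to agree after applying these relations, collecting over $L\in\{0,1,2\}$ yields $\delta_R(\uu{2})=\uu{2}\,\dot{\otimes}\,\tt{2}$, completing the verification.
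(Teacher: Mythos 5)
Your proposal is correct and follows the same route as the paper, whose proof of Proposition \ref{prop:ne2} is precisely the ``direct explicit computation'' you carry out: the identity $(\uu{2})^*\uu{2}=1_3$ from the orthonormality $(u^a)^*u^c=\delta^{ac}$ of the columns of $u$, and the coaction identity by expanding $\delta_R(u_i^au_j^b)$ as an algebra morphism and matching against $\tt{2}$ via the $\mathcal{O}(SU_q(2))$ relations. Your detailed handling of the middle column (using $\alpha\alpha^*+q^2\gamma\gamma^*=1=\alpha^*\alpha+\gamma^*\gamma$ to produce $1-(1+q^2)\gamma\gamma^*$) checks out and fills in exactly what the paper leaves implicit.
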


\begin{proof}
The proof is a direct explicit computation using the fact that rows of $u$ are ``orthonormal'', meaning that $(u^j)^*\,u^k=\delta^{jk}$, and the fact that $\delta_R$ is an algebra morphism.
\end{proof}

It follows from the previous proposition and from Lemma \ref{lemma:44} that the matrix
\begin{equation} \label{P2}
\pp{2}:= \uu{2} (\uu{2})^*  
\end{equation}
is a projection with entries in $\mathcal{O}(S^4_q)$, and one has the isomorphism of right $\mathcal{O}(S^4_q)$-module
\[
\pp{2}\,\mathcal{O}(S^4_q)^{4^2}\cong\mathscr{E}_2 ,
\]
with $\mathscr{E}_2$ given in \eqref{eq:En}.\footnote{More precisely, $\pp{2}\,\mathcal{O}(S^4_q)^{4^2}$ is isomorphic to the module associated to the corepresentation $V_2^\vee$, but
irreducible corepresentations of $\mathcal{O}(SU_q(2))$ are self-dual.\label{footnota}}

Together with the $q$-symmetrized vector in \eqref{U2} there is also a $q$-antisymmetrized one. 
Let $\ww{0}\in\mathcal{O}(S^7_q)^{4^2}$ be the column vector with $I$-th entry
\begin{equation}\label{pro-V}
(\ww{0})_I := (1+ q^{2})^{- \half}  \left(u_i^0 u_j^1 - q u_i^1 u_j^0 \right),
 \quad I=(i,j)\in\{1,\ldots,4\}^2 .
 \end{equation}
Its elements belong to $\mathcal{O}(S^4_q)$ being proportional to the $q$-minors 
\[
m_{ij}:= u_i^0 u_j^1   - q u_i^1 u_j^0
\]
of the matrix $u$, which transform under $\delta_R$ with the quantum determinant:
\[
\delta_R(m_{ij})=m_{ij} \ot det_q \, , \qquad  \forall \, i,j=1, \dots, 4,
\]
for $det_q= \alpha \alpha^* +q^2 \gamma^* \gamma=1$ (cf.~\cite[Remark 1]{LPR06}).
In particular $m_{ii} = u_i^0 u_i^1   - q u_i^1 u_i^0 = 0$.

One has $(\ww{0})^* \uu{2}=0$ and $(\ww{0})^*\ww{0}=1$ from which we deduce that
\begin{equation}\label{proj-Q}
\qq{0}:=\ww{0}(\ww{0})^* \in M_{4^2}(\mathcal{O}(S^4_q))
\end{equation}
is a projection orthogonal to $\pp{2}$, von Neumann equivalent to the trivial idempotent $(\ww{0})^*\ww{0}=1 \in M_1(\mathcal{O}(S^4_q))$ (since the entries of the partial isometry $\ww{0}$ belong to $\mathcal{O}(S^4_q)$) and describes the trivial determinant line bundle of $\mathscr{E}_1$ (see Remark \ref{rem:detq}).

\begin{rem}\label{appA}
It is instructive to see explicitly the triviality in K-theory of the projection $\qq{0}$.
For $\eta(b):= \tr_{\mathcal{H}}\big(\pi(b)-\varepsilon(b)\big)$ for each $b \in \mathcal{O}(S^4_q)$, from
 \eqref{repc} we compute
\begin{align}\label{rep-pi-quad}
\eta(y_0)&=  (1+q^{-4}) (1+q^2) \eta(y_0^2) \nonumber \\
\eta(y_1^* y_1)&= \eta(y_1 y_1^*)=q^{-6}(1+q^4) \eta (y_0^2) , \quad
\eta(y_2^* y_2)= \eta(y_2 y_2^*)=\eta (y_0^2) 
\end{align}
with 
$\eta(y_0^2)= q^8 (1-q^4)^{-1} (1-q^8)^{-1}$. For the matrix trace we compute
\begin{align*}
 (1+q^2) \tr(\qq{0}) &= (1+q^2) -2 y_0 \left(q^2+q^{-2} \right) +  y_0^2 \left(1+q^{-2}+ q^{-6}+q^4 \right)
\\
& \quad + 2q^2 y_1 y_1^* + 2 q^4 y_1^* y_1   + (1 +q^{-6}) y_2 y_2^* + (q^{-2}+  q^4) y_2^* y_2 
\\
&= (1+q^2) + (1- q^{-2} )  
\Big( - 2 (q^2 -1) y_0  + (1 - q^{-8})(q^4 -q^2 -1 )  y_0^2 
\\ & \hspace{4.5cm}
+ (1 - q^{-8}) (q^4  +q^2 +1 ) y_2^* y_2 \Big) .
\end{align*}
It then follows that $\mathrm{ch}_0([\qq{0}])=\epsilon(\tr(\qq{0}))=1$ and, 
by using  
\eqref{rep-pi-quad}, that $\eta$ vanishes on the term in parenthesis so that 
$\mathrm{ch}_1([\qq{0}]) = 0$.
\end{rem}

We now pass to the module $\mathscr{E}_n$ associated to the general representation $V_n$, $n\geq 2$.
Given two column vectors $v \in \mathcal{O}(S^7_q) \ot \C^m$ and
$w \in \mathcal{O}(S^7_q) \ot \C^{k}$, $m,k \in N$,  we write $v \dott w$  for the vector in $\mathcal{O}(S^7_q) \ot \C^{m k}$ of components $v_i w_j$, with $i\in\{1, \dots, m\}$ and $j\in\{1, \dots, k\}$. For instance, the matrices $\uu{2}$ in \eqref{U2}   and $W$ in \eqref{pro-V} are written as 
\begin{align*}
\uu{2} &= 
 \left(u^0 \dott u^0\, , \; 
\qqnum{2}^{- \half}   ( q u^0 \dott u^1+ u^1 \dott u^0)\, , \;  
u^1 \dott u^1 \right)\; , \; \\
\ww{0} &=
\qqnum{2}^{- \half}     (u^0 \dott u^1 -q u^1\dott u^0)
\end{align*}

\begin{prop}\label{prop:58}
Let \mbox{$\uu{1}:=u=(u^0,u^1)$} be the matrix in \eqref{eq:Psi}.
For $n\geq 2$, let $\uu{n}\in M_{4^n\times (n+1)}\big(\mathcal{O}(S^7_q)\big)$ be the matrix with $k$-th column defined recursively by:
\begin{equation}\label{Un-cl}
\uu{n}^k = \qqnum{n}^{-\half}  \left( q^k \qqnum{n-k}^{\half}  u^0 \dott \uu{n-1}^k +   \qqnum{k}^{\half}  u^1 \dott \uu{n-1}^{k-1} \right) ,
\end{equation}
for $k\in\{0,1, \dots, n\}$ (with the convention that $\uu{n-1}^k:=0$ for $k=n$).
Then, this matrix is a partial isometry and transforms under the coaction $\delta_R$ with the matrix $\tt{n}$, that is:
\begin{gather}
(\uu{n})^*\,\uu{n}=1_{n+1} , \label{eq:58a} \\
\delta_R(\uu{n})=\uu{n}\,\dot{\otimes}\,\tt{n} . \label{eq:58b}
\end{gather}
\end{prop}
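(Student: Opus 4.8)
The plan is to argue by induction on $n$, the base case $n=1$ being the defining coaction \eqref{eq:Coa} together with the relation $u^*u=1_2$ for the matrix $u$ in \eqref{eq:Psi}. In the inductive step I fix $n\geq 2$, assume \eqref{eq:58a} and \eqref{eq:58b} for $\uu{n-1}$, and prove them for $\uu{n}$ directly from the recursion \eqref{Un-cl}. Throughout I use that $\delta_R$ is an algebra morphism and that the columns of $u$ are orthonormal, i.e.\ $(u^a)^*u^c=\delta^{ac}$ (the entries of $u^*u=1_2$), exactly as in the proof of Proposition \ref{prop:ne2}. The two identities can be handled independently, each resting on its own building block: \eqref{eq:58a} on the orthonormality of $u$ and the $q$-number identity \eqref{pro-qqn}, and \eqref{eq:58b} on the quantum Clebsch--Gordan embedding of $V_n$ into $V_1\otimes V_{n-1}$.

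For the partial isometry property \eqref{eq:58a} I would expand $(\uu{n}^l)^*\,\uu{n}^k$ using \eqref{Un-cl}. The key observation is that the inner product of $\dott$-products factorizes,
\[
(u^a\dott\uu{n-1}^b)^*(u^c\dott\uu{n-1}^{b'})=\sum_{i,J}(\uu{n-1}^b)_J^*\,(u_i^a)^*u_i^c\,(\uu{n-1}^{b'})_J ,
\]
and since $\sum_i(u_i^a)^*u_i^c=\delta^{ac}$ is a scalar independent of $J$, this collapses to $\delta^{ac}(\uu{n-1}^b)^*\uu{n-1}^{b'}=\delta^{ac}\delta_{bb'}$ by the inductive hypothesis. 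Thus the two cross terms vanish and the two diagonal terms survive, giving
\[
(\uu{n}^l)^*\,\uu{n}^k=\frac{\delta_{lk}}{\qqnum{n}}\big(q^{2k}\qqnum{n-k}+\qqnum{k}\big),
\]
which equals $\delta_{lk}$ by \eqref{pro-qqn}. This proves \eqref{eq:58a}.

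For the covariance \eqref{eq:58b} I would first record that the Kronecker product matrix $M$ whose columns are the $\dott$-products $u^a\dott\uu{n-1}^b$ transforms with the tensor corepresentation, $\delta_R(M)=M\,\dot{\ot}\,(\tt{1}\otimes\tt{n-1})$; this follows immediately from \eqref{eq:Coa}, the inductive hypothesis \eqref{eq:58b} for $\uu{n-1}$, and multiplicativity of $\delta_R$. The recursion \eqref{Un-cl} says precisely that $\uu{n}=M\,C$, where $C$ is the \emph{scalar} $2n\times(n+1)$ matrix of $q$-Clebsch--Gordan coefficients with nonzero entries $C^k_{(0,k)}=\qqnum{n}^{-\half}q^k\qqnum{n-k}^{\half}$ and $C^k_{(1,k-1)}=\qqnum{n}^{-\half}\qqnum{k}^{\half}$. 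Since $C$ has scalar entries, $\delta_R$ passes through it, and one computes
\[
\delta_R(\uu{n})=\delta_R(M)\,C=M\,\dot{\ot}\,\big((\tt{1}\otimes\tt{n-1})\,C\big),
\qquad
\uu{n}\,\dot{\ot}\,\tt{n}=M\,\dot{\ot}\,\big(C\,\tt{n}\big).
\]
Hence \eqref{eq:58b} is \emph{equivalent} to the purely scalar intertwining relation $(\tt{1}\otimes\tt{n-1})\,C=C\,\tt{n}$, i.e.\ to the statement that $C$ embeds $\tt{n}$ as the top summand of $\tt{1}\otimes\tt{n-1}$.

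I expect this last intertwining relation to be the main obstacle, as it is the only place where the explicit coefficients of \eqref{Un-cl} must be matched with Koornwinder's matrix $\tt{n}$ from \cite{Koo89}. I would establish it either by invoking the known $q$-Clebsch--Gordan coefficients for $SU_q(2)$ \cite{KS97} and checking that they coincide with the coefficients above, or by inserting the closed forms of the entries of $\tt{1}$, $\tt{n-1}$ and $\tt{n}$ and reducing $(\tt{1}\otimes\tt{n-1})C=C\tt{n}$ to the three-term recurrences satisfied by the little $q$-Jacobi matrix elements, where \eqref{pro-qqn} reappears. As a shortcut for the identification, note that \eqref{eq:58a} already shows $\uu{n}$ to be a partial isometry, so the transformation matrix $T$ it produces is automatically a unitary $(n+1)$-dimensional corepresentation; being irreducible it is equivalent to $\tt{n}$, and matching a single row fixes the normalization to give equality.
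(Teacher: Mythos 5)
Your argument for \eqref{eq:58a} is correct and complete: the factorization of $(u^a\dott\uu{n-1}^b)^*(u^c\dott\uu{n-1}^{b'})$ through the scalar $\sum_i(u_i^a)^*u_i^c=\delta^{ac}$, followed by the inductive hypothesis and the identity \eqref{pro-qqn}, is exactly what makes the columns of $\uu{n}$ orthonormal (the paper dismisses this step as obvious, but your computation is the one intended). Likewise, your reduction of \eqref{eq:58b} to the scalar relation $(\tt{1}\otimes\tt{n-1})\,C=C\,\tt{n}$ is sound in the direction you need: that relation implies \eqref{eq:58b}, and the converse (your claimed ``equivalence'') is not required, so nothing is lost there.

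The gap is that this intertwining relation --- which you yourself flag as ``the main obstacle'' --- is never actually proven. Your first two options defer to \cite{KS97} or to an unperformed computation with Koornwinder's matrix elements, and the third option (the ``shortcut'') is circular: the fact that $\uu{n}$ is a partial isometry does not by itself produce a matrix $T\in M_{n+1}(H)$ with $\delta_R(\uu{n})=\uu{n}\,\dot{\ot}\,T$. The existence of such a $T$ is equivalent to the image of your coefficient matrix $C$ being a subcomodule of $V_1\otimes V_{n-1}$, which is precisely the Clebsch--Gordan statement to be established; and even granting existence and irreducibility, $T$ is then only determined up to conjugation by an intertwiner, so ``matching a single row'' does not force $T=\tt{n}$ without a further argument. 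The paper closes exactly this gap with a cheaper device: since $\uu{n}^0=u^0\dott\uu{n-1}^0$, one computes $\delta_R(\uu{n}^0)$ directly, using only the explicit first column $(\tt{n})_k^0=\qbin{n}{k}^{\half}\gamma^k\alpha^{n-k}$ and the $q$-Pascal identity built into \eqref{Un-cl}, obtaining $\delta_R(\uu{n}^0)=\sum_k\uu{n}^k\otimes(\tt{n})_k^0$; then coassociativity, $(\delta_R\otimes\id)\delta_R(\uu{n}^0)=(\id\otimes\Delta)\delta_R(\uu{n}^0)$, together with the linear independence of the elements $(\tt{n})_k^0$ in $H$, yields $\delta_R(\uu{n}^k)=\sum_j\uu{n}^j\otimes(\tt{n})_j^k$ for all $k$. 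In your language this amounts to checking only the $0$-th column of $(\tt{1}\otimes\tt{n-1})C=C\tt{n}$ and letting the Hopf-algebra structure supply the remaining $n$ columns; I recommend adopting that device, or else actually carrying out the comparison with the $q$-Clebsch--Gordan coefficients in one of your first two options.
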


\begin{proof}
We prove the statements by induction on $n$. The equality \eqref{eq:58a} is obvious: if the columns of $u$ and $\uu{n-1}$ are orthonormal, the columns of $\uu{n}$ are orthonormal as well.

Concerning \eqref{eq:58b}, the base step follows from the definition \eqref{eq:Coa} of the coaction.
Assume, then, that for some $n\geq 2$, $\delta_R(\uu{n-1})=\uu{n-1}\,\dot{\otimes}\,\tt{n-1}$.
We need the explicit expression of the first column of $\tt{n}$, given by $(\tt{n})_{k}^0=\qbin{n}{k}^{\half} \gamma^k \alpha^{n-k}$.
Using $\delta_R(u^0) = u^0 \otimes \alpha + u^1 \otimes \gamma$, and using the inductive hypothesis, we get
\begin{align*}
\delta_R(\uu{n}^0) & = \delta_R(u^0 \dott \uu{n-1}^0) \\
& = \sum_{k=0}^{n-1} \Big( u^0 \dott \uu{n-1}^k \otimes \alpha \,(\tt{n-1})_k^0
+ u^1 \dott \uu{n-1}^k \otimes \gamma\, (\tt{n-1})_k^0 \Big)
\\
& = \sum_{k=0}^{n-1} \qbin{n-1}{k}^{\half} \Big( q^ku^0 \dott \uu{n-1}^k \otimes  \gamma^k \alpha^{n-k}
+ u^1 \dott \uu{n-1}^k \otimes\gamma^{k+1} \alpha^{n-1-k} \Big)
\\
& = \sum_{k=0}^{n-1} \qbin{n-1}{k}^{\half} q^ku^0 \dott \uu{n-1}^k \otimes  \gamma^k \alpha^{n-k}
+ \sum_{k=1}^{n} \qbin{n-1}{k-1}^{\half}u^1 \dott \uu{n-1}^{k-1} \otimes\gamma^{k} \alpha^{n-k}
\\
& =  \qqnum{n}^{-\half}\sum_{k=0}^{n}   \left( q^k \qqnum{n-k}^{\half}  u^0 \dott \uu{n-1}^k +   \qqnum{k}^{\half}  u^1 \dott \uu{n-1}^{k-1} \right) \otimes \qbin{n}{k}^{\half} \gamma^{k} \alpha^{n-k} .
\end{align*}
Thus,
\begin{equation}\label{eq:thisiseq}
\delta_R(\uu{n}^0)= \sum_{k=0}^{n}\uu{n}^k\otimes (\tt{n})_k^0 .
\end{equation}
This yields the statement to be proven, since applying again the coaction  we find
\begin{align*}
\sum_{k=0}^{n}\delta_R(\uu{n}^k)\otimes (\tt{n})_k^0 
&=(\delta_R\otimes\id)\delta_R(\uu{n}^0) \\
&=(\id\otimes\Delta)\delta_R(\uu{n}^0)= \sum_{j,k=0}^{n}\uu{n}^j\otimes (\tt{n})_j^k \otimes (\tt{n})_k^0\, .
\end{align*}
Since the elements $(\tt{n})_k^0$ are linearly independent, from the latter identity we deduce that $\delta_R(\uu{n}^k)=\sum_{j=0}^{n}\uu{n}^j\otimes (\tt{n})_j^k$ for all $0\leq k\leq n$.
\end{proof}

It follows from the previous proposition and from Lemma \ref{lemma:44} that the matrix
\begin{equation} \label{Pn}
\pp{n}:= \uu{n} (\uu{n})^*  
\end{equation}
is a projection with entries in $\mathcal{O}(S^4_q)$ 
and one has the isomorphism of right $\mathcal{O}(S^4_q)$-module
\[
\pp{n}\,\mathcal{O}(S^4_q)^{4^n}\cong\mathscr{E}_n ,
\]
with $\mathscr{E}_n$ given in \eqref{eq:En} (see again the footnote at page \pageref{footnota}).

We obtain a different family of projections by a kind of partial $q$-antisymmetrization procedur similar to \eqref{pro-V}.

\begin{prop}
For $n\geq 0$, let $\ww{n}\in M_{4^{n+2}\times (n+1)}\big(\mathcal{O}(S^7_q)\big)$ be the matrix with $k$-th column given by:
\begin{equation}\label{Wn-cl}
\ww{n}^k = \qqnum{n+2}^{-\half}  \left( \qqnum{k+1}^{\half}  u^0 \dott \uu{n+1}^{k+1} - q^{k+1}\qqnum{n+1-k}^{\half}  u^1 \dott \uu{n+1}^k \right) ,
\end{equation}
for $k\in\{0,1, \dots, n\}$. Then,
\begin{align}
(\ww{n})^*\,\ww{n} &=1_{n+1} , \label{eq:59a} \\
(\ww{n-1})^*\,\uu{n+1} &=0_n , \qquad (\text{for }n\geq 1) \label{eq:59b} \\
\delta_R(\ww{n})={} & \ww{n}\,\dot{\otimes}\,\tt{n} . \label{eq:59c}
\end{align}
\end{prop}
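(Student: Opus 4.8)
The plan is to deduce the two orthonormality relations \eqref{eq:59a} and \eqref{eq:59b} from a single algebraic lemma about dotted products, and to prove the coaction identity \eqref{eq:59c} by first establishing it for the $0$-th column and then bootstrapping to all columns exactly as in the proof of Proposition \ref{prop:58}.

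The basic tool is the following observation, for column vectors paired via $v^*w=\sum_\alpha v_\alpha^*w_\alpha$. Using that the columns of $u=\uu{1}$ and of $\uu{m}$ are orthonormal, i.e. $\sum_i(u^a)_i^*(u^b)_i=\delta^{ab}$ and $(\uu{m})^*\uu{m}=1$, one gets for all $a,b\in\{0,1\}$ and all admissible $k,l$
\[
(u^a\dott\uu{m}^k)^*(u^b\dott\uu{m}^l)=\sum_{i,p}(\uu{m}^k)_p^*(u^a)_i^*(u^b)_i(\uu{m}^l)_p=\delta^{ab}\,(\uu{m}^k)^*\uu{m}^l=\delta^{ab}\delta^{kl},
\]
the scalar $\delta^{ab}=\sum_i(u^a)_i^*(u^b)_i$ being pulled through the remaining factors. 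Granting this, \eqref{eq:59a} is immediate: in $(\ww{n}^k)^*\ww{n}^l$ the two mixed terms carry a factor $\delta^{01}=0$ and vanish, while the two surviving terms give $\qqnum{n+2}^{-1}\big(\qqnum{k+1}+q^{2(k+1)}\qqnum{n+1-k}\big)\delta^{kl}=\delta^{kl}$ by the identity \eqref{pro-qqn}, since $\qqnum{k+1}+q^{2(k+1)}\qqnum{n+1-k}=\qqnum{n+2}$. The same lemma gives \eqref{eq:59b}: expanding $(\ww{n-1}^k)^*\uu{n+1}^l$ and inserting \eqref{Un-cl} for $\uu{n+1}^l$, every contribution contains a factor $\delta^{01}$ or $\delta^{10}$ (hence vanishes) unless $l=k+1$, in which case the two surviving terms are equal with opposite sign and cancel.

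For the coaction identity \eqref{eq:59c} I would argue as in Proposition \ref{prop:58}. It suffices to prove the single relation $\delta_R(\ww{n}^0)=\sum_{k=0}^{n}\ww{n}^k\otimes(\tt{n})_k^0$: applying $(\delta_R\otimes\id)$ and $(\id\otimes\Delta)$ to it, invoking coassociativity, and using that the entries $(\tt{n})_k^0=\qbin{n}{k}^{\half}\gamma^k\alpha^{n-k}$ of the first column of $\tt{n}$ are linearly independent in $\mathcal{O}(SU_q(2))$, one recovers $\delta_R(\ww{n}^k)=\sum_j\ww{n}^j\otimes(\tt{n})_j^k$ for every $k$, which is \eqref{eq:59c}. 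To establish the $0$-th column relation I would compute $\delta_R(\ww{n}^0)$ directly: since $\ww{n}^0=\qqnum{n+2}^{-\half}\big(u^0\dott\uu{n+1}^1-q\,\qqnum{n+1}^{\half}u^1\dott\uu{n+1}^0\big)$, I apply $\delta_R(u^0)=u^0\otimes\alpha+u^1\otimes\gamma$ and $\delta_R(u^1)=-q\,u^0\otimes\gamma^*+u^1\otimes\alpha^*$ together with the coaction of $\uu{n+1}^0$ and $\uu{n+1}^1$ from Proposition \ref{prop:58}. Because the family $\{u^a\dott\uu{n+1}^b\}$ is orthonormal, hence linearly independent, by the lemma above, the resulting identity can be checked coefficient by coefficient against the expansion of $\sum_k\ww{n}^k\otimes(\tt{n})_k^0$.

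The main obstacle is precisely this last computation. The coaction of $\uu{n+1}^1$ involves the \emph{second} column $(\tt{n+1})_j^1$ of $\tt{n+1}$, not merely the first, so one must either insert its explicit form (available, e.g., in the displayed $\tt{2}$) or unfold the recursion \eqref{Un-cl}, and then reorganise the many products of $\alpha,\gamma,\alpha^*,\gamma^*$ using the $\mathcal{O}(SU_q(2))$ relations and the $q$-identity \eqref{pro-qqn} until everything collapses onto the monomials $\gamma^k\alpha^{n-k}$ with exactly the coefficients dictated by the $\ww{n}^k$. An alternative that sidesteps the heaviest bookkeeping is representation-theoretic: by \eqref{eq:59b} with $n\mapsto n+1$ the columns of $\ww{n}$ are orthonormal and orthogonal to those of $\uu{n+2}$, so they span the complement of $\uu{n+2}$ inside the comodule generated by $\{u^a\dott\uu{n+1}^b\}\cong V_1\otimes V_{n+1}\cong V_{n+2}\oplus V_n$, which is a copy of $V_n$; since each $\ww{n}^k$ is a weight vector of weight $n-2k$ (read off by restricting $\delta_R$ to the diagonal torus) and $V_n$ is the unique irreducible corepresentation of its dimension, the transformation matrix equals $\tt{n}$ up to diagonal phases, which are then fixed by unitarity together with the explicit real form of \eqref{Wn-cl}.
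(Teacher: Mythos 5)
Your treatment of \eqref{eq:59a} and \eqref{eq:59b} is correct and is exactly the paper's (one-line) argument: both follow from the orthonormality relations $(u^a)^*u^b=\delta^{ab}$ and $(\uu{n+1})^*\uu{n+1}=1$, with the cancellation in \eqref{eq:59b} occurring only at $l=k+1$ and the identity \eqref{pro-qqn} closing \eqref{eq:59a}. The reduction of \eqref{eq:59c} to the single identity $\delta_R(\ww{n}^0)=\sum_{k}\ww{n}^k\otimes(\tt{n})_k^0$ also matches the paper. The genuine gap is that you never actually prove this identity. Your primary route (expanding $\delta_R$ of $u^0\dott\uu{n+1}^1$ and $u^1\dott\uu{n+1}^0$ directly) founders, as you yourself note, on the need for the second column of $\tt{n+1}$, and the computation is left undone. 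Your fallback representation-theoretic argument correctly identifies the span of the columns of $\ww{n}$ as the $V_n$-summand of $V_1\otimes V_{n+1}$ and each column as a weight vector, but this only shows that the columns transform with $D\,\tt{n}\,D^{-1}$ for some diagonal unitary $D$; ``unitarity together with the explicit real form of \eqref{Wn-cl}'' does not force $D$ to be scalar without computing at least one off-diagonal matrix coefficient for each adjacent pair of weights --- which is precisely the computation you are trying to avoid.

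The paper closes this gap with an induction on $n$ built on the recursion
\[
\qqnum{n+2}^{\half}\,\ww{n}^0 \;=\; q\,\qqnum{n}^{\half}\; u^0\dott\ww{n-1}^0 \;+\; \qqnum{n+1}^{\half}(1+q^2)^{\half}\;\ww{0}\dott\uu{n}^0 ,
\]
which follows from \eqref{Wn-cl} and \eqref{Un-cl}. The point of this rewriting is that every object on the right-hand side has a coaction expressible through \emph{first columns only}: $\delta_R(u^0)=u^0\otimes\alpha+u^1\otimes\gamma$; $\delta_R(\ww{n-1}^0)$ is given by the inductive hypothesis; $\delta_R(\uu{n}^0)$ is given by \eqref{eq:thisiseq}; and $\ww{0}$ is coinvariant, its entries being proportional to the $q$-minors $m_{ij}$, which transform with the quantum determinant $=1$. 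Expanding, using $\alpha\gamma=q\gamma\alpha$ and \eqref{pro-qqn}, and comparing with the analogous recursion for the general column $\ww{n}^k$, the expression collapses onto $\sum_k\ww{n}^k\otimes(\tt{n})_k^0$. If you wish to keep your own approach you must either carry out the direct computation with the explicit form of $(\tt{n+1})_j^1$, or supply the missing phase-fixing computation in the representation-theoretic argument; as written, neither route is complete.
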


\begin{proof}\noeqref{eq:59a}%
The first two identities are a consequence of orthonormality of the columns of $u$ and $\uu{n+1}$.
For \eqref{eq:59c}, similarly to the proof of Proposition~\ref{prop:58}, it is enough to prove it for the $0$-th column, that is
\begin{equation}\label{eq:Wind}
\delta_R(\ww{n}^0)= \sum_{k=0}^{n}\ww{n}^k\otimes (\tt{n})_k^0 .
\end{equation}
We prove this by induction on $n\geq 0$.

From \eqref{Wn-cl}, one gets the recursive formula for $\ww{n}^k$:
\begin{multline}\label{rec-formula-Vk}
\qqnum{n+2}^{\half} \ww{n}^k =q^{k+1} \qqnum{n-k}^{\half}  
u^0 \dott \ww{n-1}^k +q \qqnum{k}^{\half}  \, u^1 \dott \ww{n-1}^{k-1}  \\ \qquad +  \qqnum{n+1}^{\half}(1+q^2)^{\half} {\ww{0}} \dott \uu{n}^{k}  
\end{multline}
In particular, for $k=0$ we get
\beq\label{rec-Vk0}
\qqnum{n+2}^{\half} \ww{n}^0 = q \qqnum{n}^{\half} u^0 \dott \ww{n-1}^0 + 
\qqnum{n+1}^{\half}(1+q^2)^{\half}  \ww{0} \dott \uu{n}^0 . 
\eeq
Assume that \eqref{eq:Wind} holds for $\ww{n-1}$, for some $n\geq 1$.
Applying $\delta_R$ to \eqref{rec-Vk0}, using $\delta_R(u^0) = u^0 \otimes \alpha + u^1 \otimes \gamma$ and
$\alpha\gamma=q\gamma\alpha$, using the explicit expression of 
$(\tt{n})_k^0$, \eqref{eq:thisiseq}, and the inductive hypothesis we prove that \eqref{eq:Wind} holds for $\ww{n}$.
\end{proof}

It follows from the previous proposition and from Lemma \ref{lemma:44} that the matrix
\begin{equation} \label{Qn}
\qq{n}:= \ww{n} (\ww{n})^*  
\end{equation}
is a projection with entries in $\mathcal{O}(S^4_q)$, and one has the isomorphism of right $\mathcal{O}(S^4_q)$-module
\[
\qq{n}\,\mathcal{O}(S^4_q)^{4^{n+2}}\cong\mathscr{E}_n .
\]
Thus, $\pp{n}$ and $\qq{n}$ are von Neumann equivalent. The explicit equivalence is realized by the partial isometry $\uu{n}(\ww{n})^*$, whose entries are coinvariant, hence in $\mathcal{O}(S^4_q)$.

From \eqref{eq:59b} we see that $\pp{n+1}$ and $\qq{n-1}$ are orthogonal, for all $n\geq 1$. So, their sum
$\pp{n+1}+\qq{n-1}$ is also a projection.
We already know from the corepresentation theory of $SU_q(2)$, and being the cotensor product an additive monoidal functor, that
\[
(\pp{n+1}+\qq{n-1})\mathcal{O}(S^4_q)^{4^{n+1}}\cong\mathscr{E}_{n+1} \oplus\mathscr{E}_{n-1}\cong\mathscr{E}_1\otimes_{\mathcal{O}(S^4_q)}\mathscr{E}_n .
\]
(In fact, we already used the second isomorphism to prove Proposition \ref{prop:55}.) 

The isomorphism between the first and third module can be proved explicitly, without using the general theory of cotensor products, as shown below.

\begin{prop}
For all $n\geq 1$, one has the isomorphism of right $\mathcal{O}(S^4_q)$-modules
\[
(\pp{n+1}+\qq{n-1})\mathcal{O}(S^4_q)^{4^{n+1}}\cong \mathscr{E}_1\otimes_{\mathcal{O}(S^4_q)}\mathscr{E}_n .
\]
\end{prop}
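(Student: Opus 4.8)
The plan is to exhibit an explicit orthogonal change of basis identifying the two projections, so that they in fact coincide rather than being merely isomorphic. First I would record the two trivializing pairs provided by Lemma~\ref{lemma:44}: $(u,u^*)$ for $\mathscr{E}_1$ and $(\uu{n},(\uu{n})^*)$ for $\mathscr{E}_n$. Then, by Proposition~\ref{prop:UV} and formula~\eqref{eq:UV}, a trivializing pair for $\mathscr{E}_1\otimes_{\mathcal{O}(S^4_q)}\mathscr{E}_n$ is $(U,U^*)$, where $U\in M_{4^{n+1}\times 2(n+1)}\big(\mathcal{O}(S^7_q)\big)$ has the column labelled by $(k_1,k_2)\in\{0,1\}\times\{0,\ldots,n\}$ equal to $u^{k_1}\dott\uu{n}^{k_2}$; the resulting projection is $P:=UU^*$, with $\mathscr{E}_1\otimes_{\mathcal{O}(S^4_q)}\mathscr{E}_n\cong P\,\mathcal{O}(S^4_q)^{4^{n+1}}$. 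On the target side, writing $\widetilde{U}:=\big(\uu{n+1}\,\big|\,\ww{n-1}\big)$ for the $4^{n+1}\times 2(n+1)$ matrix obtained by juxtaposing the columns of $\uu{n+1}$ and $\ww{n-1}$, I would note that $\widetilde{U}\,\widetilde{U}^*=\pp{n+1}+\qq{n-1}$.

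The heart of the argument is to show $\widetilde{U}=U\,O$ for a \emph{scalar} orthogonal matrix $O\in M_{2(n+1)}(\C)$. Reading off the recursions~\eqref{Un-cl} and~\eqref{Wn-cl}, one sees that the columns $u^0\dott\uu{n}^0$ and $u^1\dott\uu{n}^n$ of $U$ equal $\uu{n+1}^0$ and $\uu{n+1}^{n+1}$ respectively (the missing terms drop out since $\qqnum{0}=0$ and $\uu{n}^{n+1}=0$), while for each $k\in\{1,\ldots,n\}$ the pair $\uu{n+1}^k,\ww{n-1}^{k-1}$ is produced from $u^0\dott\uu{n}^k,u^1\dott\uu{n}^{k-1}$ through the block
\[
\frac{1}{\qqnum{n+1}^{\half}}\begin{pmatrix}
q^k\qqnum{n+1-k}^{\half} & \qqnum{k}^{\half} \\[3pt]
\qqnum{k}^{\half} & -q^k\qqnum{n+1-k}^{\half}
\end{pmatrix}.
\]
Since every column of $U$ is used exactly once, $O$ is block diagonal (after reindexing) with two $1\times 1$ unit blocks and these $2\times 2$ blocks. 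I would then check that each $2\times 2$ block is orthogonal: the sum of squares along each row equals $q^{2k}\qqnum{n+1-k}+\qqnum{k}=\qqnum{n+1}$ by the identity~\eqref{pro-qqn}, and the two rows are mutually orthogonal, so after the factor $\qqnum{n+1}^{-\half}$ the block is orthogonal. Hence $O$ is a real orthogonal matrix and $O\,O^*=1_{2(n+1)}$.

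Finally, I would conclude
\[
\pp{n+1}+\qq{n-1}=\widetilde{U}\,\widetilde{U}^*=U\,O\,O^*\,U^*=U\,U^*=P ,
\]
so that the two projections are literally equal and the associated right $\mathcal{O}(S^4_q)$-modules coincide, in particular they are isomorphic, as claimed. The only genuine obstacle is the orthogonality of the $2\times 2$ blocks, which reduces to the single $q$-number identity~\eqref{pro-qqn}; everything else is careful bookkeeping of the recursive definitions of $\uu{n+1}$ and $\ww{n-1}$ and of the column labelling inherited from~\eqref{eq:UV}.
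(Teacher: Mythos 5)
Your proof is correct and follows essentially the same route as the paper's: both rest on rewriting the columns $u^{k_1}\dott\uu{n}^{k_2}$ of the trivializing matrix for $\mathscr{E}_1\otimes_{\mathcal{O}(S^4_q)}\mathscr{E}_n$ in terms of the columns of $\uu{n+1}$ and $\ww{n-1}$ via the recursions \eqref{Un-cl} and \eqref{Wn-cl}, together with the identity \eqref{pro-qqn}. Your repackaging of the computation as a scalar orthogonal change of basis $O$ is just a tidier way of expressing what the paper does by direct expansion of $\uu{n,1}(\uu{n,1})^*$, namely the cancellation of the mixed terms $\uu{n+1}(\ww{n-1})^*$ (the orthogonality of the rows of your $2\times 2$ blocks) and the recombination of the diagonal terms using \eqref{pro-qqn} (the normalization of those rows).
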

\begin{proof}
We have to show that the projection $\pp{n,1}$ for the module $\mathscr{E}_1\otimes_{\mathcal{O}(S^4_q)}\mathscr{E}_n$ given as in \eqref{eq:capitalP} is equal to $(\pp{n+1}+\qq{n-1})$ as matrix. From 
\eqref{Un-cl} and \eqref{Wn-cl} we obtain (in a compact form) the partial isometries 
$\uu{n,1} = (u^0 \dott \uu{n}, u^1 \dott \uu{n})$ of $\pp{n,1}$ as in Proposition~\ref{prop:UV}:
\begin{align*}
u^0 \dott \uu{n}^k &= \qqnum{n+1}^{-\half} \big( q^k \qqnum{n+1-k}^{\half} \uu{n+1}^k + 
\qqnum{k}^{\half} \ww{n-1}^{k-1} \big) \\
 u^1 \dott \uu{n}^k &=\qqnum{n+1}^{-\half}   \big( \qqnum{k+1}^{\half} \uu{n+1}^{k+1} - 
q^{k+1} \qqnum{n-k}^{\half} \ww{n-1}^k \big) .
\end{align*}
We then compute
\begin{align*}
\pp{n,1} & = \uu{n,1} (\uu{n,1})^* \\
& = \sum_{k=0}^n \big(u^0 \dott \uu{n}^k (\uu{n}^k)^* \dott (u^0)^* + 
u^1 \dott \uu{n}^k (\uu{n}^k)^* \dott (u^1)^* \big) \\
& = \qqnum{n+1}^{-1} \, \sum_{k=0}^n \big(q^{2k} \qqnum{n+1-k} \uu{n+1}^k (\uu{n+1}^k)^* + 
\qqnum{k+1} \uu{n+1}^{k+1} (\uu{n+1}^{k+1})^* \big) \\ 
&  
+ \qqnum{n+1}^{-1} \, \sum_{k=0}^n \big( \qqnum{k} \ww{n-1}^{k-1} (\ww{n-1}^{k-1})^* + 
q^{2(k+1)} \qqnum{n-k} \ww{n-1}^k  (\ww{n-1}^k)^* \big) \\
\intertext{(since the mixed terms $\uu{n+1} (\ww{n-1})^*$ and $\ww{n-1} (\uu{n+1})^*$ cancel)}
&= \qqnum{n+1}^{-1} \, \sum_{k=0}^n \Big(q^{2k} \qqnum{n+1-k} + \qqnum{k} \Big) \uu{n+1}^k (\uu{n+1}^k)^* \\ 
&  
+ \qqnum{n+1}^{-1} \, \sum_{k=0}^n \Big( \qqnum{k+1} + q^{2(k+1)} \qqnum{n-k}  \Big) \ww{n-1}^k  (\ww{n-1}^k)^* \\
&=\pp{n+1}+\qq{n-1}
\end{align*}
using for the last equality the identity in \eqref{pro-qqn}.
\end{proof}

As a corollary, using the isomorphisms
$\pp{n+1}\mathcal{O}(S^4_q)^{4^{n+1}}\cong \mathscr{E}_{n+1}$,
$\qq{n-1}\mathcal{O}(S^4_q)^{4^{n+1}}\cong \mathscr{E}_{n-1}$
and
$(\pp{n+1}+\qq{n-1})\mathcal{O}(S^4_q)^{4^{n+1}}\cong \mathscr{E}_1\otimes_{\mathcal{O}(S^4_q)}\mathscr{E}_n$,
we obtain an independent proof of the recursive formula
\[
\mathrm{ch}_1([\mathscr{E}_{n+1}]) = \mathrm{ch}_1([\mathscr{E}_1\otimes_{\mathcal{O}(S^4_q)}\mathscr{E}_n]) -\mathrm{ch}_1([\mathscr{E}_{n-1}]) ,
\]
which using \eqref{eq:provaB} gives for the Chern number
\[
\mathrm{ch}_1([\mathscr{E}_n]) = \frac{1}{6} n (n+1) (n+2) \, \mathrm{ch}_1([\mathscr{E}_1]) ,
\]
consistently with Proposition~\ref{prop:55}.

\begin{center}
\textsc{Acknowledgements}
\end{center}

\noindent
FD, GL and CP are members of INdAM-GNSAGA.
FD and CP acknowledges support by the University of Naples Federico II under the grant FRA 2022 \emph{GALAQ: Geometric and ALgebraic Aspects of Quantization}.
GL acknowledges support from PNRR MUR projects PE0000023-NQSTI.

\appendix
\section{$\Z$-modules}\label{sec:app}
To see what theorems carry over from vector spaces to free $\Z$-modules, one can see e.g.~Chap.~12 in Artin's book \textit{Algebra} \cite{Art91}. In particular: any basis of a rank $r$ free module has cardinality $r$; a set of elements of $M$ is a basis if and only if the components of the elements in any other basis are the columns of an invertible matrix (the change of basis matrix); if $M$ is free with rank $r$ its dual $M^\vee:=\mathrm{Hom}_{\Z}(M,\Z)$ is also free with the same rank $r$; and there exists a pair of dual basis of $M$ and $M^\vee$.

\begin{lemma}\label{lemma:1}
Let $M$ be a free $\Z$-module of rank $r$, $p_1,\ldots,p_r\in M$, $\varphi^1,\ldots,\varphi^r\in M^\vee$ and let
$a=(a_i^j)$ be the matrix with entries $a_i^j:=\varphi^j(p_i)$. Then, the following two conditions are equivalent
\begin{enumerate}
\item\label{en:1} $(p_1,\ldots,p_r)$ is a basis of $M$ and $(\varphi^1,\ldots,\varphi^r)$ is a basis of $M^\vee$;

\item\label{en:2} $a\in GL_r(\Z)$ (i.e.~$\det a=\pm 1$).

\end{enumerate}
\end{lemma}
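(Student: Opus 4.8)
The plan is to reduce everything to the multiplicativity of the determinant, once a reference basis has been fixed. First I would choose a basis $(e_1,\ldots,e_r)$ of $M$ together with its dual basis $(e^1,\ldots,e^r)$ of $M^\vee$ (such a pair exists by the facts recalled at the start of this appendix). Writing the given elements in these coordinates, I introduce two integer $r\times r$ matrices $P=(P_i^k)$ and $Q=(Q^j_l)$ determined by $p_i=\sum_k P_i^k e_k$ and $\varphi^j=\sum_l Q^j_l e^l$.

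Next I would express $a$ in terms of $P$ and $Q$. Using $e^l(e_k)=\delta^l_k$ one finds
\[
a_i^j=\varphi^j(p_i)=\sum_k P_i^k Q^j_k ,
\]
which in the paper's matrix convention $(ab)_i^j=\sum_k a_i^k b_k^j$ reads $a=P\,Q^{\mathsf{T}}$. Hence, by multiplicativity and transpose-invariance of the determinant, $\det a=\det P\cdot\det Q$.

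The heart of the argument is then the dictionary between bases and invertible coordinate matrices. By the facts recalled at the start of this appendix, $(p_1,\ldots,p_r)$ is a basis of $M$ if and only if $P\in GL_r(\Z)$, that is $\det P=\pm 1$, and likewise $(\varphi^1,\ldots,\varphi^r)$ is a basis of $M^\vee$ if and only if $\det Q=\pm 1$. Thus condition (i) amounts exactly to ``$\det P=\pm 1$ and $\det Q=\pm 1$'', while condition (ii) amounts to ``$\det P\cdot\det Q=\pm 1$'', and the implication (i)$\Rightarrow$(ii) is immediate.

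For the converse (ii)$\Rightarrow$(i), the key point — and the only place where we genuinely use that the ground ring is $\Z$ rather than a field — is that the only units of $\Z$ are $\pm 1$: since $\det P$ and $\det Q$ are integers whose product is $\pm 1$, each of them must separately equal $\pm 1$. I do not expect any real obstacle in this lemma; the only care needed is the index bookkeeping identifying $a$ with $P\,Q^{\mathsf{T}}$, and the reminder that over $\Z$ a family is a basis precisely when its coordinate matrix is invertible over $\Z$, not merely when its determinant is nonzero.
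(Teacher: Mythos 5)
Your proposal is correct and follows essentially the same route as the paper's proof: fix a pair of dual bases, write the coordinate matrices of the $p_i$ and $\varphi^j$, observe that $a$ is their product (up to a transpose, which is only a notational difference in index placement), and conclude via multiplicativity of the determinant together with the fact that the units of $\Z$ are $\pm 1$. Your explicit remark on the (ii)$\Rightarrow$(i) direction spells out a step the paper leaves implicit, but there is no substantive difference.
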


\begin{proof}
Let $(e_1,\ldots,e_r)$ and $(e^1,\ldots,e^r)$ be a pair of dual basis of $M$ and $M^\vee$. Write
$p_i=\sum_{k=1}^r b_i^ke_k$ and $\varphi^j=\sum_{l=1}^r c_l^je^l$ where $b=(b_i^k)$ and $c=(c_l^j)$ are integer matrices. Condition \ref{en:1} is equivalent to the condition that both $b$ and $c$ belong to $GL_r(\Z)$.

Note that
\[
a_i^j=\sum_{k,l=1}^rc_l^je^l(b_i^ke_k)=\sum_{k=1}^rc_k^jb_i^k ,
\]
that is $a=bc$. Since $\det a=(\det b)(\det c)$, clearly $a\in GL_r(\Z)$ (i.e.~\ref{en:2} hold) if and only if both $b$ and $c$
belong to $GL_r(\Z)$ (i.e.~\ref{en:1} hold).
\end{proof}

We are interested in the $\Z$-module $K_*(A)$, where $A$ is a unital C*-algebra.
Assume that $A$ is in the UCT class (see \cite{BBWW20} for the state of the art about UCT)
and that $K_*(A)$ is torsion-free.
Using \cite[Thm.~23.1.1]{Bla86} with $B=\C$, since $K_*(A)$ is free, we get a degree $0$ isomorphism
\[
KK^*(A,\C)\cong\mathrm{Hom}_{\Z}(K_*(A),K_*(\C)) .
\]
The left hand side is $K^*(A)$, and since $K_*(\C)=K_0(\C)=\Z$ we get
\[
K^*(A)\cong K_*(A)^\vee .
\]
 
\section{Proof of Proposition~\protect{\ref{prop:sfereclassiche}}}\label{AppB}

That the matrix $p$ in \eqref{eq:projs2n} is a projection follows from the identities
$\gamma_i^*=\gamma_i$, $\gamma_i^2=1$ and $\gamma_i\gamma_j=-\gamma_j\gamma_i$ for all $i\neq j$,
and from $\sum_{i=1}^{2n+1}(x_i)^2=1$ on $S^{2n}$.
Since the $\gamma$-matrices are traceless, $\tr(p)$ is half the trace of the $2^n\times 2^n$ identity matrix.
Thus, the rank of $E$ is $\mathrm{ch}_0(E)=2^{n-1}$.
The top Chern character, the only other not vanishing character, is
\[
\mathrm{ch}_n(E)=\frac{1}{n!}\tr\left(\frac{\sqrt{-1}}{2\pi}\,\Omega\right)^{\hspace*{-4pt}n} .
\]
where
\[
\Omega=p(dp\wedge dp)p
\]
is the curvature of the Grassmannian connection defined by $p$. Here $\wedge$ is the exterior product of forms composed with the matrix product. Explicitly,
\[
\Omega_i^j=\sum_{k,l,m}p_i^k(dp_k^l\wedge dp_l^m)p_m^j ,
\]
and in particular $dp\wedge dp$ is not zero (despite the wedge product being graded-symmetric).
From $p^2=p$ and the Leibniz rule we get $(dp)p=d(p^2)-pdp=(1-p)dp$, so that $p$ commutes with $dp\wedge dp$ and then
\[
\mathrm{ch}_n(E)=\frac{1}{n! }\left(\frac{\sqrt{-1}}{2\pi}\right)^{\hspace*{-4pt}n}\tr\big(p(dp)^{2n}\big) .
\]
Using Stoke's theorem we then find
\[
\int_{S^{2n}}\mathrm{ch}_n(E)=
\int_{B^{2n+1}}d\big(\mathrm{ch}_n(E)\big)=
\frac{1}{n! }\left(\frac{\sqrt{-1}}{2\pi}\right)^{\hspace*{-4pt}n}\int_{B^{2n+1}}\tr\big((dp)^{2n+1}\big) ,
\]
where the integral on the right hand side is on the unit ball in $\R^{2n+1}$ and we extended $p$ to a function on $\R^{2n+1}$ in the obvious way, by changing the domain of the Cartesian coordinates from the sphere to the whole $\R^{2n+1}$.

We now compute the trace:
\[
\tr\big((dp)^{2n+1}\big)=\frac{1}{2^{2n+1}}\sum_{i_1,\ldots,i_{2n+1}}\tr\big(
\gamma_{i_1}\gamma_{i_2}\cdots\gamma_{i_{2n+1}}
\big)dx_{i_1}\wedge dx_{i_2}\wedge\ldots\wedge dx_{i_{2n+1}} .
\]
In the product of 1-forms, only terms with all the indices different are non-zero, i.e.~with $(i_1,\ldots,i_{2n+1})$ a permutation of $(1,\ldots,2n+1)$.
Since both gamma matrices and 1-forms anticommute, the summand is invariant under permutation of the indices and
\begin{equation}\label{eq:t}
\tr\big((dp)^{2n+1}\big)=\frac{(2n+1)!}{2^{2n+1}}\tr\big(
\gamma_{1}\gamma_{2}\cdots\gamma_{2n+1}
\big)dx_{1}\wedge dx_{2}\wedge\ldots\wedge dx_{2n+1} .
\end{equation}
Note that
\[
\frac{(2n+1)!}{2^{2n+1}n!}=(n+\tfrac{1}{2})(n-\tfrac{1}{2})(n-\tfrac{3}{2})\cdots\tfrac{1}{2}=\frac{\Gamma(n+\tfrac{3}{2})}{\sqrt{\pi}}=\frac{\pi^n}{\mathrm{Vol}(B^{2n+1})} ,
\]
where $\mathrm{Vol}(B^{2n+1})$ is the volume of the $(2n+1)$-dimensional unit ball.

The trace in \eqref{eq:t} is easily computed. Firstly,
\[
\gamma_{2i+1}\gamma_{2i+2}=1^{\otimes i}\otimes\sigma_1\sigma_2\otimes 1^{\otimes n-i-1}
=\sqrt{-1}\,(1^{\otimes i}\otimes\sigma_3\otimes 1^{\otimes n-i-1})
\]
for all $0\leq i\leq n-1$, so that
\[
\gamma_{1}\gamma_{2}\cdots\gamma_{2n}=\big(\sqrt{-1}\big)^n\gamma_{2n+1}.
\]
Using $\tr(\gamma_{2n+1}^2)=\tr(1_N)=N=2^n$, we find
\[
\tr\big((dp)^{2n+1}\big)=\frac{(2\pi)^n n!}{\mathrm{Vol}(B^{2n+1})}\big(\sqrt{-1}\big)^n dx_{1}\wedge dx_{2}\wedge\ldots\wedge dx_{2n+1} .
\]
Thus,
\[
\int_{S^{2n}}\mathrm{ch}_n(E)=\frac{(-1)^n}{\mathrm{Vol}(B^{2n+1})}\int_{B^{2n+1}}dx_{1}\wedge dx_{2}\wedge\ldots\wedge dx_{2n+1}=(-1)^n.
\]
The last claim of Proposition \ref{prop:sfereclassiche} follows from Proposition \ref{prop:next}.

\end{document}